\theoremstyle{plain}
\newtheorem{thm}{Theorem}[section]
\newaliascnt{cor}{thm}
\newaliascnt{prop}{thm}
\newaliascnt{lem}{thm}
\newtheorem{cor}[cor]{Corollary}
\newtheorem{prop}[prop]{Proposition}
\newtheorem{lem}[lem]{Lemma}
\theoremstyle{definition}
\newaliascnt{defn}{thm}
\newaliascnt{asu}{thm}
\newaliascnt{con}{thm}
\newtheorem{defn}[defn]{Definition}
\newtheorem{asu}[asu]{Assumption}
\newcounter{stp}
\newcounter{stpi}
\newcounter{stpci}
\newcounter{stpiii}
\theoremstyle{remark}
\newaliascnt{rem}{thm}
\newaliascnt{exa}{thm}
\newaliascnt{masu}{thm}
\newaliascnt{nota}{thm}
\newaliascnt{sett}{thm}
\newtheorem{rem}[rem]{Remark}
\numberwithin{equation}{section}
\setlist[enumerate]{font = \normalfont}
\newcommand{\F}{\mathbb{F}}
\newcommand{\E}{\mathbb{E}}
\newcommand{\R}{\mathbb{R}}
\newcommand{\C}{\mathbb{C}}
\newcommand{\bP}{\mathbb{P}}
\newcommand{\D}{\mathbb{D}}
\newcommand{\bX}{\mathbb{X}}
\newcommand{\rW}{\mathrm{W}} 
\newcommand{\rL}{\mathrm{L}}
\newcommand{\rH}{\mathrm{H}}
\newcommand{\rT}{\mathrm{T}}
\newcommand{\rC}{\mathrm{C}}
\newcommand{\rB}{\mathrm{B}}
\newcommand{\rD}{\mathrm{D}}
\newcommand{\rR}{\mathrm{R}}
\newcommand{\rN}{\mathrm{N}}
\newcommand{\rX}{\mathrm{X}}
\newcommand{\rY}{\mathrm{Y}}
\newcommand{\rI}{\mathrm{I}}
\newcommand{\rG}{\mathrm{G}}
\newcommand{\DeltaD}{\Delta_{\mathrm{D}}}
\newcommand{\rd}{\,\mathrm{d}}
\newcommand{\fs}{\mathrm{fs}}
\newcommand{\mre}{\mathrm{e}}
\newcommand{\loc}{\mathrm{loc}}
\newcommand{\mS}{m_\mathcal{S}}
\newcommand{\Afs}{\cA_\fs}
\newcommand{\cS}{\mathcal{S}}
\newcommand{\cF}{\mathcal{F}}
\newcommand{\cO}{\mathcal{O}}
\newcommand{\cB}{\mathcal{B}}
\newcommand{\cL}{\mathcal{L}}
\newcommand{\cR}{\mathcal{R}}
\newcommand{\cH}{\mathcal{H}}
\newcommand{\cA}{\mathcal{A}}
\newcommand{\tcA}{\Tilde{\mathcal{A}}}
\newcommand{\cP}{\mathcal{P}}
\newcommand{\cC}{\mathcal{C}}
\newcommand{\Hinfty}{{\cH^\infty}}
\newcommand{\cSMR}{\mathcal S \mathcal M \mathcal R}
\newcommand{\frB}{\mathfrak{B}}
\newcommand{\frF}{\mathfrak{F}}
\newcommand{\frP}{\mathfrak{P}}
\newcommand{\tu}{\Tilde{u}}
\newcommand{\tQ}{\Tilde{Q}}
\newcommand{\tp}{\Tilde{p}}
\newcommand{\bfone}{\boldsymbol{1}}
\newcommand{\eps}{\varepsilon}
\newcommand{\del}{\partial}
\newcommand{\llb}{\llbracket}
\newcommand{\rrb}{\rrbracket}
\DeclareMathOperator{\tr}{tr}
\DeclareMathOperator{\Id}{Id}
\DeclareMathOperator{\mdiv}{div}
\newcommand{\tin}{\enspace \text{in} \enspace}
\newcommand{\ton}{\enspace \text{on} \enspace}
\newcommand{\tfor}{\enspace \text{for} \enspace}
\newcommand{\tforall}{\enspace \text{for all} \enspace}
\newcommand{\tand}{\enspace \text{and} \enspace}
\newcommand{\tso}{\enspace \text{so} \enspace}
\newcommand{\tif}{\enspace \text{if} \enspace}
\newcommand{\twhere}{\enspace \text{where} \enspace}
\newcommand{\twith}{\enspace \text{with} \enspace}
\begin{document}

\title[Analysis of Stochastic Fluid-Rigid Body Dynamics via Stochastic Maximal Regularity]{Strong Well-posedness for a Stochastic Fluid-Rigid Body System via Stochastic Maximal Regularity}

\author{Felix Brandt}
\address{Department of Mathematics, University of California at Berkeley, Berkeley, 94720, CA, USA.}
\email{fbrandt@berkeley.edu}
\author{Arnab Roy}
\address{Basque Center for Applied Mathematics (BCAM), Alameda de Mazarredo 14, 48009 Bilbao, Spain.}
	\address{IKERBASQUE, Basque Foundation for Science, Plaza Euskadi 5, 48009 Bilbao, Bizkaia, Spain.}
\email{aroy@bcamath.org}

\subjclass[2020]{35Q35, 74F10, 60H15, 35R60}
\keywords{Stochastic fluid-rigid body interaction, stochastic maximal regularity, bounded $\Hinfty$-calculus, fluid-structure operator, strong well-posedness, blow-up criteria} 
\begin{abstract}
We develop a rigorous analytical framework for a coupled stochastic fluid-rigid body system in \(\mathbb{R}^3\). 
The model describes the motion of a rigid ball immersed in an incompressible Newtonian fluid subjected to both additive noise in the fluid and body equations and transport-type noise in the fluid equation. 
We establish local strong well-posedness of the resulting system by combining stochastic maximal \(\rL^p\)-regularity theory with a decoupling approach for the associated fluid-structure operator.
A key step is to prove the boundedness of the \(\mathcal{H}^\infty\)-calculus for this operator. 
In addition, we provide blow-up criteria for the maximal existence time of solutions.
To our knowledge, this is the first rigorous treatment of strong solutions of stochastic fluid-structure interactions.
\end{abstract}

\maketitle

\section{Introduction}

In this paper, we analyze a stochastic fluid-rigid body interaction problem of a viscous, incompressible Newtonian fluid with a rigid ball in 3D.
First, let us describe the underlying deterministic problem.
For a time $t \in [0,T]$, where $T > 0$, denote by $h(t)$ the position of the center of mass of the rigid ball centered around the origin at time $t = 0$, by $\cS(t)$ its domain, and by $\cF(t) = \R^3 \setminus \cS(t)$ the domain occupied by the fluid.
For the fluid velocity $u \colon [0,T] \times \cF(t) \to \R^3$ and pressure $p \colon [0,T] \times \cF(t) \to \R$, the linear and angular velocity $\ell \colon [0,T] \to \R^3$ and $\omega \colon [0,T] \to \R^3$, i.e., $h'(t) = \ell(t)$, the mass of the ball $\mS$, the time-independent inertia tensor $J_0$, a viscosity parameter $\nu > 0$, the unit vector field $n$ normal to $\del \cS(t)$ and directed towards the interior of $\cS(t)$ as well as the Cauchy stress tensor $\rT(u,p)$ that takes the shape
\begin{equation}\label{eq:Cauchy stress tensor}
    \rT(u,p) = 2 \nu(\nabla u + \nabla u^\top) - p \Id_3,
\end{equation}
the deterministic interaction problem is given by
\begin{equation}\label{eq:fsi problem with ball moving dom}
    \left\{
    \begin{aligned}
        \del_t u + (u \cdot \nabla)u - \nu \Delta u + \nabla p
        &= 0, \enspace \mdiv u = 0, &&\tfor t > 0, \enspace y \in \cF(t),\\
        \mS \ell'(t) + \int_{\del \cS(t)} \rT(u,p) n \rd \Gamma
        &= 0, &&\tfor t > 0,\\
        J_0 \omega'(t) + \int_{\del \cS(t)} (y - h(t)) \times \rT(u,p) n \rd \Gamma
        &= 0, &&\tfor t > 0,\\
        u(t,y)
        &= \ell(t) + \omega(t) \times (y-h(t)), &&\tfor t > 0, \enspace y \in \del \cS(t),\\
        u(0) = u_0, \enspace \ell(0)
        &= \ell_0, \enspace \omega(0) = \omega_0.
    \end{aligned}
    \right.
\end{equation}
As it does not affect the analysis, we will assume $\nu \equiv 1$ in the remainder of this paper.
Thanks to the simple geometry of the ball, the transformation of \eqref{eq:fsi problem with ball moving dom} to a fixed domain is straightforward.
Indeed, invoking the change of variables $x \mapsto y(t,x) \coloneqq x + h(t)$, we set
\begin{equation}\label{eq:change of var}
    v_0(x) \coloneqq u_0(x), \enspace v(t,x) \coloneqq u(t,x+h(t)) \tand \pi(t,x) \coloneqq p(t,x+h(t)).
\end{equation}
This transformation leads to the introduction of the term $-(\ell \cdot \nabla)v$ in the fluid equation and allows to reformulate \eqref{eq:fsi problem with ball moving dom} on the fixed domain denoted by $\cF_0 \coloneqq \cF(0)$.
Moreover, we set $\cS_0 \coloneqq \cS(0)$ and accordingly $\del \cS_0 \coloneqq \del \cS(0)$.
Thus, we get
\begin{equation}\label{eq:fsi problem with ball fixed dom}
    \left\{
    \begin{aligned}
        \del_t v + ((v-\ell) \cdot \nabla)v - \Delta v + \nabla \pi
        &= 0, \enspace \mdiv v = 0, &&\tfor t > 0, \enspace x \in \cF_0,\\
        \mS \ell'(t) + \int_{\del \cS_0} \rT(v,\pi) n \rd \Gamma
        &= 0, &&\tfor t > 0,\\
        J_0 \omega'(t) + \int_{\del \cS_0} x \times \rT(v,\pi) n \rd \Gamma
        &= 0, &&\tfor t > 0,\\
        v(t,y)
        &= \ell(t) + \omega(t) \times x, &&\tfor t > 0, \enspace x \in \del \cS_0,\\
        v(0) = v_0, \enspace \ell(0)
        &= \ell_0, \enspace \omega(0) = \omega_0.
    \end{aligned}
    \right.
\end{equation}
The investigation of {\em deterministic} fluid-rigid body interaction problems is a classical topic in mathematical fluid mechanics and has been studied abundantly.
At this stage, we refer to the pioneering work of Serre \cite{Ser:87}, the articles \cite{CSMT:00, DE:99} in the weak setting or the papers \cite{EMT:23, Galdi:02, GS:02, GGH:13, MT:18, Takahashi:03, TT:04, WX:11} in the strong setting.
For an overview of fluid-flow structure interaction problems, see also \cite{KKLTTW:18}.

The investigation of {\em stochastic} fluid-structure interaction (FSI) problems, where the structure is assumed to be elastic and located at a part of the fluid boundary, has only started recently.
The study of stochastic FSI problems is motivated by real world problems including the modeling of the blood flow in arteries.
These problems are naturally subject to noise, and well-posedness results of the corresponding stochastic interaction problems provide confidence that the models are robust under the addition of noise.
Kuan and \v{C}ani\'c \cite{KC:22} pioneered the investigation of stochastic FSI problems by analyzing a reduced model of a linearly coupled FSI problem consisting of a stochastic viscous wave equation.
The same authors considered in \cite{KC:24} the 2D/1D interaction problem of a Stokes fluid and a stochastically forced linearly elastic membrane modeled by a linear wave equation along a fixed interface, i.e., in the linearly coupled case.
These considerations were extended by Tawri and \v{C}ani\'c \cite{TC:25} to 2D Navier-Stokes equations nonlinearly coupled to an elastic lateral wall, where the fluid and the membrane equation are subject to stochastic forces.
Tawri investigated the case of non-zero longitudinal displacement \cite{Taw:25} and Navier slip boundary conditions \cite{Taw:24}.
Breit et al.\ \cite{BMM:24} studied the interaction of a 3D incompressible Navier-Stokes with a linearly elastic Koiter type shell subject to transport noise.
Let us emphasize that the aforementioned results all concern solutions in the {\em weak} PDE sense.

In this paper, for the first time, we analyze a stochastic fluid-rigid body interaction problem.
One application of such problems concerns polymer dynamics, see also \cite{WGM:10}. Recent advances in the modeling of fluctuating hydrodynamics and stochastic fluid-structure interactions have enabled simulations of rigid particle suspensions and thermally-driven dynamics in fluids \cite{sprinkle2017large, atzberger2011stochastic}.
In contrast to the theoretical works mentioned in the former paragraph, we focus here on solutions in the {\em strong} PDE sense.
For a family of standard independent Brownian motions $(W_t^n)_{n \ge 1}$ on a filtered probability space $(\Omega,\cA, \F = (\frF_t)_{t \ge 0},\bP)$ with associated expectation~$\E[\cdot]$, $b = (b_n)_{n \ge 1}$ and $f = (f_n)_{n \ge 1} = ((f_v^n,f_\ell^n,f_\omega^n)^\top)_{n \ge 1}$ made precise in \autoref{sec:main results}, and for the {\em random variables} $v \colon [0,T] \times \Omega \times \cF_0 \to \R^3$, $\pi \colon [0,T] \times \Omega \times \cF_0 \to \R$, $\ell \colon [0,T] \times \Omega \to \R^3$, and $\omega \colon [0,T] \times \Omega \to \R^3$, we consider the stochastic fluid-rigid body interaction problem
\begin{equation}\label{eq:stoch forced fsi problem intro}
    \left\{
    \begin{aligned}
        \rd v - \Delta v \rd t + \nabla \pi \rd t
        &= -((v - \ell) \cdot \nabla)v \rd t + \sum_{n \ge 1} \bigl[(b_n \cdot \nabla)v + f_{v}^n\bigr] \rd W_t^n, 
        &&\tin \cF_0,\\
        \mdiv v
        &= 0, &&\tin \cF_0,\\
        \rd \mS \ell + \int_{\del \cS_0} \rT(v,\pi) n \rd \Gamma \rd t
        &= \sum_{n \ge 1} f_{\ell}^n \rd W_t^n,\\
        \rd J_0 \omega + \int_{\del \cS_0} x \times \rT(v,\pi) n \rd \Gamma
        &= \sum_{n \ge 1} f_{\omega}^n \rd W_t^n,\\
        v
        &= \ell + \omega \times x, &&\ton \del \cS_0,\\
        v(0) = v_0, \enspace \ell(0)
        &= \ell_0, \enspace \omega(0) = \omega_0, &&\tin \cF_0.
    \end{aligned}
    \right.
\end{equation}
Note that we consider additive noise in the fluid and body velocity equations, while we also take into account transport noise in the fluid equation.
Transport noise has a clear physical meaning in fluid mechanical transport processes, see \cite{CGH:17}, and its role in the Navier-Stokes equations has been discussed extensively in the monograph \cite{FL:23}.
The role of transport noise for turbulent models such as the Navier-Stokes equations, the surface quasi-geostrophic equations, and the primitive equations has also been explored in the recent article \cite{DP:26}.
Fluid dynamics modeled by stochastic flow with the turbulent term driven by white noise was studied in \cite{MR:04}.
For some ill-posed PDEs, transport noise has a regularizing effect \cite{FGP:10}.
We observe that, at least formally, it is also possible to derive the stochastic interaction problem \eqref{eq:stoch forced fsi problem intro} directly from \eqref{eq:fsi problem with ball moving dom} upon transforming to the fixed domain.

The aim of this paper is to prove that there exists a unique, local, strong solution $(v,\ell,\omega)$, which is a random variable in the present setting, to \eqref{eq:stoch forced fsi problem intro} when considering strongly $\frF_0$-measurable initial data in suitable Besov spaces with compatibility conditions.
Furthermore, we discuss instantaneous regularization of the solution, establish a sort of continuous dependence of the solution on the initial data and provide blow-up criteria.
For a precise statement of the main results, see \autoref{sec:main results}.

Our approach relies on the theory of so-called {\em stochastic maximal regularity}.
In the {\em deterministic} case, maximal regularity of an operator $A \colon \rX_1 \subset \rX_0 \to \rX_0$ on a Banach space $\rX_0$ amounts to establishing that the parabolic operator
\begin{equation*}
    \Bigl(\frac{\rd}{\rd t} + A,\tr\Bigr) \colon \mathcal{E}_1 \to \mathcal{E}_0 \times \rX_\gamma,
\end{equation*}
for the time trace $\tr$, is an isomorphism between suitable Banach spaces $\mathcal{E}_1$ and $\mathcal{E}_0 \times \rX_\gamma$.
Thus, it allows to tackle quasilinear evolution equations of the form
\begin{equation*}
    u'(t) + A(u(t))u(t) = F(u(t)), \tfor t > 0, \enspace u(0) = u_0,
\end{equation*}
with nonlinear terms $A(\cdot)$ and $F(\cdot)$, by means of a linearization and fixed-point procedure.
This method has successfully been applied to a wide array of problems in applied analysis.
For more details on the theory as well as applications, we also refer to the monographs \cite{Ama:95, Lun:95, DHP:03, PS:16}.

{\em Stochastic} maximal regularity provides an analogue for the study of stochastic evolution equations 
\begin{equation*}
    \rd u + A(u)u \rd t = F(u) \rd t + (B(u)u + G(u)) \rd W, \tfor t > 0, \enspace u(0) = u_0,
\end{equation*}
for a Brownian motion $W$ and certain $A$, $B$, $F$ and $G$.
These quasilinear stochastic evolution equations are typically linearized by suitable operators $A$ and $B$, and stochastic maximal regularity can again be interpreted as an isomorphism between Banach spaces.
Here it is important to note that strong solutions are generally in spaces with lower regularity in time due to the roughness of the noise.

Remarkably, van Neerven et al.~\cite{vNVW:12a} revealed a sufficient {\em deterministic} condition for stochastic maximal regularity in the case $B = 0$, namely, they showed that under specific assumptions on the underlying Banach space, which are made precise in \autoref{ssec:stoch max reg and semilin SPDEs}, stochastic maximal regularity can be achieved if the operator $A$ admits a bounded $\Hinfty$-calculus.
For further details on the latter concept, we also refer to \cite{DHP:03, KW:04}.
The result in \cite{vNVW:12a} was later on extended to the situation of non-zero $B$ by Portal and Veraar \cite{PV:19}.
In a series of articles, Agresti and Veraar \cite{AV:22a, AV:22b} developed frameworks to nonlinear stochastic evolution equations in so-called {\em critical spaces}.
Here criticality refers to the presence of a scaling invariance.
The introduction of time weights allowed the authors to lower the regularity of initial data, exploit parabolic regularization, and establish blow-up criteria.
The aforementioned framework has been applied to many problems lately, and we refer here to the recent survey of Agresti and Veraar \cite{AV:25} and the references therein.
For critical spaces in the deterministic case, see \cite{PSW:18} or also the survey \cite{Wil:23}.
For different approaches to SPDEs, we also refer to the monographs \cite{DPZ:14} and \cite{FL:23}, which especially addresses stochastic fluid equations, and the references therein.

In order to analyze \eqref{eq:stoch forced fsi problem intro}, we reformulate the problem in operator form based on the so-called {\em fluid-structure operator}. 
In light of the sufficient condition for stochastic maximal regularity \cite{vNVW:12a}, it is a key step in our analysis to establish the boundedness of the $\Hinfty$-calculus of this operator. Note that the boundedness of the $\Hinfty$-calculus is a stronger property, and it is generally harder to verify, since it exhibits a worse behavior under perturbations, see also \cite{McIY:90}.
Another difficulty arising in the investigation of the fluid-structure operator is that it has a {\em non-diagonal domain}, meaning that the fluid-rigid body coupling is incorporated into the domain of the operator.
To bypass this issue, we employ a decoupling approach, i.e., we use a similarity transform to reduce the study to an operator that has diagonal domain, but that is of a more complicated shape.
By investigating the weak Neumann problem associated with the pressure with less regular data, and by using a perturbation argument for the bounded $\Hinfty$-calculus, we then manage to show that the fluid-structure operator admits a bounded $\Hinfty$-calculus.
This paves the way for the stochastic maximal regularity and also reveals interesting functional analytic properties such as a characterization of the fractional power domains of the fluid-structure operator.

The proof of the local strong well-posedness of \eqref{eq:stoch forced fsi problem intro} is completed by nonlinear estimates, which, in conjunction with the stochastic maximal regularity established in the previous step, allow us to use the theory of semilinear stochastic evolution equations. 
The nonlinear estimates require a characterization of the interpolation spaces, which is also carried out in this paper. 

To summarize, this paper makes several novel contributions to the analysis of stochastic fluid-structure interaction problems. 
First, we formulate and rigorously study a coupled stochastic system describing the interaction between a rigid body and an incompressible Newtonian fluid in
$\R^3$, incorporating both additive and transport-type stochastic forcing. 
A central result of the paper is the boundedness of the $\Hinfty$-calculus for the associated fluid-structure operator in appropriate weighted function spaces. 
This significantly extends existing results on analyticity and maximal $\rL^p$-regularity for such systems, particularly in unbounded domains. 
Our analysis also includes a characterization of the relevant interpolation spaces and a blow-up criterion for the maximal existence time of solutions.
While in the Hilbert space case, our initial data regularity is consistent with known deterministic results, we also indicate how the theory of critical spaces permits a further relaxation of initial regularity in the case of a rigid ball (cf.~\autoref{rem:crit of init data}), analogous to the approach in \cite[Section~3]{PW:17}. 
To our knowledge, this work provides \textit{the first rigorous treatment of strong solutions for a fluid-rigid body system under stochastic forcing}.

We emphasize that our results extend and substantially strengthen several earlier findings on the deterministic fluid-structure operator. 
In the Hilbert space setting, Takahashi and Tucsnak \cite{Takahashi:03, TT:04} established analyticity of the associated semigroup. 
Wang and Xin \cite{WX:11} extended the analyticity of semigroup to the space $\rL^{\nicefrac{6}{5}}(\R^3) \cap \rL^q(\R^3)$ for $q \ge 2$. 
Further, $\rL^p$-theory for fluid-rigid body interaction problems of Newtonian and non-Newtonian fluids were obtained by Geissert et al. \cite{GGH:13}. 
In bounded domains, Maity and Tucsnak \cite{MT:18} employed $\cR$-boundedness techniques to establish maximal $\rL^p$-regularity and exponential stability of the fluid-structure operator, while recent work by Ervedoza et al.~\cite{EMT:23} addresses analyticity on subspaces of $\rL^q$ for general $q \in (1,\infty)$ in exterior domains. 
We point out that our results, notably, the boundedness of the $\Hinfty$-calculus of the fluid-structure operator (see \autoref{thm:bdd H00-calculus of fluid-structure op incompr Newtonian}, \autoref{cor:cons of H00-calc of fs op incompr Newtonian} and \autoref{thm:props fluid-structure op bdd dom}) not only generalize these findings but in fact imply them (up to bounded analytic semigroups or spectral shifts in exterior domains). 

This article is organized as follows.
In \autoref{sec:Hoo, SMR & bilin SPDEs}, we provide probabilistic and deterministic concepts such as the bounded $\Hinfty$-calculus, stochastic maximal regularity, and a framework tailored to bilinear SPDEs required throughout the paper.
\autoref{sec:main results} is dedicated to the presentation of the main results of this paper on the local strong well-posedness of the stochastic interaction problem~\eqref{eq:stoch forced fsi problem intro} as well as the blow-up criteria.
The purpose of \autoref{sec:bdd Hoo-calculus fluid-structure operator} is to establish the bounded $\Hinfty$-calculus of the fluid-structure operator and to deduce the stochastic maximal regularity from there.
In \autoref{sec:proof main results}, we prove the main results stated in \autoref{sec:main results} by characterizing the interpolation spaces and showing suitable nonlinear estimates.
We conclude the paper in \autoref{sec:concl rems & further discussion} by giving an overview of possible extensions of the results in the present paper.

\section{Bounded $\Hinfty$-calculus, stochastic maximal regularity and bilinear SPDEs}\label{sec:Hoo, SMR & bilin SPDEs}

In this section, we recall some facts about the bounded $\Hinfty$-calculus, discuss its relation with the concept of so-called {\em stochastic maximal $\rL^p$-regularity} and provide a framework for bilinear SPDEs, including local existence and uniqueness, local continuity as well as blow-up criteria.
\autoref{ssec:prelims} presents (mainly) probabilistic preliminaries, \autoref{ssec:Hinfty block op and perturbation} discusses the bounded $\Hinfty$-calculus for block operator matrices as well as perturbation theory, and in \autoref{ssec:stoch max reg and semilin SPDEs}, we invoke the concept of stochastic maximal regularity and provide a framework tailored to bilinear SPDEs.

\newpage

\subsection{Preliminaries}\label{ssec:prelims}
\

In this section, $\rX$ denotes a Banach space.
The notion of a UMD space, representing {\em unconditional martingale differences}, will be important.
Even though it is a probabilistic concept, it admits a characterization in terms of the boundedness of the Hilbert transform.
More precisely, a Banach space $\rX$ is a {\em UMD space} if and only if the Hilbert transform is bounded on $\rL^p(\R;\rX)$ for some $p \in (1,\infty)$.
Let us observe that many classical reflexive spaces such as Lebesgue spaces $\rL^p$, (fractional) Sobolev spaces $\rW^{s,p}$, Bessel potential spaces~$\rH^{s,p}$, and Besov spaces $\rB_{pq}^s$ enjoy this property for $p$, $q \in (1,\infty)$.

Another concept in this context is the notion of a Banach space of (Rademacher) type~$2$, see also \cite[Chapter~7]{HvNVW:17}.
Again, we observe that the aforementioned spaces possess this property provided the parameters $p$ and $q$ satisfy $p$, $q \ge 2$.
In particular, $\rL^p$-spaces with $p \in (1,\infty)$ have type $p \wedge 2$.

With regard to stochastic maximal regularity, some probabilistic notation is required.
For this purpose, denote by $(\Omega,\cA, \F = (\frF_t)_{t \ge 0},\bP)$ a filtered probability space with associated expectation $\E[\cdot]$.
Let us start by making precise the notion of a cylindrical Brownian motion.
For a separable Hilbert space~$\rH$, we call a bounded and linear operator $W_\rH \colon \rL^2(\R_+;\rH) \to \rL^2(\Omega)$ a {\em cylindrical Brownian motion} in $\rH$ provided the random variable $W_\rH(f)$ satisfies the following properties:
\begin{enumerate}[(a)]
    \item $W_\rH(f)$ is centered Gaussian for every $f \in \rL^2(\R_+;\rH)$,
    \item $W_\rH(f)$ is $\frF_t$-measurable for each $t \in \R_+$ and $f \in \rL^2(\R_+;\rH)$ with support in $[0,t]$,
    \item $W_\rH(f)$ is independent of $\frF_t$ for all $t \in \R_+$ and $f \in \rL^2(\R_+;\rH)$ with support in $[t,\infty]$, and
    \item $\E(W_\rH(f_1) W_\rH(f_2)) = (f_1,f_2)_{\rL^2(\R_+;\rH)}$ for all $f_1$, $f_2 \in \rL^2(\R_+;\rH)$.
\end{enumerate}
The process $(W_\rH(t) h)_{t \ge 0}$ defined by $W_\rH(t)h \coloneqq W_\rH(\bfone_{(0,t]} h)$, for $h \in \rH$, then is a Brownian motion.

Concerning stochastic integration, we now invoke {\em $\gamma$-radonifying operators}.
Often, the space on which the noise is modeled is $\ell^2$.
Here we take into account the case of a general Hilbert space~$\rH$.
Consider a sequence of independent standard normal random variables $(\gamma_i)_{i \ge 1}$ on a probability space $(\Omega,\bP)$, and let~$(h_i)_{i \ge 1}$ be an orthonormal basis for $\rH$.
A bounded linear operator $T \colon \rH \to \rX$ belongs to $\gamma(\rH,\rX)$ if the series $\sum_{i=1}^\infty \gamma_i T h_i$ converges in $\rL^2(\Omega;\rX)$, and the $\gamma$-radonifying norm is defined by
\begin{equation*}
    \| T \|_{\gamma(\rH,\rX)} \coloneqq \left\| \sum_{i=1}^\infty \gamma_i T h_i \right\|_{\rL^2(\Omega;\rX)}.
\end{equation*}
In specific situations, the space $\gamma(\rH,\rX)$ admits a precise characterization.
Indeed, if $\rX$ is a Hilbert space, then $\gamma(\rH,\rX)$ is the space of Hilbert-Schmidt operators $\cL_2(\rH,\rX)$, while if $\rX = \rL^q(S)$, $q \in [1,\infty)$, for a measure space $(S,\Sigma,\mu)$, then $\gamma(\rH,\rX) = \rL^q(S;\rH)$.
This generalizes to $\rX = \rH^{s,q}(S)$ for $s \in \R$ and~$q \in (1,\infty)$, i.e., $\gamma(\rH,\rX) = \rH^{s,q}(S;\rH)$ in this case.
We also refer to \cite[Chapter~9]{HvNVW:17} for more details.

We say that a process $\phi \colon [0,T] \times \Omega \to \rX$ is {\em strongly progressively measurable}, or also simply {\em progressively measurable} if for all $t \in [0,T]$, it holds that $\left. \phi \right|_{[0,T]}$ is strongly $\frB([0,t]) \otimes \frF_t$-measurable, where $\frB$ represents the Borel $\sigma$-algebra.
Moreover, we denote by $\rL_{\frP}^p((0,T) \times \Omega;\gamma(\rH,\rX))$ the progressive measurable subspace of $\rL^p((0,T) \times \Omega;\gamma(\rH,\rX))$.
If $\rX$ is a UMD Banach space with type $2$, for every $p \in [0,\infty)$, it can be shown that the map $G \mapsto \int_0^{\cdot} G \rd W_\rH$ extends to a continuous linear operator from $\rL_{\frP}^p((0,T) \times \Omega;\gamma(\rH,\rX))$ into $\rL^p(\Omega;\rC([0,T];\rX))$, see \cite[Theorem~4.7]{vNVW:15a}.
For $p \in (0,\infty)$, there exists a constant $C = C(p,\rX)$ such that
\begin{equation*}
    \E \sup_{0 \le t \le T} \left\| \int_0^t G(s) \rd W_\rH(s) \right\|_\rX^p \le C \cdot \E \| G \|_{\rL^2(0,T;\gamma(\rH,\rX))}^p, \tforall G \in \rL_{\frP}^p((0,T) \times \Omega;\gamma(\rH,\rX)).
\end{equation*}
If $\rH$ is separable with orthonormal basis $(h_i)_{i \ge 1}$, then for all $p \in [0,\infty)$ and $G \in \rL_{\frP}^p((0,T) \times \Omega;\gamma(\rH,\rX))$, we have the series representation
\begin{equation*}
    \int_0^{\cdot} G(s) \rd W_\rH(s) = \sum_{i \ge 1} \int_0^{\cdot} G(s) h_i \rd W_\rH(s) h_i,
\end{equation*}
with convergence in $\rL^p(\Omega;\rC([0,T];\rX))$.

For the upcoming theory, we briefly recall the concept of stopping times.
A {\em stopping time} is a measurable map $\tau \colon \Omega \to [0,T]$ so that $\{\tau \le t\} \in \frF_t$ is valid for all $t \in [0,T]$.
We then use $\llb 0,\sigma \rrb$ to denote the stochastic interval $\llb 0,\sigma \rrb \coloneqq \{(t,\omega) \in [0,T] \times \Omega : 0 \le t \le \sigma(\omega)\}$.
Given $A \subset \Omega$ and stopping times $\tau$ and~$\mu$ such that $\tau \le \mu$, we define the set $[\tau,\mu] \times A \subset [0,T] \times \Omega$ by
\begin{equation*}
    [\tau,\mu] \times A \coloneqq \{(t,\omega) \in [0,T] \times A : \tau(\omega) \le t \le \mu(\omega)\}.
\end{equation*}
For $A \in \cA$, a map $u \colon A \times [0,\mu] \to \rX$ is referred to as {\em strongly progressively measurable} if the process
\begin{equation*}
    \bfone_{A \times [0,\mu]} u \coloneqq 
    \left\{
    \begin{aligned}
        u, &\ton A \times [0,\mu],\\
        0, &\enspace \text{else},
    \end{aligned}
    \right.
\end{equation*}
is strongly progressively measurable.
In the context of stopping times, we define the $\sigma$-algebra of the $\tau$-past by $\frF_\tau \coloneqq \bigl\{A \in \cA : \{\tau \le t\} \cap A \in \frF_t \tforall t \in [0,T]\bigr\}$.
Measurability questions in this context have been addressed in \cite[Lemma~2.15]{AV:22a}.
This paves the way for the following definition.

\begin{defn}
Let $T > 0$, $p \in \{0\} \cup [1,\infty)$, and consider a stopping time $\tau \colon \Omega \to [0,T]$ and a family of function spaces $(\rY_t)_{t \in [0,T]}$ such that for each $f \in \rY_T$ and every $t \in [0,T]$, it holds that $\left. f \right|_{[0,t]} \in \rY_t$ and $t \mapsto \| \left. f\right|_{[0,t]} \|_{\rY_t}$ is increasing.
By $u \in \rL_{\frP}^p(\Omega;\rY_\tau)$, we mean that there is a strongly progressively measurable $\Tilde{u} \in \rL^p(\Omega;\rY_T)$ with $\left.\Tilde{u}\right|_{\llb 0,\tau \rrb} = u$, and if $p \in [1,\infty)$, we define
\begin{equation*}
    \| u \|_{\rL^p(\Omega;\rY_\tau)} \coloneqq \E\left(\| \left.\Tilde{u}\right|_{[0,\tau]} \|_{\rY_\tau}^p\right).
\end{equation*}
\end{defn}

We conclude this subsection by recalling the concept of time weights.
For this purpose, consider $p \in (1,\infty)$ and $\kappa \in (-1,p-1)$.
We define $w_\kappa(t) \coloneqq |t|^\kappa$ for $t \in \R$.
By $\rL^p(I,w_\kappa;\rX)$, we then denote the space of strongly measurable functions $f \colon I \to \rX$ such that
\begin{equation*}
    \| f \|_{\rL^p(I,w_\kappa;\rX)}^p \coloneqq \int_I \| f(t) \|_{\rX}^p w_\kappa(t) \rd t < \infty.
\end{equation*}
In the unweighted case $\kappa = 0$, i.e., $w_\kappa = 1$, we also write $\rL^p(I;\rX)$ instead of $\rL^p(I,w_0;\rX)$.
The weighted Sobolev spaces $\rW^{m.p}(I,w_\kappa;\rX)$ are defined analogously, and for $\theta \in (0,1)$, the weighted Bessel potential spaces $\rH^{\theta,p}(I,w_\kappa;\rX)$ can be obtained by complex interpolation.
In the above context, for the particular case of $\rY_t = \rL^p(I_t,w_\kappa;\rX)$, we define $\rL_{\frP}^p(I_\tau \times \Omega,w_\kappa;\rX) \coloneqq \rL_{\frP}^p(\Omega;\rY_\tau)$.

\subsection{Bounded $\Hinfty$-calculus of block operator matrices and perturbation theory}\label{ssec:Hinfty block op and perturbation}
\

In the sequel, we discuss the notion of the boundedness of the $\cH^\infty$-calculus as well as its application to block operator matrices.
By $\Hinfty(\rX)$, we denote the class of operators admitting a bounded $\Hinfty$-calculus, and for $A \in \Hinfty(\rX)$, we use $\phi_{A}^\infty$ for the associated $\Hinfty$-angle.

In the following lemma, we collect some important implications of the boundedness of the $\Hinfty$-calculus.
The maximal $\rL^p$-regularity is implied by the characterization of maximal $\rL^p$-regularity in terms of $\cR$-boundedness of the resolvents in UMD spaces due to Weis \cite[Theorem~4.2]{Wei:01}, while the characterization of the fractional power domains in~(b) is a consequence of the boundedness of the imaginary powers, see, e.g., \cite[Theorem~2.5]{DHP:03}. 

\begin{lem}\label{lem:rel of Hinfty with other concepts}
Suppose that $A \in \Hinfty(\rX)$ with $\Hinfty$-angle $\phi_A^\infty \in [0,\pi)$.
\begin{enumerate}[(a)]
    \item If $\phi_A^\infty < \nicefrac{\pi}{2}$, then $-A$ generates a bounded analytic semigroup $(\mre^{-At})_{t \ge 0}$ of angle less than or equal to $\nicefrac{\pi}{2} - \phi_A^\infty$.
    If additionally, the space $\rX$ is a UMD Banach space, then $A$ enjoys maximal $\rL^p$-regularity on $\rX$.
    \item For $\alpha \in (0,1)$ and $\rX_\alpha \coloneqq (\rD(A^\alpha),\| \cdot \|_\alpha)$ with $\| x \|_\alpha \coloneqq \| x \| + \| A^\alpha x \|$, for $\rX_A$ denoting $\rD(A)$ endowed with the graph norm, and for $[\cdot,\cdot]_\alpha$ representing the complex interpolation functor, it is valid that $\rX_\alpha \cong [\rX,\rX_A]_\alpha$.
\end{enumerate}
\end{lem}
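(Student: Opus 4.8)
The plan is to derive both assertions from the standard chain of consequences that a bounded $\Hinfty$-calculus triggers, leaving the genuine analytic substance to the results already quoted in the statement. For part~(a), the starting observation is that $A \in \Hinfty(\rX)$ is in particular sectorial with sectoriality angle $\omega_A \le \phi_A^\infty$; hence, under the hypothesis $\phi_A^\infty < \nicefrac{\pi}{2}$, the classical generation theorem for sectorial operators (see \cite{DHP:03, KW:04}) shows that $-A$ generates a bounded analytic semigroup $(\mre^{-At})_{t \ge 0}$, analytic of angle $\nicefrac{\pi}{2} - \omega_A \ge \nicefrac{\pi}{2} - \phi_A^\infty$. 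For the maximal $\rL^p$-regularity claim, I would use that a bounded $\Hinfty$-calculus entails bounded imaginary powers with power angle at most $\phi_A^\infty$: for every $\sigma > \phi_A^\infty$ the function $z \mapsto z^{\ri s}$ lies in $\Hinfty(\Sigma_\sigma)$ with norm $\le \mre^{|s|\sigma}$, so the $\Hinfty$-calculus yields $\| A^{\ri s} \|_{\cL(\rX)} \le C_\sigma \mre^{|s|\sigma}$, that is, $A \in \cBIP(\rX)$ with power angle $\le \phi_A^\infty < \nicefrac{\pi}{2}$. Since $\rX$ is UMD, the Dore--Venni theorem then yields maximal $\rL^p$-regularity of $A$ on $\rX$; as noted in the statement, this is equally the content of Weis's characterization of maximal $\rL^p$-regularity via $\cR$-boundedness of the resolvent \cite[Theorem~4.2]{Wei:01}, the bounded $\Hinfty$-calculus furnishing the requisite $\cR$-sectoriality of $\cR$-angle below $\nicefrac{\pi}{2}$ in this UMD setting.

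For part~(b), I would again invoke that $A \in \cBIP(\rX)$ with power angle $\le \phi_A^\infty$, obtained exactly as above. The identification $\rX_\alpha \cong [\rX,\rX_A]_\alpha$ for $\alpha \in (0,1)$ is then precisely the well-known description of the complex interpolation space between $\rX$ and $\rD(A)$ for operators with bounded imaginary powers, in the form recorded in \cite[Theorem~2.5]{DHP:03}. It remains only to match the inhomogeneous norms $\| \cdot \| + \| A^\alpha \cdot \|$ on $\rX_\alpha$ and $\| \cdot \| + \| A \cdot \|$ on $\rX_A$ with the norms appearing there, which is routine; should $0 \notin \rho(A)$, one first passes to $1 + A$, which shares the domain and the fractional power domains of $A$ and has $\Hinfty$-angle no larger than $\phi_A^\infty$.

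Since all of the substantive input is supplied by \cite{Wei:01} and \cite{DHP:03}, I do not anticipate a real obstacle here. The main points needing a little care are the bookkeeping between the several angles in play --- sectoriality angle, $\Hinfty$-angle, power angle and $\cR$-angle --- and, in part~(a), keeping track of where the UMD hypothesis actually enters, namely in the passage from the bounded $\Hinfty$-calculus (equivalently, from bounded imaginary powers) to maximal $\rL^p$-regularity.
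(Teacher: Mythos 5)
Your proposal is correct and follows essentially the same route as the paper, which simply cites Weis's $\cR$-boundedness characterization of maximal $\rL^p$-regularity in UMD spaces \cite[Theorem~4.2]{Wei:01} for part~(a) and the BIP-based description of complex interpolation spaces \cite[Theorem~2.5]{DHP:03} for part~(b); your intermediate step through bounded imaginary powers is exactly the intended mechanism. (As a minor aside, your direction of the angle inequality for the analytic semigroup, namely angle at least $\nicefrac{\pi}{2} - \phi_A^\infty$, is the sharper and standard formulation.)
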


Next, we make precise the notion of a diagonally dominant block operator matrix.

\begin{defn}\label{def:diag dom op matrix}
Consider Banach spaces $\rX_1$ and $\rX_2$ as well as linear operators $A \colon \rD(A) \subset \rX_1 \to \rX_1$, $B \colon \rD(B) \subset \rX_2 \to \rX_1$, $C \colon \rD(C) \subset \rX_1 \to \rX_2$ and $D \colon \rD(D) \subset \rX_2 \to \rX_2$. 
Moreover, set $\rX \coloneqq \rX_1 \times \rX_2$.
If 
\begin{enumerate}[(a)]
    \item the operators $A$ and $D$ are closed, linear and densely defined, and
    \item $C$ is relatively $A$-bounded, and $B$ is relatively $D$-bounded, i.e., $\rD(D) \subset \rD(B)$, $\rD(A) \subset \rD(C)$, and there are constants $c_A$, $c_D$, $L \ge 0$ so that for all $x \in \rD(A)$ and $y \in \rD(D)$, we have
    \begin{equation*}
        \| C x \|_{\rX_2} \le c_A \cdot \| A x \|_{\rX_1} + L \cdot \| x \|_{\rX_1} \tand \| B y \|_{\rX_1} \le c_D \cdot \| D y \|_{\rX_2} + L \cdot \| y \|_{\rX_2},
    \end{equation*}
\end{enumerate}
then the operator matrix $\cA \colon \rD(\cA) \coloneqq \rD(A) \times \rD(D) \subset \rX \to \rX$ defined by
\begin{equation}\label{eq:block op matrix}
    \cA \binom{x}{y} \coloneqq \begin{pmatrix}
        A & B\\
        C & D
    \end{pmatrix} \binom{x}{y}, \tfor \binom{x}{y} \in \rD(\cA),
\end{equation}
is called diagonally dominant.
\end{defn}

As an auxiliary result, we invoke the following lemma on the preservation of the bounded $\cH^\infty$-calculus under similarity transforms.
The assertion can be found in \cite[Proposition~2.11(vi)]{DHP:03}.

\begin{lem}\label{lem:preservation of results under sim trafe}
Consider Banach spaces $\rX$ and $\rY$, $S \in \cL(\rX,\rY)$ bijective, and a closed linear operator $A$ on~$\rX$, and set $A_1 \coloneqq S A S^{-1}$. 
Then $A \in \cH^\infty(\rX)$ iff $A_1 \in \cH^\infty(\rY)$, and $\phi_A^\infty = \phi_{A_1}^\infty$.
\end{lem}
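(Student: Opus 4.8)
The final statement is \autoref{lem:preservation of results under sim trafe}, the invariance of the bounded $\Hinfty$-calculus under similarity transforms. The plan is to argue directly from the definition of the $\Hinfty$-calculus, showing that the Dunford-type functional calculus intertwines with conjugation by $S$, so that the norm estimates transfer verbatim with the same angle.

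First I would recall the basic setup: for $A$ sectorial on $\rX$ of angle $\phi$, and $\psi \in \Hinfty(\Sigma_\sigma)$ a bounded holomorphic function decaying suitably at $0$ and $\infty$ on a sector $\Sigma_\sigma$ with $\sigma \in (\phi,\pi)$, the operator $\psi(A)$ is defined by the contour integral $\psi(A) = \frac{1}{2\pi\ri}\int_{\partial\Sigma_\nu} \psi(z)(z-A)^{-1}\rd z$ for an appropriate intermediate angle $\nu$. The key elementary observation is that $A_1 = SAS^{-1}$ is sectorial on $\rY$ of the same angle, since $\rho(A_1) = \rho(A)$ and $(z - A_1)^{-1} = S(z-A)^{-1}S^{-1}$, whence $\|(z-A_1)^{-1}\|_{\cL(\rY)} \le \|S\|\,\|S^{-1}\|\,\|(z-A)^{-1}\|_{\cL(\rX)}$, and symmetrically; the spectral angle is unchanged because conjugation by a fixed bijection does not move the spectrum. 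Plugging this resolvent identity into the contour integral and pulling the bounded operators $S$, $S^{-1}$ out of the (norm-convergent) integral gives $\psi(A_1) = S\,\psi(A)\,S^{-1}$ for every such $\psi$.

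From here the boundedness estimate is immediate: if $A \in \Hinfty(\rX)$ with angle $\phi_A^\infty$, then for every $\sigma \in (\phi_A^\infty,\pi)$ there is $C_\sigma$ with $\|\psi(A)\|_{\cL(\rX)} \le C_\sigma \|\psi\|_{\infty,\Sigma_\sigma}$ for all $\psi$ in the relevant Hardy class, and therefore
\begin{equation*}
    \| \psi(A_1) \|_{\cL(\rY)} = \| S\,\psi(A)\,S^{-1} \|_{\cL(\rY)} \le \| S \|\, \| S^{-1} \|\, C_\sigma\, \| \psi \|_{\infty,\Sigma_\sigma}.
\end{equation*}
By the standard approximation/convergence lemma (extending the estimate from the Hardy class with decay to all of $\Hinfty(\Sigma_\sigma)$ via the McIntyre approximation $\psi_\eps(z) = \psi(z)\frac{\eps z}{(1+\eps z)^2}$ or the equivalent, together with uniform boundedness), this shows $A_1 \in \Hinfty(\rY)$ with $\phi_{A_1}^\infty \le \sigma$; letting $\sigma \downarrow \phi_A^\infty$ yields $\phi_{A_1}^\infty \le \phi_A^\infty$. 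The reverse inequality follows by symmetry, applying the same argument to $A = S^{-1} A_1 S$ with the bijection $S^{-1} \in \cL(\rY,\rX)$, giving $\phi_A^\infty \le \phi_{A_1}^\infty$ and hence equality; this also gives the ``iff''.

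There is no genuine obstacle here — the result is essentially bookkeeping once one has the resolvent conjugation identity — but the one point requiring a little care is the passage from the primary functional calculus (defined only for functions with decay, where the contour integral converges absolutely) to the full algebra homomorphism on $\Hinfty(\Sigma_\sigma)$: one must check that the regularization procedure defining $\psi(A)$ and $\psi(A_1)$ is compatible with conjugation, i.e. that $S(\psi_\eps(A))S^{-1} = (\psi_\eps)(A_1)$ for the regularizers and that the limits are taken in a topology (strong operator topology on a dense set, plus uniform bounds) preserved by left/right multiplication by the fixed bounded operators $S, S^{-1}$. Since the paper cites \cite[Proposition~2.11(vi)]{DHP:03} for this, I would in practice simply invoke that reference rather than reproduce the regularization argument; the sketch above records why it is true.
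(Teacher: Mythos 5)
Your proposal is correct and matches the paper's treatment: the paper gives no argument of its own but simply cites \cite[Proposition~2.11(vi)]{DHP:03}, and your sketch is exactly the standard proof behind that citation (resolvent conjugation $(z-A_1)^{-1} = S(z-A)^{-1}S^{-1}$, hence $\psi(A_1)=S\psi(A)S^{-1}$ on the decaying Hardy class, norm transfer with constant $\|S\|\,\|S^{-1}\|$, and symmetry for the angle). The only nit is the name of the regularization — it is the McIntosh approximation, not ``McIntyre'' — which does not affect the argument.
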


The next result follows directly from \cite[Cor.~7.2]{AH:23} upon invoking the similarity transform given by $S = S^{-1} = \begin{pmatrix} 0 & \Id\\ \Id & 0 \end{pmatrix}$ and employing \autoref{lem:preservation of results under sim trafe}.

\begin{prop}\label{prop:op theoret props of block op matrices}
Let $\cA$ as defined in \eqref{eq:block op matrix} be diagonally dominant in the sense of \autoref{def:diag dom op matrix}, and assume in addition that $D \in \cL(\rX_2)$ as well as $A \in \cH^\infty(\rX_1)$ with $\phi_A^\infty \in [0,\pi)$.
Then for each $\phi \in (\phi_A^\infty,\pi)$, there is $\lambda \ge 0$ with $\lambda + \cA \in \cH^\infty(\rX)$ and $\phi_{\lambda + \cA}^\infty \le \phi$.
\end{prop}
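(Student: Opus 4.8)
The plan is to reduce the statement to \cite[Cor.~7.2]{AH:23} by exchanging the two summands of the product space $\rX = \rX_1 \times \rX_2$, so that the operator carrying the bounded $\Hinfty$-calculus is moved to the lower-right corner and the bounded operator to the upper-left corner, which is the configuration treated there.

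First I would record the effect of the flip $S = S^{-1} = \begin{pmatrix} 0 & \Id \\ \Id & 0 \end{pmatrix}$, viewed as a bounded bijection $S \in \cL(\rX,\rX_2 \times \rX_1)$. Since $S(x,y) = (y,x)$, a direct computation gives $\rD(S \cA S^{-1}) = S\rD(\cA) = \rD(D) \times \rD(A)$ and
\[
    S \cA S^{-1} = \begin{pmatrix} D & C \\ B & A \end{pmatrix} \colon \rD(D) \times \rD(A) \subset \rX_2 \times \rX_1 \to \rX_2 \times \rX_1 .
\]
Next I would check that this transformed matrix again satisfies \autoref{def:diag dom op matrix}, now on the product $\rX_2 \times \rX_1$. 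Indeed, the diagonal entries $D$ and $A$ are closed, linear and densely defined ($D$ because it is bounded with full domain, $A$ by hypothesis). For the off-diagonal entries, note that in the matrix $\begin{pmatrix} D & C \\ B & A \end{pmatrix}$ the lower-left entry $B$ must be relatively bounded with respect to the upper-left entry $D$, which is exactly the hypothesis that $B$ is relatively $D$-bounded; and the upper-right entry $C$ must be relatively bounded with respect to the lower-right entry $A$, which is exactly the hypothesis that $C$ is relatively $A$-bounded. Thus the diagonal-dominance hypothesis passes to $S \cA S^{-1}$ with the same constants $c_A$, $c_D$, $L$; one may also note in passing that, since $D \in \cL(\rX_2)$, the relative $D$-boundedness of $B$ together with $\rD(D) \subset \rD(B)$ forces $B \in \cL(\rX_2,\rX_1)$, though this is not needed for the argument.

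At this point $S \cA S^{-1}$ is a diagonally dominant block operator matrix with a bounded operator $D$ in the upper-left corner and an operator $A \in \cH^\infty(\rX_1)$ with $\phi_A^\infty \in [0,\pi)$ in the lower-right corner, which are precisely the hypotheses of \cite[Cor.~7.2]{AH:23}. Applying that result, I obtain, for each prescribed $\phi \in (\phi_A^\infty,\pi)$, some $\lambda \ge 0$ with $\lambda + S\cA S^{-1} \in \cH^\infty(\rX_2 \times \rX_1)$ and $\phi_{\lambda + S\cA S^{-1}}^\infty \le \phi$. Finally, since $\lambda + S\cA S^{-1} = S(\lambda + \cA)S^{-1}$ and $S$ is a bounded bijection, \autoref{lem:preservation of results under sim trafe} transports this back: $\lambda + \cA \in \cH^\infty(\rX)$ with $\phi_{\lambda + \cA}^\infty = \phi_{\lambda + S\cA S^{-1}}^\infty \le \phi$, which is the claim.

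I do not expect a genuine obstacle here, as the analytic content sits entirely in \cite[Cor.~7.2]{AH:23}. The only points requiring care are the bookkeeping that "diagonally dominant" is preserved under the swap — i.e.\ that the relative-boundedness conditions land against the correct diagonal entries — and the observation that the bounded perturbation $D$ does not enlarge the $\Hinfty$-angle past any chosen $\phi > \phi_A^\infty$ once an appropriate spectral shift $\lambda$ is made, which is exactly what the cited corollary delivers.
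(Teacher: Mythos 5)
Your argument is correct and coincides with the paper's: the paper likewise obtains the result from \cite[Cor.~7.2]{AH:23} via the flip $S = S^{-1} = \begin{pmatrix} 0 & \Id\\ \Id & 0 \end{pmatrix}$ and \autoref{lem:preservation of results under sim trafe}. Your additional bookkeeping that the swap sends the relative-boundedness hypotheses to the correct diagonal entries is accurate and merely makes explicit what the paper leaves implicit.
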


Let us observe that the diagonal dominance of $\cA$ together with the boundedness of $D$ already implies that $B \colon \rD(D) = \rX_2 \to \rX_1$ has to be bounded.

Next, we discuss perturbation theory for the boundedness of the $\Hinfty$-calculus.
Note that it is generally {\em not} preserved under relatively bounded perturbations, see \cite{McIY:90}.
The following perturbation result is based on the relative boundedness with respect to a fractional power, see \cite[Corollary~3.3.15]{PS:16}.

\begin{prop}\label{prop:pert of Hinfty-calculus}
Let $A \in \Hinfty(\rX)$, consider a linear operator $B$ on $\rX$ such that $\rD(A^\alpha) \subset \rD(B)$ for some $\alpha \in [0,1)$, and assume that there exist $a$, $b \ge 0$ with
\begin{equation*}
    \| B x \| \le a \cdot \| x \| + b \cdot \| A^\alpha x \|, \tfor x \in \rD(A^\alpha).
\end{equation*}
If additionally, $A + B$ is sectorial and invertible, then $A + B \in \Hinfty(\rX)$ with $\phi_{A+B}^\infty \le \max\{\phi_A^\infty,\phi_{A+B}\}$.
\end{prop}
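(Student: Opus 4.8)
The plan is to establish the bounded $\Hinfty$-calculus of $A+B$ by comparing its sectorial functional calculus with that of $A$ through the resolvent identity, exploiting that $B$ is of strictly lower order than $A$ on the scale of fractional powers. First I would reduce to the case that $A$ is invertible: since $A\in\Hinfty(\rX)$, also $\mu+A\in\Hinfty(\rX)$ with $\phi_{\mu+A}^\infty\le\phi_A^\infty$ for every $\mu\ge 0$, so replacing $A$ by $\hat A:=\mu_0+A$ and $B$ by $\hat B:=B-\mu_0$ for $\mu_0>0$ large leaves $A+B$ unchanged; one checks $\rD(\hat A^\alpha)=\rD(A^\alpha)\subset\rD(B)=\rD(\hat B)$ and, using boundedness of $A^\alpha\hat A^{-\alpha}$, that $\hat B$ remains $\hat A^\alpha$-bounded, so the hypotheses are preserved. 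Hence from now on $0\in\rho(A)$, while $0\in\rho(A+B)$ by assumption.

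Next I would fix $\phi_0:=\max\{\phi_A^\infty,\phi_{A+B}\}$, an arbitrary $\phi\in(\phi_0,\pi)$ and an intermediate $\psi\in(\phi_0,\phi)$, and work along the contour $\Gamma:=\partial\Sigma_\psi$. For $z\in\Gamma$ one has $z\in\rho(A)\cap\rho(A+B)$, and from sectoriality of $A$, resp.\ of $A+B$, together with the moment inequality $\|A^\alpha R(z,A)\|\le\|AR(z,A)\|^\alpha\|R(z,A)\|^{1-\alpha}$ and invertibility at the origin, one obtains a constant $C$ with $\|R(z,A)\|+\|R(z,A+B)\|\le C/(1+|z|)$ and $\|A^\alpha R(z,A)\|\le C\max(1,|z|^\alpha)/(1+|z|)$ on $\Gamma$. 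Since $R(z,A)$ maps $\rX$ into $\rD(A)\subset\rD(A^\alpha)\subset\rD(B)$, the operator $BR(z,A)$ is everywhere defined, and the assumed bound on $B$ yields $\|BR(z,A)\|\le a\|R(z,A)\|+b\|A^\alpha R(z,A)\|\le C\max(1,|z|^\alpha)/(1+|z|)$ on $\Gamma$. Moreover the resolvent identity $R(z,A+B)-R(z,A)=R(z,A+B)\,B\,R(z,A)$ holds for $z\in\Gamma$, all compositions being legitimate by these domain inclusions.

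Then, for $f$ in the elementary class $\cH_0^\infty(\Sigma_\phi)$ of bounded holomorphic functions with polynomial decay at $0$ and $\infty$, I would write the natural sectorial functional calculi of $A$ and of $A+B$ along the common contour $\Gamma$ and subtract, obtaining
\begin{equation*}
    f(A+B)-f(A)=\frac{1}{2\pi\ri}\int_\Gamma f(z)\,R(z,A+B)\,B\,R(z,A)\rd z.
\end{equation*}
By the estimates above the integrand is bounded in operator norm by $C\|f\|_{\Hinfty(\Sigma_\phi)}\,g(z)$ with $g(z)=\max(1,|z|^\alpha)/(1+|z|)^2$, and $g$ is integrable along $\Gamma$: it is bounded near $z=0$ and decays like $|z|^{\alpha-2}$ near infinity, where $\alpha-2<-1$ since $\alpha<1$. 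This gives $\|f(A+B)-f(A)\|\le C\|f\|_{\Hinfty(\Sigma_\phi)}$, and since $A\in\Hinfty(\rX)$ with $\phi_A^\infty<\psi$ also $\|f(A)\|\le C\|f\|_{\Hinfty(\Sigma_\phi)}$; hence $\|f(A+B)\|\le C\|f\|_{\Hinfty(\Sigma_\phi)}$ uniformly over $\cH_0^\infty(\Sigma_\phi)$. As $A+B$ is sectorial and invertible, the convergence lemma for the $\Hinfty$-calculus upgrades this uniform bound to $A+B\in\Hinfty(\rX)$ with $\phi_{A+B}^\infty\le\phi$, and letting $\phi\downarrow\phi_0$ yields the stated angle bound.

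The main obstacle is not a single hard estimate but the careful bookkeeping: one must track that $B$ is defined only on $\rD(A^\alpha)$, so that every product $BR(z,A)$ and the resolvent identity itself are meaningful, and one must notice that invertibility of $A$ is genuinely needed — without the shift in the first step the term $a\|R(z,A)\|\sim a/|z|$ would create a non-integrable singularity of the integrand at the origin, which invertibility of $A+B$ alone does not repair. The decay at infinity is exactly what the strict inequality $\alpha<1$ buys, and sectoriality together with invertibility of $A+B$ is precisely what is needed to apply the convergence lemma at the end.
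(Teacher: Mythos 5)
Your argument is correct and is essentially the standard proof of this perturbation theorem; the paper does not prove the statement itself but cites \cite[Corollary~3.3.15]{PS:16}, whose proof proceeds by the same route (shift to make $A$ invertible, the resolvent identity $R(z,A+B)-R(z,A)=R(z,A+B)BR(z,A)$, the bound $\|BR(z,A)\|\lesssim \max(1,|z|^\alpha)/(1+|z|)$ from relative $A^\alpha$-boundedness, integrability of the difference of the two Cauchy integrals thanks to $\alpha<1$, and the convergence lemma). Your bookkeeping of the domain inclusions and of why invertibility is needed at the origin is accurate, so no gaps to report.
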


\subsection{Stochastic maximal regularity and bilinear SPDEs}\label{ssec:stoch max reg and semilin SPDEs}
\ 

This subsection is dedicated to recalling the notion of stochastic maximal regularity and providing a framework for bilinear SPDEs tailored to our application to the fluid-rigid body interaction problem.

\noindent {\bf Stochastic maximal regularity and role of $\Hinfty(\rX_0)$.}

\noindent 
First, let us make some assumptions on the Banach spaces to be considered in the sequel.
In fact, we suppose that $\rX_0$ and $\rX_1$ are UMD Banach spaces with type $2$ such that $\rX_1 \hookrightarrow \rX_0$.
In the sequel, we will denote by $\rX_\theta$ the complex interpolation spaces, i.e., $\rX_\theta = [\rX_0,\rX_1]_{\theta}$ for $\theta \in (0,1)$, and for $\theta \in (0,1)$ as well as $p \in (1,\infty)$, we use $(\rX_0,\rX_1)_{\theta,p}$ to denote the real interpolation spaces.

For $A$, $B$, $f$ and $g$ made precise below, $a \in [0,\infty)$ and a stopping time $\tau$, we investigate
\begin{equation}\label{eq:lin SPDE}
    \left\{
    \begin{aligned}
        \rd u + A u \rd t
        &= f \rd t + (B u + g) \rd W_\rH, \tfor t \in [a,\tau],\\
        u(a)
        &= u_a.
    \end{aligned}
    \right.
\end{equation}
In the following, we will assume that $A \in \cL(\rX_1,\rX_0)$ and $B \in \cL(\rX_1,\gamma(\rH,\rX_{\nicefrac{1}{2}}))$, where we recall the latter space from \autoref{ssec:prelims}.
Before proceeding with the definition of stochastic maximal $\rL^p$-regularity, we clarify the notion of a strong solution to \eqref{eq:lin SPDE}.

\begin{defn}
Assume that $f \in \rL^1(0,\tau;\rX_0)$ a.s.\ and $g \in \rL^2(0,\tau;\gamma(\rH,\rX_{\nicefrac{1}{2}}))$ a.s.\ are both progressively measurable.
We say that a progressively measurable process $u \colon [0,\tau] \times \Omega \to \rX_0$ is a strong solution to~\eqref{eq:lin SPDE} on $[0,\tau]$ provided $u \in \rC([0,\tau];\rX_0) \cap \rL^2(0,\tau;\rX_1)$ a.s., and for each $t \in [0,\tau]$, a.s.\ it holds that
\begin{equation}\label{eq:int eq strong sol to lin SPDE}
    u(t) - u_0 + \int_0^t A u(s) \rd s = \int_0^t f(s) \rd s + \int_0^t (B u(s) + g(s)) \rd W_\rH(s).
\end{equation}
\end{defn}

The discussion from \autoref{ssec:prelims} reveals that the stochastic integral in \eqref{eq:int eq strong sol to lin SPDE} is well-defined.
We are now in the position to introduce the concept of stochastic maximal regularity.
Part~(a) of the definition concerns the spatial regularization, while (bi) and (bii) also capture the regularization in time, which is more delicate in the stochastic setting due to the roughness of the noise.

\begin{defn}\label{def:stoch max reg}
Consider $p \in [2,\infty)$ and $\kappa \in [0,\nicefrac{p}{2}-1) \cup \{0\}$.
\begin{enumerate}[(a)]
    \item For the pair $(A,B)$, we write $(A,B) \in \cSMR_{p,\kappa}$, provided for all $T \in (0,\infty)$, there is a constant $C_T$ such that for all $a \in [0,T]$ and every stopping time $\tau \colon \Omega \to [a,T]$, each progressively measurable $f \in \rL^p(\Omega;\rL^p(a,\tau;w_\kappa;\rX_0))$ and $g \in \rL^p(\Omega;\rL^p(a,\tau,w_\kappa;\gamma(\rH,\rX_{\nicefrac{1}{2}}))$, there exists a unique solution~$u$ to \eqref{eq:lin SPDE} on $[a,\tau]$ with $u_a = 0$ such that $u \in \rL^p(\Omega;\rL^p(a,\tau,w_\kappa;\rX_1))$, with estimate
    \begin{equation*}
        \| u \|_{\rL^p(\Omega;\rL^p(a,\tau,w_\kappa;\rX_1))} \le C_T\bigl(\| f \|_{\rL^p(\Omega;\rL^p(a,\tau,w_\kappa;\rX_0))} + \| g \|_{\rL^p(\Omega;\rL^p(a,\tau,w_\kappa;\gamma(\rH,\rX_{\nicefrac{1}{2}})))}\bigr).
    \end{equation*}
    \item[(bi)]  In the case $p \in (2,\infty)$ and $\kappa \in [0,\nicefrac{p}{2}-1)$, we write $(A,B) \in \cSMR_{p,\kappa}^\bullet$ if $(A,B) \in \cSMR_{p,\kappa}$, and for each $\theta \in (0,\nicefrac{1}{2})$ and $T \in (0,\infty)$, there is a constant $C_{T,\theta}$ so that for all $[a,b] \subset [0,T]$, for every progressively measurable $f \in \rL^p(\Omega;\rL^p(a,b;w_\kappa;\rX_0))$ and $g \in \rL^p(\Omega;\rL^p(a,b,w_\kappa;\gamma(\rH,\rX_{\nicefrac{1}{2}}))$, the strong solution $u$ to \eqref{eq:lin SPDE} on $[a,b]$ with $u_a = 0$ fulfills the estimate
    \begin{equation*}
        \| u \|_{\rL^p(\Omega;\rH^{\theta,p}(a,b,w_\kappa;\rX_{1-\theta}))} \le C_{T,\theta}\bigl(\| f \|_{\rL^p(\Omega;\rL^p(a,b,w_\kappa;\rX_0))} + \| g \|_{\rL^p(\Omega;\rL^p(a,b,w_\kappa;\gamma(\rH,\rX_{\nicefrac{1}{2}})))}\bigr).
    \end{equation*}
    \item[(bii)] If $p = 2$ and accordingly $\kappa = 0$, we write $(A,B) \in \cSMR_{2,0}^\bullet$ if $(A,B) \in \cSMR_{2,0}$, and for each~$T \in (0,\infty)$, there is a constant $C_T$ so that for all $[a,b] \subset [0,T]$, for each progressively measurable $f \in \rL^2(\Omega;\rL^2(a,b;\rX_0))$ and $g \in \rL^2(\Omega;\rL^2(a,b;\gamma(\rH,\rX_{\nicefrac{1}{2}}))$, the solution $u$ to \eqref{eq:lin SPDE} on~$[a,b]$ with $u_a = 0$ satisfies
    \begin{equation*}
        \| u \|_{\rL^2(\Omega;\rC([a,b];\rX_{\nicefrac{1}{2}}))} \le C_T\bigl(\| f \|_{\rL^2(\Omega;\rL^2(a,b;\rX_0))} + \| g \|_{\rL^2(\Omega;\rL^2(a,b;\gamma(\rH,\rX_{\nicefrac{1}{2}})))}\bigr).
    \end{equation*}
\end{enumerate}
If $(A,B) \in \cSMR_{p,\kappa}^\bullet$, we say that $(A,B)$ admits stochastic maximal $\rL_\kappa^p$-regularity.
\end{defn}

The case of non-zero initial values can also be handled, see \cite[Section~3.3]{AV:25} for more details.
Below, we provide a sufficient condition for stochastic maximal $\rL_\kappa^p$-regularity in the case $B = 0$ in terms of the boundedness of the $\Hinfty$-calculus.
In the unweighted case, it is due to van Neerven, Veraar and Weis \cite{vNVW:12a, vNVW:12b}, while the extension to the weighted setting was achieved by Agresti and Veraar \cite[Section~7]{AV:20}.

\begin{prop}\label{prop:stoch max reg via Hinfty subset of Lq}
Consider a $\sigma$-finite measure space $(\cO,\Sigma,\mu)$, and let $q \in [2,\infty)$.
Moreover, assume that $\rX_0 = \rL^q(\cO)$, or that $\rX_0$ is isomorphic to a closed subspace of $\rL^q(\cO)$.
If there is $\lambda \ge 0$ such that~$\lambda + A \in \Hinfty(\rX_0)$ with $\phi_{\lambda + A}^\infty < \nicefrac{\pi}{2}$, then $(A,0) \in \cSMR_{p,\kappa}^\bullet$ for all $p \in (2,\infty)$ and $\kappa \in [0,\nicefrac{p}{2}-1)$.
In the case $q = 2$, we also get $(A,0) \in \cSMR_{2,0}^\bullet$.
\end{prop}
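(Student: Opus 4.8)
The plan is to reduce the weighted, stopping-time formulation of $\cSMR_{p,\kappa}^\bullet$ to the already-available results in the literature, treating the case $B = 0$ only. The statement is essentially a repackaging of \cite[Section~7]{AV:20} (weighted case, $p \in (2,\infty)$) and \cite{vNVW:12a, vNVW:12b} (unweighted Hilbert-space case $q = 2$), so the proof should cite these and verify that their hypotheses are met in our setting. First I would recall that a bounded $\Hinfty$-calculus with angle $< \nicefrac{\pi}{2}$ on $\rX_0$ — a closed subspace of $\rL^q(\cO)$ with $q \in [2,\infty)$, hence a UMD Banach space of type $2$ — is precisely the hypothesis under which \cite{vNVW:12a} establishes $(A,0) \in \cSMR_{p,0}$ for $p \in (2,\infty)$, and \cite{vNVW:12b} handles $p = 2$, $q = 2$; the key point here is that such $\rX_0$ has the property that $\gamma(\rH,\rX_0)$-norms are comparable to square-function norms, and the sharp stochastic maximal regularity estimate of van Neerven--Veraar--Weis applies under the $\Hinfty$-calculus assumption combined with this geometry.

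Next I would pass to the weighted setting: by \cite[Proposition~7.something]{AV:20} (the weighted extension of the van Neerven--Veraar--Weis theorem), the same $\Hinfty$-hypothesis yields $(A,0) \in \cSMR_{p,\kappa}$ for all $p \in (2,\infty)$ and $\kappa \in [0,\nicefrac{p}{2}-1)$ on fixed intervals $[a,b]$, together with the trace/embedding estimate into $\rH^{\theta,p}(a,b,w_\kappa;\rX_{1-\theta})$ for $\theta \in (0,\nicefrac{1}{2})$, which is exactly property (bi) of \autoref{def:stoch max reg}; the case $p = 2$, $q = 2$ gives the pathwise estimate in $\rC([a,b];\rX_{\nicefrac{1}{2}})$ of (bii). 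Then I would upgrade from deterministic intervals $[a,b]$ to general stopping times $\tau \colon \Omega \to [a,T]$: since $B = 0$, the solution operator is the deterministic convolution with the analytic semigroup $(\mre^{-At})_{t \ge 0}$ (which exists and is bounded analytic by \autoref{lem:rel of Hinfty with other concepts}(a)), and one localizes by replacing $f$, $g$ with $\bfone_{\llb 0,\tau\rrb} f$, $\bfone_{\llb 0,\tau\rrb} g$ and using that the stochastic integral up to a stopping time agrees with the integral of the restricted integrand — this is the standard argument recorded in \cite[Theorem~3.9]{AV:25} or \cite[Prop.~3.12]{AV:22a}, and it is purely formal once the fixed-interval estimate and progressive measurability are in hand.

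The main obstacle is not really conceptual but bookkeeping: one must make sure the hypotheses of the cited weighted result are met verbatim — in particular that $\rX_0$ being (isomorphic to) a closed subspace of $\rL^q(\cO)$ suffices in place of $\rX_0 = \rL^q(\cO)$ itself (it does, because the property used is that $\rX_0$ has ``type $2$ and cotype $q$''-style square-function estimates, or more precisely Pisier's property $(\alpha)$ is not needed; what is needed is that $\gamma(\rH,\rX_0) \hookrightarrow \rL^q$-type spaces behave well, which is stable under passing to closed subspaces), and that the shift $\lambda + A$ rather than $A$ causes no harm (the $\lambda$ only affects the constant $C_T$ over the finite horizon, by absorbing $\mre^{\lambda T}$). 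The other point requiring a line of care is that $\kappa \in [0,\nicefrac{p}{2}-1)$ is exactly the admissible weight range ensuring $w_\kappa$ is an $A_p$-weight compatible with the stochastic Calderón--Zygmund theory used in \cite{AV:20}; this is why the restriction appears in the statement. I would therefore structure the proof as: (i) verify Banach space geometry of $\rX_0$; (ii) invoke \cite{vNVW:12a, vNVW:12b} for the unweighted fixed-interval estimate; (iii) invoke \cite[Section~7]{AV:20} for the weighted fixed-interval estimates (a), (bi), (bii); (iv) localize to stopping times via \cite{AV:25}. No genuinely new computation is needed.
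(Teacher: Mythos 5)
Your proposal matches the paper's treatment: the result is stated there without proof, attributed exactly as you do to \cite{vNVW:12a, vNVW:12b} for the unweighted case and to \cite[Section~7]{AV:20} for the weighted extension, and your additional bookkeeping (UMD and type $2$ pass to closed subspaces of $\rL^q$ with $q \ge 2$; the shift $\lambda$ only affects constants on finite horizons; localization to stopping times is standard for $B=0$) is all sound. The only micro-quibble is that the relevant Muckenhoupt class for $w_\kappa = t^\kappa$ with $\kappa \in [0,\nicefrac{p}{2}-1)$ is $A_{\nicefrac{p}{2}}$ rather than $A_p$, but this does not affect the argument.
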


Finally, in order to allow for (small) transport noise, we also include the following perturbation result, which follows from \cite[Theorem~3.2]{AV:24}.

\begin{lem}\label{lem:pert SMR}
Suppose that $(A,B) \in \cSMR_{p,\kappa}^\bullet \cap \cSMR_p$, and fix $\delta \in (\nicefrac{(1+\kappa)}{p},\nicefrac{1}{2})$ if $p > 2$, and any $\delta \in (0,\nicefrac{1}{2})$ if $p=2$.
Moreover, let $B_0 \in \cL(\rX_1,\gamma(\rH,\rX_{\nicefrac{1}{2}}))$ such that for $C_B \in (0,1)$ sufficiently small, there exists $L_B \in (0,\infty)$ such that for all $x \in \rX_1$ and $t \in (0,T)$, we have
\begin{equation*}
    \| B_0 x \|_{\gamma(\rH,\rX_{\nicefrac{1}{2}})} \le C_B \cdot \| x \|_{\rX_1} + L_B \cdot \| x \|_{\rX_0}.
\end{equation*}
Then we obtain $(A,B + B_0) \in \cSMR_{p,\kappa}^\bullet$.
\end{lem}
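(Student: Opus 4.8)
The statement is exactly \cite[Theorem~3.2]{AV:24}, so the plan is to recall how its proof proceeds: a Banach fixed point argument around the solution operator of the unperturbed pair $(A,B)$, carried out first on short time intervals and then globalised by concatenation. I would fix $[a,b]\subset[0,T]$ and admissible progressively measurable data $f\in\rL^p(\Omega;\rL^p(a,b,w_\kappa;\rX_0))$ and $g\in\rL^p(\Omega;\rL^p(a,b,w_\kappa;\gamma(\rH,\rX_{\nicefrac{1}{2}})))$. Since $(A,B)\in\cSMR_{p,\kappa}^\bullet\cap\cSMR_p$, for the initial value $0$ there is a unique strong solution of \eqref{eq:lin SPDE}; write it $\cR_{a,b}(f,g)$, a bounded linear map into
\begin{equation*}
    Z_{a,b}:=\rL_{\frP}^p\bigl(\Omega;\rL^p(a,b,w_\kappa;\rX_1)\bigr)\cap\rL_{\frP}^p\bigl(\Omega;\rH^{\delta,p}(a,b,w_\kappa;\rX_{1-\delta})\bigr)
\end{equation*}
(with the second factor replaced by $\rL_{\frP}^p(\Omega;\rC([a,b];\rX_{\nicefrac{1}{2}}))$ when $p=2$), with operator norm bounded \emph{uniformly in $[a,b]\subset[0,T]$} by a constant $\Cmr$, by \autoref{def:stoch max reg}. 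Let $Z^0_{a,b}\subset Z_{a,b}$ be the closed subspace of processes with zero trace at $t=a$, which is meaningful since $\delta>\nicefrac{(1+\kappa)}{p}$ forces $\rH^{\delta,p}(a,b,w_\kappa)\hookrightarrow\rC([a,b])$. Then $u$ is a strong solution of \eqref{eq:lin SPDE} for $(A,B+B_0)$ with data $(f,g)$ and $u_a=0$ iff $u$ is a fixed point of $\Phi(u):=\cR_{a,b}(f,\,g+B_0u)$, and $\Phi$ is a well-defined self-map of $Z^0_{a,b}$ because $B_0\in\cL(\rX_1,\gamma(\rH,\rX_{\nicefrac{1}{2}}))$ makes $g+B_0u$ admissible and $\cR_{a,b}$ always produces a zero-trace solution in $Z_{a,b}$.

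Next I would bound the Lipschitz constant of $\Phi$ on $Z^0_{a,b}$. By linearity $\Phi(u_1)-\Phi(u_2)=\cR_{a,b}(0,B_0(u_1-u_2))$, so it suffices to estimate $\|B_0w\|_{\rL^p(\Omega;\rL^p(a,b,w_\kappa;\gamma(\rH,\rX_{\nicefrac{1}{2}})))}$ for $w=u_1-u_2\in Z^0_{a,b}$. The pointwise bound in the hypothesis splits this into $C_B\|w\|_{\rL^p(\Omega;\rL^p(a,b,w_\kappa;\rX_1))}$ plus the lower-order contribution $L_B\|w\|_{\rL^p(\Omega;\rL^p(a,b,w_\kappa;\rX_0))}$. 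For the latter I would combine the embedding $\rX_{1-\delta}\hookrightarrow\rX_0$ with a weighted Hardy/Poincaré inequality for functions vanishing at the left endpoint --- available precisely for $\delta\in(\nicefrac{(1+\kappa)}{p},\nicefrac{1}{2})$, which is why that range is imposed and why $p=2$ (with $\rC$-in-time regularity replacing $\rH^{\delta,p}$) is treated separately --- to get, with $\varepsilon:=\delta-\nicefrac{(1+\kappa)}{p}>0$,
\begin{equation*}
    \|w\|_{\rL^p(\Omega;\rL^p(a,b,w_\kappa;\rX_0))}\le C\,(b-a)^{\varepsilon}\,\|w\|_{\rL^p(\Omega;\rH^{\delta,p}(a,b,w_\kappa;\rX_{1-\delta}))}\le C\,(b-a)^{\varepsilon}\,\|w\|_{Z^0_{a,b}}.
\end{equation*}
Hence $\Phi$ has Lipschitz constant at most $\Cmr\bigl(C_B+L_B\,C\,(b-a)^{\varepsilon}\bigr)$ on $Z^0_{a,b}$. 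Choosing first $C_B$ small enough that $\Cmr C_B\le\nicefrac{1}{3}$ (this fixes the meaning of ``$C_B$ sufficiently small'') and then $h\in(0,T]$ with $\Cmr L_B C h^{\varepsilon}\le\nicefrac{1}{3}$, the map $\Phi$ is a $\nicefrac{2}{3}$-contraction on $Z^0_{a,b}$ whenever $b-a\le h$; its fixed point $u$ satisfies $\|u\|_{Z^0_{a,b}}\le 3\|\cR_{a,b}(f,g)\|_{Z^0_{a,b}}$, which is the $\cSMR_{p,\kappa}^\bullet$-estimate on intervals of length $\le h$ (and, via its $\rL^p(a,b,w_\kappa;\rX_1)$-component, the $\cSMR_{p,\kappa}$-estimate). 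Uniqueness of strong solutions follows since any strong solution satisfies $u=\cR_{a,b}(f,g+B_0u)$ and hence automatically lies in $Z^0_{a,b}$.

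It remains to pass from intervals of length $\le h$ to all of $[0,T]$ and from deterministic intervals to stopping times. For the globalisation I would cover $[0,T]$ by $\lceil T/h\rceil$ consecutive intervals and concatenate the local solutions, using the non-zero-initial-value formulation of stochastic maximal regularity (as in \cite[Section~3.3]{AV:25}) on the intervals not containing $0$ --- where $w_\kappa$ is comparable to a constant, so that $(A,B)\in\cSMR_p$ is what is used there --- and exploiting the (quasi-)subadditivity of the $Z_{a,b}$-norm over a partition to upgrade the finitely many local bounds to the estimate required on an arbitrary $[a,b]\subset[0,T]$. For stopping times I would invoke the standard localisation principle for $\cSMR$, approximating $\tau$ by simple stopping times and reducing to deterministic intervals, the relevant measurability being supplied by \cite[Lemma~2.15]{AV:22a}. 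The genuinely delicate point throughout is the lower-order estimate: extracting a positive power of the interval length from the $L_B\|\cdot\|_{\rX_0}$-term in the \emph{weighted} setting is what dictates the admissible range of $\delta$ and the separate treatment of $p=2$, and this, together with the bookkeeping of the concatenation, is precisely the content of \cite[Theorem~3.2]{AV:24}.
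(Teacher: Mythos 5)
Your proposal is correct and consistent with the paper, which gives no independent argument for this lemma but simply derives it from \cite[Theorem~3.2]{AV:24}; your reconstruction of that theorem's proof (contraction around the unperturbed solution operator, absorbing the small leading-order part via $C_B$ and the lower-order part via a positive power of the interval length on zero-trace spaces, then concatenation and localization to stopping times) is the standard and intended one.
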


\noindent {\bf Bilinear SPDEs.}

\noindent
After recalling the concept of stochastic maximal regularity, we next discuss its application to bilinear SPDEs.
More precisely, the stochastic evolution equations under consideration take the form
\begin{equation}\label{eq:bilin SPDE}
    \left\{
    \begin{aligned}
        \rd u + A u \rd t
        &= F(u) \rd t + (B u + g) \rd W_\rH,\\
        u(0)
        &= u_0,
    \end{aligned}
    \right.
\end{equation}
where we further suppose that the nonlinear term $F$ is bilinear, i.e., $F(u) = G(u,u)$ for some suitable~$G$.
In the following, we make precise the assumptions on $A$, $B$, $F$ and $g$.
Let us start by saying that for the leading operators $A$ and $B$, we assume that the pair $(A,B)$ admits maximal stochastic $\rL^p$-regularity in the sense of \autoref{def:stoch max reg}(b).
As a result, for $w_\kappa(t) = t^\kappa$ and $\kappa \in [0,\nicefrac{p}{2}-1)$, the natural path space for the solution of \eqref{eq:bilin SPDE} in the case $p > 2$ is
\begin{equation*}
    \bigcap_{\theta \in [0,\nicefrac{1}{2})} \rH^{\theta,p}(0,T,w_\kappa;\rX_{1-\theta}) \subset \rL^p(0,T,w_\kappa;\rX_1) \cap \rC([0,T];\rX_{1-\frac{1+\kappa}{p},p}).
\end{equation*}
If $p = 2$, we use $\rL^2(0,T;\rX_1) \cap \rC([0,T];\rX_{\nicefrac{1}{2}})$.

Below, we capture the main assumption on the nonlinearity $F$ in \eqref{eq:bilin SPDE}.
Let us observe that the assumption on the $\Hinfty$-calculus of the operator $A$ is not necessary at this stage, but we include it for convenience, since by \autoref{lem:rel of Hinfty with other concepts}(b) it allows us to characterize the fractional power domains by the complex interpolation spaces.
For completeness, we also briefly recall the assumptions on $\rX_0$ and $\rX_1$.

\begin{asu}\label{ass:main ass bilin setting}
Assume that $\rX_0$ is a UMD space with type $2$, and that there is $\lambda \ge 0$ such that $\lambda + A \in \Hinfty(\rX_0)$ and $\rD(A) = \rX_1$.
Moreover, let $p \in [2,\infty)$, $\kappa \in [0,\nicefrac{p}{2}-1) \cup \{0\}$ and $\beta \in (1-\frac{1+\kappa}{p},1)$ with $\frac{1+\kappa}{p} \le 2(1-\beta)$, and suppose that $F(u) = G(u,u)$, where $G \colon \rX_\beta \times \rX_\beta \to \rX_0$ is bilinear and bounded.
Finally, assume that $g$ is as in \autoref{def:stoch max reg}.
\end{asu}

We say that a couple $(p,\kappa)$ is {\em critical} provided $\frac{1+\kappa}{p} = 2(1-\beta)$.
Accordingly, $(p,\kappa)$ is called {\em subcritical} if strict inequality holds in \autoref{ass:main ass bilin setting}, i.e., $\frac{1+\kappa}{p} < 2(1-\beta)$.

Before presenting the main abstract results, we make precise the notion of an $\rL_\kappa^p$-strong solution of the bilinear SPDE \eqref{eq:bilin SPDE}.

\begin{defn}\label{def:Lpkappa strong sol}
Let \autoref{ass:main ass bilin setting} be fulfilled for some $p \in [2,\infty)$ and $\kappa \in [0,\nicefrac{p}{2}-1) \cup \{0\}$.
Then we say that a pair $(u,\sigma)$ is an $\rL_\kappa^p$-strong solution of \eqref{eq:bilin SPDE} if $\sigma \colon \Omega \to [0,\infty)$ is a stopping time, $u \colon [0,\sigma] \to \rX_0$ is a strongly progressively measurable process with $u \in \rL^p(0,\sigma,w_\kappa;\rX_1) \cap \rC([0,\sigma];\rX_{1-\frac{1+\kappa}{p},p})$, and {a.s.} for all $t \in [0,\sigma]$, we have
\begin{equation*}
    u(t) - u_0 + \int_0^t A u(s) \rd s = \int_0^t F(u(s)) \rd s + \int_0^t \bfone_{[0,\sigma]}(s)[B(u(s)) + g(s)] \rd W_\rH(s).
\end{equation*}
\end{defn}

In the subsequent definition, we specify the concepts of an $\rL_\kappa^p$-local solution, uniqueness, an $\rL_\kappa^p$-maximal solution and the global character of a solution.

\begin{defn}
Let \autoref{ass:main ass bilin setting} be fulfilled for some $p \in [2,\infty)$ and $\kappa \in [0,\nicefrac{p}{2}-1) \cup \{0\}$.
\begin{enumerate}[(a)]
    \item We refer to a pair $(u,\sigma)$ as an $\rL_\kappa^p$-local solution to \eqref{eq:bilin SPDE} provided $\sigma \colon \Omega \to [0,\infty)$ is a stopping time, $u \colon [0,\sigma) \to \rX_0$ is a strongly progressively measurable process, and there is an increasing sequence of stopping times $(\sigma_n)_{n \ge 1}$ such that $\lim_{n \to \infty} \sigma_n = \sigma$ a.s., and $(\left. u\right|_{[0,\sigma_n]},\sigma)$ is an $\rL_\kappa^p$-strong solution to \eqref{eq:bilin SPDE}.
    The sequence $(\sigma_n)_{n \ge 1}$ is then also called a localizing sequence for $(u,\sigma)$.
    \item An $\rL_\kappa^p$-local solution $(u,\sigma)$ to \eqref{eq:bilin SPDE} is said to be unique if for each $\rL_\kappa^p$-local solution $(v,\tau)$, one has a.s.\ $u = v$ on $[0,\sigma \land \tau)$.
    \item We say that an $\rL_\kappa^p$-local solution $(u,\sigma)$ to \eqref{eq:bilin SPDE} is $\rL_\kappa^p$-maximal if for any other unique $\rL_\kappa^p$-local solution $(v,\tau)$ to \eqref{eq:bilin SPDE}, it holds that a.s.\ $\tau \le \sigma$ and $u = v$ on $[0,\tau)$.
    \item An $\rL_\kappa^p$-local solution $(u,\sigma)$ to \eqref{eq:bilin SPDE} is referred to as global provided $\sigma = \infty$ a.s.\ holds true.
\end{enumerate}
\end{defn}

We are now in the position to state the main result of this subsection on the local strong well-posedness of~\eqref{eq:bilin SPDE}.
The result includes the local existence and uniqueness of a strong solution as well as the local continuity.
Note that the result below is implied by the results in \cite[Section~4]{AV:22a}, see also \cite[Theorem~4.7 and~4.8]{AV:25}, upon making the following observations.
For $F(u) = G(u,u) \colon \rX_\beta \times \rX_\beta \to \rX_0$ bilinear and bounded, we get the estimate
\begin{equation*}
    \| F(u) - F(v) \|_{\rX_0} \le C(\| u \|_{\rX_\beta} + \| v \|_{\rX_\beta}) \| u - v \|_{\rX_\beta},
\end{equation*}
so we are in the framework of \cite[Assumption~4.1]{AV:25} with $\rho = 1$.
We remark that we focus on the situation of complex interpolation spaces here.
It can then be verified that the conditions of the parameters $p$, $\kappa$ and $\beta$ are as specified in \autoref{ass:main ass bilin setting}.

\begin{prop}\label{prop:loc ex result bilin SPDE}
Consider $p \in [2,\infty)$ and $\kappa \in [0,\nicefrac{p}{2}-1) \cup \{0\}$, and suppose that \autoref{ass:main ass bilin setting} is valid.
Further, assume that $A \in \cL(\rX_1,\rX_0)$ and $B \in \cL(\rX_1,\gamma(\rH,\rX_{\nicefrac{1}{2}}))$ fulfill $(A,B) \in \cSMR_{p,\kappa}^\bullet$.
Then for all $u_0 \in \rL_{\frF_0}^0(\Omega;\rX_{1-\frac{1+\kappa}{p},p})$, there is an $\rL_\kappa^p$-maximal solution $(u,\sigma)$ to \eqref{eq:bilin SPDE} such that $\sigma > 0$ a.s.\ holds true.
Besides, we get the subsequent additional properties:
\begin{enumerate}[(a)]
    \item For every localizing sequence $(\sigma_n)_{n \ge 1}$ for $(u,\sigma)$, we have
    \begin{itemize}
        \item if $p > 2$ and $\kappa \in [0,\nicefrac{p}{2}-1)$, then for every $n \ge 1$ and $\theta \in [0,\nicefrac{1}{2})$, we get
        \begin{equation*}
            u \in \rH^{\theta,p}(0,\sigma_n,w_\kappa;\rX_{1-\theta}) \cap \rC([0,\sigma_n];\rX_{1-\frac{1+\kappa}{p},p}) \enspace \text{a.s.},
        \end{equation*}
        and we obtain the instantaneous regularization reading as
        \begin{equation*}
            u \in \rH^{\theta,p}((0,\sigma);\rX_{1-\theta}) \cap \rC((0,\sigma);\rX_{1-\frac{1}{p},p}) \enspace \text{a.s.};
        \end{equation*}
        \item if $p = 2$, then for each $n \ge 1$, we have
        \begin{equation*}
            u \in \rL^2(0,\sigma_n;\rX_1) \cap \rC([0,\sigma_n];\rX_{\frac{1}{2}}) \enspace \text{a.s.}.
        \end{equation*}
    \end{itemize}
    \item Consider a strongly $\frF_0$-measurable initial datum $v_0 \colon \Omega \to \rX_{1-\frac{1+\kappa}{p},p}$.
    If $(v,\tau)$ is the $\rL_\kappa^p$-maximal solution to \eqref{eq:bilin SPDE} with initial value $v_0$, then a.s.\ on the set $\{u_0 = v_0\}$, we have $\tau = \sigma$ and $u = v$ on $[0,\sigma \land \tau)$.
    \item Assume that $u_0 \in \rL_{\frF_0}^p(\Omega;\rX_{1-\frac{1+\kappa}{p},p})$, and consider the resulting maximal $\rL_\kappa^p$-solution $(u,\sigma)$ to~\eqref{eq:bilin SPDE}.
    Then there are constants $C_0$, $T_0$, $\eps_0 > 0$ and stopping times $\sigma_0$, $\sigma_1 \in (0,\sigma]$ a.s.\ such that the following holds true.

    For every $v_0 \in \rL_{\frF_0}^p(\Omega;\rX_{1-\frac{1+\kappa}{p},p})$ with $\E \| u_0 - v_0 \|_{\rX_{1-\frac{1+\kappa}{p},p}}^p \le \eps_0$, the maximal $\rL_\kappa^p$-solution~$(v,\tau)$ to \eqref{eq:bilin SPDE} with initial data $v_0$ satisfies that there is a stopping time $\tau_0 \in (0,\tau]$ a.s.\ such that for all $t \in [0,T_0]$ and $\delta > 0$, we have
    \begin{equation*}
        \begin{aligned}
            \bP\Bigl(\sup_{r \in [0,t]} \| u(r) - v(r) \|_{\rX_{1-\frac{1+\kappa}{p},p}} \ge \delta,\, \sigma_0 \wedge \tau_0 > t\Bigr)
            &\le \frac{C_0}{\delta^p} \cdot \E \| u_0 - v_0 \|_{\rX_{1-\frac{1+\kappa}{p},p}}^p,\\
            \bP\Bigl(\| u - v \|_{\rL^p(0,t,w_\kappa;\rX_1)} \ge \delta, \, \sigma_0 \wedge \tau_0 > t\Bigr)
            &\le \frac{C_0}{\delta^p} \cdot \E \| u_0 - v_0 \|_{\rX_{1-\frac{1+\kappa}{p},p}}^p, \tand\\
            \bP(\sigma_0 \wedge \tau_0 \le t)
            &\le C_0 \bigl[\E \| u_0 - v_0 \|_{\rX_{1-\frac{1+\kappa}{p},p}}^p + \bP(\sigma_1 \le t) \bigr].
        \end{aligned}
    \end{equation*}
\end{enumerate}
\end{prop}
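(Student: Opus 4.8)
The plan is to deduce \autoref{prop:loc ex result bilin SPDE} from the abstract local well-posedness theory for stochastic evolution equations in critical spaces of Agresti and Veraar, in the precise form recorded in \cite[Section~4]{AV:22a} and \cite[Theorems~4.7 and~4.8]{AV:25}. Since that machinery already delivers the existence of a maximal solution, instantaneous regularization, and both the qualitative and quantitative local continuous dependence on the data, the proof amounts to checking that \eqref{eq:bilin SPDE} together with \autoref{ass:main ass bilin setting} and the hypothesis $(A,B) \in \cSMR_{p,\kappa}^\bullet$ meets the standing assumptions of those results.

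First, I would recast the nonlinearity. Because $F(u) = G(u,u)$ with $G \colon \rX_\beta \times \rX_\beta \to \rX_0$ bilinear and bounded, polarization gives $F(u) - F(v) = G(u-v,u) + G(v,u-v)$, hence
\[
    \| F(u) - F(v) \|_{\rX_0} \le \| G \| \, (\| u \|_{\rX_\beta} + \| v \|_{\rX_\beta}) \, \| u - v \|_{\rX_\beta},
\]
so $F$ fits \cite[Assumption~4.1]{AV:25} with a single interpolation parameter $\beta$ and growth exponent $\rho = 1$. Next I would verify the admissibility of $(p,\kappa,\beta)$: the conditions $p \in [2,\infty)$, $\kappa \in [0,\nicefrac{p}{2}-1) \cup \{0\}$, $\beta \in (1-\tfrac{1+\kappa}{p},1)$ and $\tfrac{1+\kappa}{p} \le 2(1-\beta)$ from \autoref{ass:main ass bilin setting} are exactly the ones required there once $\rho = 1$ is substituted, the (sub)criticality threshold $\tfrac{\rho+1}{\rho}(1-\beta)$ collapsing to $2(1-\beta)$; the requirement that $\rX_0$ be a UMD space with type $2$ is likewise part of \autoref{ass:main ass bilin setting}. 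The remaining structural input, the stochastic maximal $\rL^p_\kappa$-regularity of the leading pair, is assumed in the sharp form $(A,B) \in \cSMR_{p,\kappa}^\bullet$, with $A \in \cL(\rX_1,\rX_0)$ and $B \in \cL(\rX_1,\gamma(\rH,\rX_{\nicefrac{1}{2}}))$. Finally, the standing hypothesis $\lambda + A \in \Hinfty(\rX_0)$ with $\rD(A) = \rX_1$, though not needed for existence itself, is invoked through \autoref{lem:rel of Hinfty with other concepts}(b) to identify $\rD(A^\theta)$ with the complex interpolation space $\rX_\theta$, so that the path space and the trace space $\rX_{1-\frac{1+\kappa}{p},p}$ of the statement coincide with those produced by the abstract theory.

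With these checks in place, \cite[Theorem~4.7]{AV:25} (equivalently \cite[Theorem~4.8]{AV:22a}) yields, for every $u_0 \in \rL^0_{\frF_0}(\Omega;\rX_{1-\frac{1+\kappa}{p},p})$, a unique $\rL_\kappa^p$-maximal solution $(u,\sigma)$ with $\sigma > 0$ a.s. Assertion~(a) — the regularity along any localizing sequence and, for $t > 0$ where the time weight becomes inessential, the instantaneous upgrade to $\rC((0,\sigma);\rX_{1-\frac{1}{p},p})$ — is precisely the parabolic smoothing/bootstrap part of that theorem, the case $p=2$ being its unweighted specialization. Assertions~(b) and~(c), the qualitative statement on the set $\{u_0 = v_0\}$ and the quantitative local continuity with the stopping times $\sigma_0, \sigma_1, \tau_0$ and the three probabilistic bounds, are the content of the continuous-dependence results in \cite[Theorem~4.8]{AV:25} (see also \cite[Section~4]{AV:22a}) under the same hypotheses.

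The only genuine obstacle I foresee is the careful bookkeeping of parameter ranges and interpolation-space identifications: one must confirm that $\tfrac{1+\kappa}{p} \le 2(1-\beta)$ with $\beta < 1$ keeps $(p,\kappa)$ in the admissible region of \autoref{def:stoch max reg}, that the weight exponent $\kappa$ is compatible with the trace embedding $\rL^p(0,T,w_\kappa;\rX_1) \cap \bigcap_{\theta < 1/2}\rH^{\theta,p}(0,T,w_\kappa;\rX_{1-\theta}) \hookrightarrow \rC([0,T];\rX_{1-\frac{1+\kappa}{p},p})$ built into the path space, and that in the critical case the solution map remains well-defined on the larger class $\rL^0_{\frF_0}$. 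None of this needs new ideas beyond \cite{AV:22a,AV:25}; the content specific to the present paper — the bounded $\Hinfty$-calculus of the fluid-structure operator and the bilinear bound $G \colon \rX_\beta \times \rX_\beta \to \rX_0$ — enters only through the verification of \autoref{ass:main ass bilin setting}, carried out elsewhere in the article.
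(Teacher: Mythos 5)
Your proposal is correct and follows exactly the route the paper takes: the proposition is deduced from \cite[Section~4]{AV:22a} and \cite[Theorems~4.7 and~4.8]{AV:25} by observing that the bilinear structure of $F$ gives the Lipschitz estimate $\| F(u) - F(v) \|_{\rX_0} \le C(\| u \|_{\rX_\beta} + \| v \|_{\rX_\beta}) \| u - v \|_{\rX_\beta}$, placing the problem in the framework of \cite[Assumption~4.1]{AV:25} with $\rho = 1$, after which the parameter conditions of \autoref{ass:main ass bilin setting} match those required there. Your polarization identity and parameter bookkeeping are, if anything, slightly more explicit than the paper's own justification.
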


A few words on the meaning of the assertion of~(c) are in order.
We observe that the stopping time $\tau_0$ depends on $(u_0,v_0)$.
The first two estimates somewhat show the continuous dependence of the solution~$(u,\sigma)$ on the initial data $u_0$.
On the other hand, the last estimate in~(c) gives a measure of the time interval on which the continuity estimates are valid.
Note that the right-hand side only depends on~$v_0$ but not on $v$, and $\{\tau_0 \le t\}$ has a small probability if $t$ is close to zero and $v_0$ is close to $u_0$.

We conclude this section by providing blow-up criteria for $\rL_\kappa^p$-maximal solutions $(u,\sigma)$ to \eqref{eq:bilin SPDE}.
A blow-up means that $\bP(\sigma < \infty) > 0$, and a blow-up criterion can be rephrased as $\bP(\sigma < \infty, \, u \in \cC_0) = 0$, where $\cC_0$ encodes certain properties of $u$.
Typically, this corresponds to $u$ belonging to a certain function space or the existence of specific limits as time approaches the stopping time $\sigma$.
The following result can be found in \cite[Section~4]{AV:22b}, see also \cite[Theorem~5.1 and 5.2]{AV:25}.
We distinguish between the critical case and the subcritical case

\begin{prop}\label{prop:blow-up crit bilin SPDE}
Assume that the conditions from \autoref{prop:loc ex result bilin SPDE} are satisfied for $p \in [2,\infty)$ as well as~$\kappa \in [0,\nicefrac{p}{2}-1) \cup \{0\}$, and consider the $\rL_\kappa^p$-maximal solution $(u,\sigma)$ to \eqref{eq:bilin SPDE}.
\begin{enumerate}[(a)]
    \item In the critical regime $\frac{1+\kappa}{p} = 2(1-\beta)$, it holds that $\bP\bigl(\sigma < \infty, \, \lim_{t \nearrow \sigma} u(t) \text{ exists in } \rX_{1-\frac{1+\kappa}{p},p}\bigr) = 0$ and $\bP\bigl(\sigma < \infty, \, \sup_{t \in [0,\sigma)} \| u(t) \|_{\rX_{1-\frac{1+\kappa}{p},p}} + \| u \|_{\rL^p(0,\sigma;\rX_{1-\frac{\kappa}{p}})}\bigr) = 0$.
    \item In the subcritical case $\frac{1+\kappa}{p} < 2(1-\beta)$, we have $\bP\bigl(\sigma < \infty, \, \sup_{t \in [0,\sigma)} \| u(t) \|_{\rX_{1-\frac{1+\kappa}{p},p}} < \infty\bigr) = 0$.
\end{enumerate}
\end{prop}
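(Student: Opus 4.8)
The plan is to deduce both assertions directly from the abstract blow-up theory for stochastic evolution equations in critical spaces of Agresti and Veraar, namely \cite[Section~4]{AV:22b} together with its reformulation in \cite[Theorems~5.1 and~5.2]{AV:25}. All the structural hypotheses needed there have already been checked in the discussion preceding \autoref{prop:loc ex result bilin SPDE}: by assumption $(A,B) \in \cSMR_{p,\kappa}^\bullet$, so the pair has stochastic maximal $\rL_\kappa^p$-regularity with the trace embedding into $\rC([0,T];\rX_{1-\frac{1+\kappa}{p},p})$; the space $\rX_0$ is a UMD space of type~$2$; and the boundedness and bilinearity of $G \colon \rX_\beta \times \rX_\beta \to \rX_0$ yields, for $u$, $v \in \rX_\beta$,
\begin{equation*}
    \| F(u) - F(v) \|_{\rX_0} \le C\bigl(\| u \|_{\rX_\beta} + \| v \|_{\rX_\beta}\bigr) \| u - v \|_{\rX_\beta},
\end{equation*}
so that we sit inside the framework of \cite[Assumption~4.1]{AV:25} with a single nonlinear term of order $\rho = 1$ acting on $\rX_\beta$. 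Moreover, the admissibility constraint $\frac{1+\kappa}{p} \le 2(1-\beta)$ from \autoref{ass:main ass bilin setting} is precisely the one imposed in that framework, with equality singling out the \emph{critical} couple $(p,\kappa)$.

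I would first treat the \emph{subcritical} case~(b). Here the relevant criterion is the one phrased solely in terms of the trace space: on $\{\sigma < \infty\}$, a uniform bound $\sup_{t \in [0,\sigma)} \| u(t) \|_{\rX_{1-\frac{1+\kappa}{p},p}} < \infty$ contradicts $\rL_\kappa^p$-maximality of $(u,\sigma)$. This is the subcritical blow-up result of \cite[Section~4]{AV:22b}, see also \cite[Theorem~5.2]{AV:25}, whose hypotheses reduce exactly to those listed above. The mechanism is that in the subcritical regime the nonlinearity is strictly sub-scaling on $\rX_{1-\frac{1+\kappa}{p},p}$, so on the event in question one can solve the fixed-point problem underlying \autoref{prop:loc ex result bilin SPDE} on a time interval of uniformly positive (random) length starting from times arbitrarily close to $\sigma$, which would extend the solution past $\sigma$; inserting the present values of $(p,\kappa,\beta)$ then gives the stated identity.

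For the \emph{critical} case~(a), $\frac{1+\kappa}{p} = 2(1-\beta)$, I would invoke the critical blow-up result of \cite[Section~4]{AV:22b}, see also \cite[Theorem~5.1]{AV:25}, which provides the two statements simultaneously: on $\{\sigma < \infty\}$ the limit $\lim_{t \nearrow \sigma} u(t)$ a.\ s.\ fails to exist in $\rX_{1-\frac{1+\kappa}{p},p}$, and the combined critical norm $\sup_{t \in [0,\sigma)} \| u(t) \|_{\rX_{1-\frac{1+\kappa}{p},p}} + \| u \|_{\rL^p(0,\sigma;\rX_{1-\frac{\kappa}{p}})}$ a.\ s.\ fails to be finite there. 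The underlying argument is the genuinely critical one: there is $\eps > 0$ such that whenever the critical norm of $u$ on an initial stretch lies below $\eps$ the solution continues by a fixed deterministic amount of time, and finiteness of the critical norm (or existence of the trace-space limit) on $\{\sigma < \infty\}$ then forces, via a stopping-time covering argument, that this smallness occurs arbitrarily close to $\sigma$, contradicting maximality. The only nontrivial bookkeeping, and the step I expect to be the main obstacle, is to match the time-space norm appearing in \cite{AV:22b, AV:25} — a weighted $\rL^p$-in-time norm on an interpolation space dictated by the scaling — with the unweighted space $\rL^p(0,\sigma;\rX_{1-\frac{\kappa}{p}})$ in the statement; this identification is forced by the criticality relation $\frac{1+\kappa}{p} = 2(1-\beta)$ together with the standard behaviour of the weight $w_\kappa$ under the relevant embeddings in time. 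Once this parameter translation is carried out, both parts follow directly from the cited theorems.
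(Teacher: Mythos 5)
Your proposal is correct and follows essentially the same route as the paper: the proposition is obtained by citing \cite[Section~4]{AV:22b} (see also \cite[Theorems~5.1 and~5.2]{AV:25}) after verifying that the bilinear nonlinearity places the problem in the framework of \cite[Assumption~4.1]{AV:25} with $\rho = 1$, exactly as checked before \autoref{prop:loc ex result bilin SPDE}. The norm-matching point you flag in part~(a) is resolved in the paper by the weighted Sobolev embedding $\rH_\loc^{\frac{\kappa}{p},p}([0,\sigma),w_\kappa;\rX_{1-\frac{\kappa}{p}}) \hookrightarrow \rL_\loc^p([0,\sigma);\rX_{1-\frac{\kappa}{p}})$ from \cite[Proposition~2.7]{AV:22a}.
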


Let us observe that the norms in the second estimate in~(a) are well-defined thanks to the weighted Sobolev embedding $\rH_\loc^{\frac{\kappa}{p},p}([0,\sigma),w_\kappa;\rX_{1-\frac{\kappa}{p}}) \hookrightarrow \rL_\loc^p([0,\sigma);\rX_{1-\frac{\kappa}{p}})$ a.s., see \cite[Proposition~2.7]{AV:22a}.

\section{Main results}\label{sec:main results}

In this section, we present the main results on the analysis of the stochastic fluid-rigid body interaction problem.
In fact, we state the local strong well-posedness of \eqref{eq:stoch forced fsi problem intro}, including the continuous dependence on the initial data, and we also give blow-up criteria.

Before asserting the main results, we require some preparation in terms of notation.
First, we define function spaces that turn out to be the real and complex interpolation spaces in the present setting.
For this purpose, let us also remind the space
$\rL_\sigma^q(\cF_0) \coloneqq \overline{\{f \in \rC_\mathrm{c}^\infty(\cF_0)^3 : \mdiv f = 0\}}^{\| \cdot \|_{\rL^q}}$ of solenoidal vector fields as well as the associated Helmholtz projection $\cP$.
For more details, see \autoref{lem:props Helmholtz and Stokes}.

Next, we introduce the spaces $\rX_0$ and $\rX_1$ given by
\begin{equation*}
    \begin{aligned}
        \rX_0 
        &\coloneqq \{(v,\ell,\omega) \in \rL^q(\cF_0)^3 \times \R^3 \times \R^3 : \mdiv v = 0 \tin \cF_0, \, v \cdot n = (\ell + \omega \times y) \cdot n \ton \del \cS_0\} \tand\\
        \rX_1 
        &\coloneqq \left\{(v,\ell,\omega) \in \rW^{2,q}(\cF_0)^3 \times \R^3 \times \R^3 : \mdiv v = 0 \tin \cF_0, \, v = \ell + \omega \times y \ton \del \cS_0\right\}.
    \end{aligned}
\end{equation*}
We provide the space 
\begin{equation}\label{eq:X_1/2}
    \rX_{\nicefrac{1}{2}} = [\rX_0,\rX_1]_{\nicefrac{1}{2}} = \bigl\{(v,\ell,\omega) \in \rH^{1,q}(\cF_0)^3 \times \R^3 \times \R^3 : \mdiv v = 0 \tin \cF_0, \, v = \ell + \omega \times y \ton \del \cS_0\bigr\}.
\end{equation}
\begin{defn}\label{def:function spaces}
Denote by $w$ the principle variable, i.e., $w = (v,\ell,\omega)$.
For $s \ge 0$ and $q \in (1,\infty)$, we set
\begin{equation*}
    \rH_{c,\sigma}^{s,q}(\cF_0) \coloneqq
    \left\{
    \begin{aligned}
        &\{w \in \rH^{s,q}(\cF_0)^3 \times \R^3 \times \R^3 : \mdiv v = 0 \tin \cF_0, \, v = \ell + \omega \times y \ton \del \cS_0\}, &&s > \frac{1}{q},\\
        &[\rX_0,\rX_{\nicefrac{1}{2}}]_{\nicefrac{1}{q}}, &&s = \frac{1}{q},\\
        &\{w \in \rH^{s,q}(\cF_0)^3 \times \R^3 \times \R^3 : \mdiv v = 0 \tin \cF_0, \, v \cdot n = (\ell + \omega \times y) \cdot n \ton \del \cS_0\}, &&s < \frac{1}{q}.
    \end{aligned}
    \right.
\end{equation*}
Likewise, for $s > 0$ and $p$, $q \in (1,\infty)$, we define
\begin{equation*}
    \rB_{qp,c,\sigma}^{s}(\cF_0) \coloneqq
    \left\{
    \begin{aligned}
        &\{w \in \rB_{qp}^{s}(\cF_0)^3 \times \R^3 \times \R^3 : \mdiv v = 0 \tin \cF_0, \, v = \ell + \omega \times y \ton \del \cS_0\}, &&s > \frac{1}{q},\\
        &(\rX_0,\rX_{\nicefrac{1}{2}})_{\nicefrac{1}{q},p}, &&s = \frac{1}{q},\\
        &\{w \in \rB_{qp}^{s}(\cF_0)^3 \times \R^3 \times \R^3 : \mdiv v = 0 \tin \cF_0, \, v \cdot n = (\ell + \omega \times y) \cdot n \ton \del \cS_0\}, &&s < \frac{1}{q}.
    \end{aligned}
    \right.
\end{equation*}
\end{defn}
The lemma below asserts that the complex and real interpolation spaces of $\rX_0$ and $\rX_1$ indeed coincide with the spaces from \autoref{def:function spaces}.
The proof can be found in \autoref{sec:proof main results}.

\begin{lem}\label{lem:interpol spaces}
Let $\theta \in (0,1)$ and $p$, $q \in (1,\infty)$.
Then $[\rX_0,\rX_1]_\theta = \rH_{c,\sigma}^{2 \theta,q}(\cF_0)$ and $(\rX_0,\rX_1)_{\theta,p} = \rB_{qp,c,\sigma}^{2 \theta}(\cF_0)$, where $\rH_{c,\sigma}^{2 \theta,q}(\cF_0)$ and $\rB_{qp,c,\sigma}^{2 \theta}(\cF_0)$ are as made precise in \autoref{def:function spaces}.
\end{lem}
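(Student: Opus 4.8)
\emph{The plan.} The plan is to decouple the fluid-rigid body interaction by subtracting a divergence-free lift of the rigid velocity, so that the assertion reduces to identifying the complex and real interpolation spaces of the solenoidal space $\rL_\sigma^q(\cF_0)$ and the domain of the Dirichlet Stokes operator on $\cF_0$; these are governed by classical results on fractional powers of the Stokes operator. The exponent $\theta = \nicefrac{1}{(2q)}$, at which the trace of $v$ on $\del\cS_0$ degenerates, I would treat separately via the reiteration theorem, exploiting that $\rH_{c,\sigma}^{1/q,q}(\cF_0)$ and $\rB_{qp,c,\sigma}^{1/q}(\cF_0)$ are \emph{defined} as interpolation spaces in \autoref{def:function spaces}.

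\emph{Step 1: the decoupling.} First I would construct, by cutting off $\ell + \omega \times y$ near $\del\cS_0$ and correcting the resulting divergence on a bounded annulus via the Bogovski\u{\i} operator, a linear map $\cL \colon \R^3 \times \R^3 \to \rC^\infty(\overline{\cF_0})^3$ such that $\cL(\ell,\omega)$ is divergence free, compactly supported in $\overline{\cF_0}$, and equals $\ell + \omega \times y$ in a neighbourhood of $\del\cS_0$; in particular $\cL(\ell,\omega) \in \rH^{s,q}(\cF_0)^3$ for all $s \ge 0$ and $q \in (1,\infty)$. Then $\Phi(v,\ell,\omega) \coloneqq (v - \cL(\ell,\omega),\ell,\omega)$ is, for every such $s$, $q$ and likewise in the Besov scale, a bounded linear isomorphism of $\rH^{s,q}(\cF_0)^3 \times \R^3 \times \R^3$ with bounded inverse $(v,\ell,\omega) \mapsto (v + \cL(\ell,\omega),\ell,\omega)$; since $\cL(\ell,\omega)$ is solenoidal and carries the prescribed boundary values, $\Phi$ restricts to Banach space isomorphisms $\rX_0 \cong \rL_\sigma^q(\cF_0) \times \R^3 \times \R^3$ and $\rX_1 \cong \rD(\AS) \times \R^3 \times \R^3$, where $\AS = -\cP\Delta$ with $\rD(\AS) = \rW^{2,q}(\cF_0)^3 \cap \rW_0^{1,q}(\cF_0)^3 \cap \rL_\sigma^q(\cF_0)$ denotes the Dirichlet Stokes operator on $\cF_0$, and it maps $\rH_{c,\sigma}^{s,q}(\cF_0)$ and $\rB_{qp,c,\sigma}^{s}(\cF_0)$ onto the product of the corresponding homogeneous solenoidal space on $\cF_0$ with $\R^3 \times \R^3$. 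As interpolation functors commute with isomorphisms and the finite-dimensional factors interpolate trivially, it would then remain to identify $[\rL_\sigma^q(\cF_0),\rD(\AS)]_\theta$ and $(\rL_\sigma^q(\cF_0),\rD(\AS))_{\theta,p}$.

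\emph{Step 2: the decoupled identities and the endpoint.} Here I would use that, for a suitable $\lambda \ge 0$ (which does not affect interpolation spaces up to equivalent norms), $\lambda + \AS$ has a bounded $\Hinfty$-calculus on $\rL_\sigma^q(\cF_0)$, a fact classical for exterior domains (see, e.g., \cite{GGH:13,DHP:03,PS:16}). By \autoref{lem:rel of Hinfty with other concepts}(b) this yields $[\rL_\sigma^q(\cF_0),\rD(\AS)]_\theta = \rD((\lambda+\AS)^\theta)$, and, since bounded $\Hinfty$-calculus entails bounded imaginary powers, the corresponding real interpolation spaces $(\rL_\sigma^q(\cF_0),\rD(\AS))_{\theta,p}$ coincide with the real interpolation spaces of the fractional power domains. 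For $2\theta \ne \nicefrac{1}{q}$, the solenoidal analogue of Seeley's theorem on fractional powers of the Stokes operator identifies these, within $\theta \in (0,1)$, with $\{v \in \rH^{2\theta,q}(\cF_0)^3 : \mdiv v = 0, \, v \cdot n = 0 \text{ on } \del\cS_0\}$ (respectively its $\rB_{qp}^{2\theta}$-version) if $2\theta < \nicefrac{1}{q}$, and with $\{v \in \rH^{2\theta,q}(\cF_0)^3 : \mdiv v = 0, \, v = 0 \text{ on } \del\cS_0\}$ (respectively its $\rB_{qp}^{2\theta}$-version) if $2\theta > \nicefrac{1}{q}$. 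By \autoref{def:function spaces} these are precisely the $\Phi$-images of $\rH_{c,\sigma}^{2\theta,q}(\cF_0)$ and $\rB_{qp,c,\sigma}^{2\theta}(\cF_0)$, which settles all $\theta \ne \nicefrac{1}{(2q)}$. Finally, for $\theta = \nicefrac{1}{(2q)}$ I would combine $\rX_{\nicefrac{1}{2}} = [\rX_0,\rX_1]_{\nicefrac{1}{2}}$ from \eqref{eq:X_1/2} with $\rH_{c,\sigma}^{1/q,q}(\cF_0) = [\rX_0,\rX_{\nicefrac{1}{2}}]_{\nicefrac{1}{q}}$ and $\rB_{qp,c,\sigma}^{1/q}(\cF_0) = (\rX_0,\rX_{\nicefrac{1}{2}})_{\nicefrac{1}{q},p}$ from \autoref{def:function spaces}; the reiteration theorem for the complex and for the real method then gives $[\rX_0,[\rX_0,\rX_1]_{\nicefrac{1}{2}}]_{\nicefrac{1}{q}} = [\rX_0,\rX_1]_{\nicefrac{1}{(2q)}}$ and $(\rX_0,[\rX_0,\rX_1]_{\nicefrac{1}{2}})_{\nicefrac{1}{q},p} = (\rX_0,\rX_1)_{\nicefrac{1}{(2q)},p}$, which closes the argument.

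\emph{The main obstacle.} The hard part is Step 2, namely transferring the Seeley/Giga-type characterization of the fractional power domains of the Stokes operator to the exterior domain $\cF_0$ uniformly over $\theta \in (0,1) \setminus \{\nicefrac{1}{(2q)}\}$ --- in particular pinning down the exact boundary condition on each side of the threshold $2\theta = \nicefrac{1}{q}$, and, on the real-interpolation side, passing correctly from the fractional power scale to the Besov scale. The decoupling of Step 1 and the endpoint reiteration are, by contrast, routine once the lift $\cL$ is at hand.
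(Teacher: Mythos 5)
Your proposal is correct in its overall architecture and matches the paper's two-step strategy (decouple the rigid-body components, then identify the decoupled interpolation spaces), but both steps are implemented differently. For the decoupling, you build a compactly supported divergence-free Bogovski\u{\i}-type lift $\cL(\ell,\omega)$, whereas the paper reuses the similarity transform $S$ from the proof of \autoref{thm:bdd H00-calculus of fluid-structure op incompr Newtonian}, whose off-diagonal entry is $\cP D_{\lambda_0,u}$ built from the stationary Stokes problem \eqref{eq:stat problem ext}; both yield $\rX_i \cong \rY_i^u \times \C^3\times\C^3$, and your lift has the minor advantage of being smooth and compactly supported, so either works. The substantive divergence is in the second step: you invoke a ``solenoidal Seeley/Giga-type theorem'' for the fractional power domains of the Stokes operator on the exterior domain $\cF_0$, across both sides of the threshold $2\theta=\nicefrac{1}{q}$ and in the Besov scale, as a citable black box, and you yourself flag this as the main obstacle. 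The paper does not cite such a result; it \emph{proves} the needed identification by constructing the projection $\tQ u = A_1^{-1}\cP \DeltaD^1 u$ satisfying $\tQ\,\rD(\DeltaD) = \rD(\DeltaD)\cap\rR(\tQ)$ (following \cite{PW:18}), and then applying Triebel's theorem on interpolation of complemented subspaces together with the bounded imaginary powers of the shifted Dirichlet Laplacian, thereby reducing everything to the classical scalar scales $\rD((\DeltaD^1)^\theta)$. This is worth knowing because the exterior-domain, all-$\theta$, Besov-scale version of the Stokes ``Seeley theorem'' you want is in the literature essentially only \emph{via} this complemented-subspace argument, so your Step~2 as written defers rather than closes the hard part; supplying the $\tQ$ construction would make it complete. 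Your reiteration treatment of the endpoint $\theta=\nicefrac{1}{2q}$ is fine and consistent with \autoref{def:function spaces} (the paper leaves this implicit).
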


We are now in the position to make precise the noise terms in \eqref{eq:stoch forced fsi problem intro}.
Indeed, let $(W_t^n)_{n \ge 1}$ be a family of standard independent Brownian motions, and consider $b = (b_n)_{n \ge 1} \colon \cF_0 \to \ell^2$ measurable and bounded as well as~$f = (f_n)_{n \ge 1} = ((f_v^n,f_\ell^n,f_\omega^n)^\top)_{n \ge 1} \in \rL_{\frP}^p(\Omega;\rL^p(I_T,w_\kappa;\gamma(\ell^2,\rX_{\nicefrac{1}{2}})))$, where we recall $\rX_{\nicefrac{1}{2}}$ from~\eqref{eq:X_1/2} and the definition of the space $\rL_{\frP}^p(\Omega;\rL^p(I_T,w_\kappa;\gamma(\rH,\rX_{\nicefrac{1}{2}})))$ from \autoref{ssec:stoch max reg and semilin SPDEs}.

Below, we assert the first main result on the local strong well-posedness of \eqref{eq:stoch forced fsi problem intro}.
For convenience, we state the cases $p > 2$ and $p = 2$ separately, and we start with the former case.

\begin{thm}\label{thm:loc strong wp stoch fsi}
Let $p \in (2,\infty)$, $q \in [2,\infty)$ and $\kappa \in [0,\frac{p}{2}-1)$, and recall $\rX_{1-\frac{1+\kappa}{p},p}$ from \autoref{lem:interpol spaces}.
Then there is $\eps > 0$ so that if $\| (b_n)_{n \ge 1} \|_{\rW^{1,\infty}(\cF_0;\ell^2)} < \eps$, then for all $w_0 = (\cP v_0,\ell_0,\omega_0) \in \rL_{\frF_0}^0(\Omega;\rX_{1-\frac{1+\kappa}{p},p})$, there exists an $\rL_\kappa^p$-maximal solution $(w,\sigma)$ to \eqref{eq:stoch forced fsi problem intro}, with $\sigma > 0$ a.s..
Moreover,
\begin{enumerate}[(a)]
    \item for each localizing sequence $(\sigma_n)_{n \ge 1}$, and for all $n \ge 1$ and $\theta \in [0,\frac{1}{2})$, we have
    \begin{equation*}
        w \in \rH^{\theta,p}(0,\sigma_n,w_\kappa;\rX_{1-\theta}) \cap \rC([0,\sigma_n];\rX_{1-\frac{1+\kappa}{p},p}) \enspace \text{a.s.},
    \end{equation*}
    and the solution regularizes instantaneously, i.e.,
    \begin{equation*}
        w \in \rH^{\theta,p}((0,\sigma);\rX_{1-\theta}) \cap \rC((0,\sigma);\rX_{1-\frac{1}{p},p}) \enspace \text{a.s.};
    \end{equation*}
    \item for a strongly $\frF_0$-measurable initial datum $\Tilde{w}_0 \colon \Omega \to \rX_{1-\frac{1+\kappa}{p},p}$, denoting by $(\Tilde{w},\tau)$ the resulting $\rL_\kappa^p$-maximal solution to \eqref{eq:stoch forced fsi problem intro}, a.s.\ on $\{w_0 = \Tilde{w}_0\}$, we obtain $\tau = \sigma$ and $w = \Tilde{w}$ on $[0,\sigma \land \tau)$;
    \item let $w_0 \in \rL_{\frF_0}^p(\Omega;\rX_{1-\frac{1+\kappa}{p},p})$, and denote by $(w,\sigma)$ the emerging $\rL_\kappa^p$-maximal solution to \eqref{eq:stoch forced fsi problem intro}.
    Then there exist $C_0$, $T_0$, $\eps_0 > 0$ and stopping times $\sigma_0$, $\sigma_1 \in (0,\sigma]$ a.s.\ so that the following is valid.
    For each $\Tilde{w}_0 \in \rL_{\frF_0}^p(\Omega;\rX_{1-\frac{1+\kappa}{p},p})$ with $\E \| w_0 - \Tilde{w}_0 \|_{\rX_{1-\frac{1+\kappa}{p},p}}^p \le \eps_0$, the $\rL_\kappa^p$-maximal solution $(\Tilde{w},\tau)$ to \eqref{eq:stoch forced fsi problem intro} with initial data $\Tilde{w}_0$ satisfies that there exists a stopping time $\tau_0 \in (0,\tau]$ a.s.\ so that for all $t \in [0,T_0]$ and $\delta > 0$, it holds that
    \begin{equation*}
        \begin{aligned}
            \bP\Bigl(\sup_{r \in [0,t]} \| w(r) - \Tilde{w}(r) \|_{\rX_{1-\frac{1+\kappa}{p},p}} \ge \delta,\, \sigma_0 \wedge \tau_0 > t\Bigr)
            &\le \frac{C_0}{\delta^p} \cdot \E \| w_0 - \Tilde{w}_0 \|_{\rX_{1-\frac{1+\kappa}{p},p}}^p,\\
            \bP\Bigl(\| w - \Tilde{w} \|_{\rL^p(0,t,w_\kappa;\rX_1)} \ge \delta, \, \sigma_0 \wedge \tau_0 > t\Bigr)
            &\le \frac{C_0}{\delta^p} \cdot \E \| w_0 - \Tilde{w}_0 \|_{\rX_{1-\frac{1+\kappa}{p},p}}^p, \tand\\
            \bP(\sigma_0 \wedge \tau_0 \le t)
            &\le C_0 \bigl[\E \| w_0 - \Tilde{w}_0 \|_{\rX_{1-\frac{1+\kappa}{p},p}}^p + \bP(\sigma_1 \le t) \bigr].
        \end{aligned}
    \end{equation*}
\end{enumerate}
\end{thm}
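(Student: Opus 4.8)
The plan is to recast the stochastic fluid-rigid body problem \eqref{eq:stoch forced fsi problem intro} as an abstract bilinear SPDE of the form \eqref{eq:bilin SPDE} on the spaces $\rX_0$ and $\rX_1$ introduced above, and then invoke \autoref{prop:loc ex result bilin SPDE}, \autoref{prop:blow-up crit bilin SPDE} — or rather the assertions (a)--(c) from \autoref{prop:loc ex result bilin SPDE} directly. The heart of the matter is producing the operator-theoretic input needed to apply these abstract results. First I would introduce the \emph{fluid-structure operator} $\cA$ acting on $\rX_0$ with domain $\rX_1$, obtained by projecting the coupled Stokes-type system (with the boundary/body coupling encoded in $\rD(\cA) = \rX_1$) via the Helmholtz projection $\cP$; this removes the pressure $\pi$. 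The transport noise term $(b_n \cdot \nabla)v$ gives the operator $B$, while the additive noise $f = (f_v^n,f_\ell^n,f_\omega^n)$ becomes $g$; the convective term $-((v-\ell)\cdot\nabla)v$ (after projection) becomes the bilinear nonlinearity $F(w) = G(w,w)$.

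\textbf{Key steps.} \emph{(i) $\Hinfty$-calculus of $\cA$.} By the decoupling/similarity-transform argument announced in the introduction (carried out in \autoref{sec:bdd Hoo-calculus fluid-structure operator}), one shows that there is $\lambda \ge 0$ with $\lambda + \cA \in \Hinfty(\rX_0)$ and $\phi_{\lambda+\cA}^\infty < \nicefrac{\pi}{2}$. This uses \autoref{lem:preservation of results under sim trafe}, \autoref{prop:op theoret props of block op matrices} to pass to the diagonal-domain operator, a study of the weak Neumann problem with rough data, and the perturbation result \autoref{prop:pert of Hinfty-calculus}. \emph{(ii) Stochastic maximal regularity.} Since $q \in [2,\infty)$ and $\rX_0$ is (isomorphic to a closed subspace of) an $\rL^q$-space, \autoref{prop:stoch max reg via Hinfty subset of Lq} yields $(\cA,0) \in \cSMR_{p,\kappa}^\bullet$ for all $p \in (2,\infty)$ and $\kappa \in [0,\nicefrac{p}{2}-1)$. \emph{(iii) Adding transport noise.} One checks that $B \colon \rX_1 \to \gamma(\ell^2,\rX_{\nicefrac{1}{2}})$ is bounded and, via the Gagliardo--Nirenberg-type estimate $\| (b\cdot\nabla)v \|_{\rH^{1,q}} \le C_B \| v \|_{\rW^{2,q}} + L_B \| v \|_{\rL^q}$ with $C_B$ controlled by $\| (b_n)_{n\ge1} \|_{\rW^{1,\infty}(\cF_0;\ell^2)}$, applies \autoref{lem:pert SMR} to conclude $(\cA, B) \in \cSMR_{p,\kappa}^\bullet$ once $\eps$ is small enough. \emph{(iv) Verifying \autoref{ass:main ass bilin setting}.} Using \autoref{lem:interpol spaces}, one identifies $\rX_\beta$ with a space $\rH_{c,\sigma}^{2\beta,q}(\cF_0)$ and checks that the convective term defines a bounded bilinear map $G \colon \rX_\beta \times \rX_\beta \to \rX_0$ for a suitable $\beta \in (1-\frac{1+\kappa}{p},1)$ with $\frac{1+\kappa}{p}\le 2(1-\beta)$; this is the scaling/Sobolev-embedding computation (Navier--Stokes-type, adapted to the exterior domain and the body variables, with the estimate postponed to \autoref{sec:proof main results}). \emph{(v) Conclusion.} Apply \autoref{prop:loc ex result bilin SPDE}: existence of the $\rL_\kappa^p$-maximal solution $(w,\sigma)$ with $\sigma > 0$ a.s., and properties (a)--(c) transfer verbatim, using $\rX_{1-\frac{1+\kappa}{p},p} = \rB_{qp,c,\sigma}^{2-\frac{2(1+\kappa)}{p}}(\cF_0)$ from \autoref{lem:interpol spaces} and $\rX_1 = \rH_{c,\sigma}^{2,q}(\cF_0)$. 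The instantaneous regularization in (a) is exactly the parabolic smoothing part of \autoref{prop:loc ex result bilin SPDE}(a).

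\textbf{Main obstacle.} The principal difficulty is step (i): establishing the bounded $\Hinfty$-calculus of $\cA$ on $\rX_0$ despite the \emph{non-diagonal domain} $\rX_1$, in which the fluid and body velocities are coupled through the no-slip condition $v = \ell + \omega \times y$ on $\del\cS_0$. The similarity transform that diagonalizes the domain produces an operator of more complicated form, and handling its lower-order (pressure-related) terms requires the weak Neumann problem with less regular data together with the delicate perturbation theory of \autoref{prop:pert of Hinfty-calculus} (recall that the $\Hinfty$-calculus is not stable under relatively bounded perturbations, cf.\ \cite{McIY:90}). A secondary technical point is the bilinear estimate in (iv): one must ensure the exterior-domain function spaces $\rH_{c,\sigma}^{s,q}(\cF_0)$ from \autoref{def:function spaces} satisfy the pointwise multiplication and embedding properties needed to make $G$ bounded on $\rX_\beta\times\rX_\beta$ with the correct subcritical (or critical) parameter relation, accounting for the extra finite-dimensional $(\ell,\omega)$-components and the transformation term $-(\ell\cdot\nabla)v$. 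Once these two ingredients are in place, \autoref{thm:loc strong wp stoch fsi} follows by a direct appeal to the abstract machinery.
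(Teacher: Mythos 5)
Your proposal is correct and follows essentially the same route as the paper: reformulation via the fluid-structure operator, bounded $\Hinfty$-calculus through the decoupling similarity transform and perturbation theory, stochastic maximal regularity plus the small-transport-noise perturbation, the bilinear Sobolev-embedding estimate with the (sub)critical parameter check, and a final appeal to \autoref{prop:loc ex result bilin SPDE}. The only detail worth adding is the explicit verification that admissible $(p,\kappa,\beta)$ exist, split into $q\in[2,3)$ (critical, $\beta=\frac14(1+\frac3q)$) and $q\ge3$ (subcritical, $\beta>\frac12$), which the paper carries out arithmetically.
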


Let us briefly comment on the assertions of \autoref{thm:loc strong wp stoch fsi}.
In~(a), we discuss the spatial and temporal regularity of the solution, part~(b) provides a localization result, while~(c) can be somewhat regarded as a result on the continuous dependence on the initial data.

Next, we elaborate on the situation in the Hilbert space case.

\begin{thm}\label{thm:loc strong wp stoch fsi Hilbert space}
If $\| (b_n)_{n \ge 1} \|_{\rW^{1,\infty}(\cF_0;\ell^2)} < \eps$ for sufficiently small $\eps > 0$, then for all strongly $\frF_0$-measurable $w_0 = (\cP v_0,\ell_0,\omega_0)$ with
\begin{equation*}
    w_0 \in \rX_{\nicefrac{1}{2}} = \{(\cP v_0,\ell_0,\omega_0) \in \rH^1(\cF_0)^3 \cap \rL_\sigma^2(\cF_0) \times \R^3 \times \R^3 : v_0 = \ell_0 + \omega_0 \times y \ton \del \cS_0\},
\end{equation*}
there is an $\rL^2$-maximal solution $(w,\sigma)$ to \eqref{eq:stoch forced fsi problem intro} such that $\sigma > 0$ a.s.\ holds true.
Furthermore, recalling
\begin{equation*}
    \rX_1 = \{(\cP v,\ell,\omega) \in \rH^2(\cF_0)^3 \cap \rL_\sigma^2(\cF_0) \times \R^3 \times \R^3 : v = \ell + \omega \times y \ton \del \cS_0\},
\end{equation*}
for every localizing sequence $(\sigma_n)_{n \ge 1}$ and $n \ge 1$, we get $w \in \rL^2(0,\sigma_n;\rX_1) \cap \rC([0,\sigma_n];\rX_{\frac{1}{2}})$ a.s..
In addition, replacing $\rX_{1-\frac{1+\kappa}{p},p}$ by the above $\rX_{\nicefrac{1}{2}}$, similar assertions as in \autoref{thm:loc strong wp stoch fsi}(b) and~(c) are valid.
\end{thm}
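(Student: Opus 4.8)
\textbf{Proof plan for \autoref{thm:loc strong wp stoch fsi Hilbert space}.}
The strategy is to fit the stochastic fluid-rigid body interaction problem \eqref{eq:stoch forced fsi problem intro} into the abstract framework of \autoref{prop:loc ex result bilin SPDE} in the Hilbert space setting $p = 2$, $\kappa = 0$, $q = 2$. First I would reformulate \eqref{eq:stoch forced fsi problem intro} as a bilinear SPDE of the form \eqref{eq:bilin SPDE} on $\rX_0$. Applying the Helmholtz projection $\cP$ (extended to incorporate the rigid body components) eliminates the pressure $\pi$ and absorbs the boundary coupling into the domain of the resulting operator: one sets $A$ to be the \emph{fluid-structure operator} acting on $\rX_1$, so that $A \in \cL(\rX_1,\rX_0)$ and $\rD(A) = \rX_1$. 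The transport-noise term $\sum_n (b_n \cdot \nabla) v \rd W_t^n$ becomes the leading noise operator $B \in \cL(\rX_1,\gamma(\ell^2,\rX_{\nicefrac{1}{2}}))$, the additive forcing $f = (f_v^n,f_\ell^n,f_\omega^n)$ becomes $g$, and the convective term $-((v-\ell)\cdot\nabla)v$ together with the transformation term becomes the bilinear nonlinearity $F(w) = G(w,w)$.

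The key analytic inputs, all already available in this paper, are: (i) the bounded $\Hinfty$-calculus of the fluid-structure operator with $\Hinfty$-angle $< \pi/2$ (established in \autoref{sec:bdd Hoo-calculus fluid-structure operator}), which via \autoref{prop:stoch max reg via Hinfty subset of Lq} with $q = 2$ yields $(A,0) \in \cSMR_{2,0}^\bullet$; (ii) the perturbation result \autoref{lem:pert SMR}, which upgrades this to $(A,B) \in \cSMR_{2,0}^\bullet$ provided the transport coefficient satisfies the smallness bound $\| (b_n)_n \|_{\rW^{1,\infty}(\cF_0;\ell^2)} < \eps$ — here one must check that $B_0 x := \sum_n (b_n \cdot \nabla) v\, h_n$ defines an element of $\cL(\rX_1,\gamma(\ell^2,\rX_{\nicefrac{1}{2}}))$ obeying $\| B_0 x \|_{\gamma(\ell^2,\rX_{\nicefrac{1}{2}})} \le C_B \| x \|_{\rX_1} + L_B \| x \|_{\rX_0}$ with $C_B$ small, which follows from the product rule and the characterization $\gamma(\ell^2,\rH^{s,2}) = \rH^{s,2}(\cF_0;\ell^2)$ together with interpolation; and (iii) the identification of the fractional power domains $\rX_\theta = [\rX_0,\rX_1]_\theta$ from \autoref{lem:rel of Hinfty with other concepts}(b) and \autoref{lem:interpol spaces}, which is needed to verify \autoref{ass:main ass bilin setting}. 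For $p = 2$, $\kappa = 0$, $q = 2$ the relevant exponent in \autoref{ass:main ass bilin setting} is $\beta \in (\nicefrac{1}{2},1)$ with $\nicefrac{1}{2} \le 2(1-\beta)$, i.e.\ $\beta \le \nicefrac{3}{4}$; one chooses $\beta = \nicefrac{3}{4}$ so that $\rX_\beta = \rH_{c,\sigma}^{\nicefrac{3}{2},2}(\cF_0)$, and verifies that the bilinear map $G(w_1,w_2) = -\cP[((v_1-\ell_1)\cdot\nabla)v_2 + \text{(transform terms)}]$ is bounded $\rX_{\nicefrac{3}{4}} \times \rX_{\nicefrac{3}{4}} \to \rX_0$. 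This is the Navier-Stokes-type estimate $\| (w_1 \cdot \nabla) w_2 \|_{\rL^2} \lesssim \| w_1 \|_{\rH^{\nicefrac{3}{2}}} \| w_2 \|_{\rH^{\nicefrac{3}{2}}}$, which follows from Sobolev embedding $\rH^{\nicefrac{3}{2}}(\cF_0) \hookrightarrow \rL^\infty(\cF_0)$ in 3D and $\rH^{\nicefrac{3}{2}} \hookrightarrow \rH^1$, the rigid-body components being finite-dimensional and harmless.

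Once \autoref{ass:main ass bilin setting} and $(A,B) \in \cSMR_{2,0}^\bullet$ are in place, \autoref{prop:loc ex result bilin SPDE} applies verbatim with $\rX_{1-\frac{1+\kappa}{p},p}$ replaced by $\rX_{\nicefrac{1}{2}}$ (since for $p = 2$, $\kappa = 0$ the trace space is $\rX_{\nicefrac{1}{2}}$, as recorded after \autoref{ass:main ass bilin setting}), giving the existence of an $\rL^2$-maximal solution $(w,\sigma)$ with $\sigma > 0$ a.s., the regularity $w \in \rL^2(0,\sigma_n;\rX_1) \cap \rC([0,\sigma_n];\rX_{\nicefrac{1}{2}})$ from part~(a), and the localization and continuous-dependence statements from parts~(b) and~(c). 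It remains only to match the explicit descriptions of $\rX_{\nicefrac{1}{2}}$ and $\rX_1$ in the statement with \eqref{eq:X_1/2} and the definition of $\rX_1$; here one also uses $\rL_\sigma^2(\cF_0)$ and the Helmholtz projection as in \autoref{def:function spaces} and \autoref{lem:interpol spaces} with $q = 2$.

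\textbf{Main obstacle.} The genuinely new work lies not in this theorem — given the machinery it is essentially a specialization of \autoref{thm:loc strong wp stoch fsi} to $p = q = 2$ — but in the verification that the transport-noise perturbation $B_0$ is \emph{small} as an operator into $\gamma(\ell^2,\rX_{\nicefrac{1}{2}})$ rather than merely bounded: one needs the $\rW^{1,\infty}$-smallness of $(b_n)_n$ to control the top-order term $(b_n \cdot \nabla)\nabla v$ appearing when differentiating, while the commutator-type lower-order terms are absorbed into $L_B \| x \|_{\rX_0}$. The compatibility of $B_0 v$ with the constraints defining $\rX_{\nicefrac{1}{2}}$ (divergence-free, boundary condition) also requires a brief check, since $(b_n \cdot \nabla)v$ need not be solenoidal unless $b_n$ is, so one should either assume $\mdiv b_n = 0$ or compose with $\cP$; this is the same subtlety as in the $p > 2$ case and is handled identically.
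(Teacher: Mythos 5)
Your overall route coincides with the paper's: the paper proves this theorem simply by observing that the proof of \autoref{thm:loc strong wp stoch fsi} (verification of \autoref{ass:main ass bilin setting}, the stochastic maximal regularity of $(-\Afs,B)$ from \autoref{lem:stoch max reg of lin part}, and the bilinear estimate of \autoref{lem:nonlin est}) already covers $p=q=2$, $\kappa=0$, and then invokes \autoref{prop:loc ex result bilin SPDE}. Your treatment of the transport noise (composing with $\cP$, smallness in $\rW^{1,\infty}(\cF_0;\ell^2)$, \autoref{lem:pert SMR}) is exactly what the paper does.

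There is, however, a concrete error in your justification of the nonlinear estimate. You choose $\beta=\nicefrac{3}{4}$ and claim $\|(w_1\cdot\nabla)w_2\|_{\rL^2}\lesssim\|w_1\|_{\rH^{3/2}}\|w_2\|_{\rH^{3/2}}$ via the embedding $\rH^{3/2}(\cF_0)\hookrightarrow\rL^\infty(\cF_0)$. In three dimensions this endpoint embedding is \emph{false}: $\rH^{s,2}\hookrightarrow\rL^\infty$ requires $s>\nicefrac{3}{2}$. The bilinear bound at $\beta=\nicefrac{3}{4}$ is nevertheless true, but one must argue via H\"older as $\|v_1\|_{\rL^6}\|\nabla v_2\|_{\rL^3}$ together with $\rH^{3/2}\hookrightarrow\rH^1\hookrightarrow\rL^6$ and $\rH^{1/2}\hookrightarrow\rL^3$ — this is precisely the balanced-Sobolev-index argument of \autoref{lem:nonlin est}(a), which for $q=2$ gives boundedness for all $\beta\ge\nicefrac{5}{8}$. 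Two further remarks: first, the paper takes $\beta=\nicefrac{5}{8}$, so that $\frac{1+\kappa}{p}=\nicefrac{1}{2}<\nicefrac{3}{4}=2(1-\beta)$ and the setting is \emph{subcritical}; your choice $\beta=\nicefrac{3}{4}$ sits at the critical endpoint $\frac{1+\kappa}{p}=2(1-\beta)$, which is still admissible for \autoref{ass:main ass bilin setting} and \autoref{prop:loc ex result bilin SPDE} and hence suffices for the present theorem, but it forfeits the subcriticality that the paper exploits for the Hilbert-space blow-up criterion \autoref{thm:blow-up crit stoch fsi Hilbert space}. Second, your worry about $(b_n\cdot\nabla)v$ failing to be solenoidal is already resolved in the paper's formulation \eqref{eq:terms on RHS}, where $B$ is defined with the Helmholtz projection in front.
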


The remark below concerns the regularity of the initial data and especially discusses the criticality for a certain range of $q$.
We also comment on the smallness assumption on the transport noise.

\begin{rem}\label{rem:crit of init data}
\begin{enumerate}[(a)]
    \item For $q \in [2,3)$, we can consider the {\em critical} weight $\kappa_c = \frac{3p}{2}(1-\frac{1}{q})-1$, and the space for the initial data in \autoref{thm:loc strong wp stoch fsi} becomes $\rX_{\frac{3}{2q}-\frac{1}{2},p}$.
    For $q > 2$, we thus take into account $\frF_0$-measurable
    \begin{equation}\label{eq:init data crit weights}
        w_0 = (\cP v_0,\ell_0,\omega_0) \in \rB_{qp}^{\frac{3}{q}-1}(\cF_0)^3 \cap \rL_\sigma^q(\cF_0) \times \R^3 \times \R^3 : v_0 \cdot n = (\ell_0 + \omega_0 \times y) \cdot n \ton \del \cS_0,
    \end{equation}
    or $w_0 \in (\rX_0,\rX_{\nicefrac{1}{2}})_{\nicefrac{1}{2},p}$ in the case $q = 2$.
    For the fluid component, this aligns with the case of the Navier-Stokes equations in the strong, {\em deterministic} setting as studied in \cite[Section~3]{PW:17}.
    In \cite{AV:24b}, the range of admissible $q$'s for the stochastic Navier-Stokes equations on the torus was extended by considering a weaker setting to lower the regularity of the initial data.
    This would also be tempting here, but due to the coupled nature of the present problem, it is a priori not clear how to interpret a weak fluid-structure operator, and how to consider the interaction problem appropriately in the weak setting.
    Further investigation in this direction is left to future study.
    \item For deterministic fluid-rigid ball interaction problems, the regularity of the initial data stated in \autoref{thm:loc strong wp stoch fsi Hilbert space} matches the regularity reported in \cite[Theorem~2.2]{Takahashi:03} or \cite[Theorem~1.4]{MT:18} in the Hilbert space case.
    However, the theory of critical weights from \cite{PW:17, PSW:18}, which is applicable thanks to the bounded $\Hinfty$-calculus of the fluid-structure operator as demonstrated below in  \autoref{thm:bdd H00-calculus of fluid-structure op incompr Newtonian}, allows to consider initial data as in \eqref{eq:init data crit weights} and thus enlarges the space of admissible initial data compared to \cite{Takahashi:03, MT:18}.
    We omit the details here and only note that the result can be derived in a similar way as in \cite[Section~3]{PW:17}.
    \item It seems that the smallness assumption on the transport noise in \autoref{thm:loc strong wp stoch fsi} and \autoref{thm:loc strong wp stoch fsi Hilbert space} is indispensable at the moment.
    The consideration of non-small transport noise for 3D Navier-Stokes equations constitutes an open problem even in the setting without rigid body, see \cite[Problem~9]{AV:25}.
\end{enumerate}
\end{rem}

After addressing the well-posedness results, we now deal with the blow-up criteria.
Again, we distinguish between the case $p > 2$ and the Hilbert space case.

\begin{thm}\label{thm:blow-up crit stoch fsi}
Suppose that the assumptions from \autoref{thm:loc strong wp stoch fsi} are satisfied for $p > 2$, $q \in [2,\infty)$, and $\kappa \in [0,\nicefrac{p}{2}-1)$, and consider the maximal $\rL_\kappa^p$-solution $(w,\sigma)$ to \eqref{eq:stoch forced fsi problem intro}.
\begin{enumerate}[(a)]
    \item In the situation $q \in [2,3)$, we have $\bP\bigl(\sigma < \infty, \, \lim_{t \nearrow \sigma} w(t) \text{ exists in } \rX_{\frac{3}{2q}-\frac{1}{2},p}\bigr) = 0$ as well as $\bP\bigl(\sigma < \infty, \, \sup_{t \in [0,\sigma)} \| w(t) \|_{\rX_{\frac{3}{2q}-\frac{1}{2},p}} + \| w \|_{\rL^p(0,\sigma;\rX_{\frac{1}{p}+\frac{3}{2q}-\frac{1}{2}})}\bigr) = 0$.
    \item If $q \ge 3$, it holds that $\bP\bigl(\sigma < \infty, \, \sup_{t \in [0,\sigma)} \| w(t) \|_{\rX_{1-\frac{1+\kappa}{p},p}} < \infty\bigr) = 0$.
\end{enumerate}
\end{thm}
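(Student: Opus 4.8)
The plan is to deduce \autoref{thm:blow-up crit stoch fsi} from the abstract blow-up criteria in \autoref{prop:blow-up crit bilin SPDE} by verifying that the stochastic fluid-rigid body problem \eqref{eq:stoch forced fsi problem intro}, reformulated in operator form via the fluid-structure operator, falls within the framework of \autoref{ass:main ass bilin setting} and that \autoref{prop:loc ex result bilin SPDE} applies -- which is precisely the content of \autoref{thm:loc strong wp stoch fsi}. In more detail, I would write \eqref{eq:stoch forced fsi problem intro} as $\rd w + \cA w \rd t = F(w)\rd t + (\cB w + f)\rd W_\rH$ with $\rH = \ell^2$, where $\cA$ is the (shifted, if necessary) fluid-structure operator, $\cB$ encodes the transport noise $(b_n \cdot \nabla)$ composed with the Leray projection onto $\rX_{\nicefrac12}$, and $F(w) = -\cP[((v-\ell)\cdot\nabla)v]$ is the bilinear convection term. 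By \autoref{sec:bdd Hoo-calculus fluid-structure operator}, $\lambda + \cA \in \Hinfty(\rX_0)$ with angle $< \nicefrac\pi2$ and $\rD(\cA) = \rX_1$, and $(\cA,\cB) \in \cSMR_{p,\kappa}^\bullet$ via \autoref{prop:stoch max reg via Hinfty subset of Lq} and \autoref{lem:pert SMR} under the smallness assumption on $(b_n)_{n\ge1}$; the bilinear estimate $\|F(w)\|_{\rX_0} \lesssim \|w\|_{\rX_\beta}^2$ holds for a suitable $\beta \in (1-\frac{1+\kappa}{p},1)$ with $\frac{1+\kappa}{p} \le 2(1-\beta)$, as established among the nonlinear estimates in \autoref{sec:proof main results}. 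Thus all hypotheses of \autoref{prop:blow-up crit bilin SPDE} are met and the abstract statement transfers verbatim, with $\rX_{1-\frac{1+\kappa}{p},p} = \rB_{qp,c,\sigma}^{2-\frac{2(1+\kappa)}{p}}(\cF_0)$ and $\rX_{1-\frac\kappa p} = \rH_{c,\sigma}^{2-\frac{2\kappa}{p},q}(\cF_0)$ identified via \autoref{lem:interpol spaces}.

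The only genuine work beyond bookkeeping is to pin down the value of $\beta$ -- equivalently, the scaling exponent $2(1-\beta)$ -- and to separate the two regimes. For the convection term $(v\cdot\nabla)v$ on $\cF_0 \subset \R^3$, the natural scaling-critical exponent is $1-\frac{1+\kappa}{p} = \frac{1}{2} + \frac{3}{2}\bigl(\frac{1}{2}-\frac{1}{q}\bigr)$ at the level of $\rX_\theta$-spaces, i.e. $\beta$ is chosen so that $\rX_\beta$ embeds into the critical space for Navier-Stokes; the condition $F \colon \rX_\beta \times \rX_\beta \to \rX_0$ bounded, together with $\frac{1+\kappa}{p} \le 2(1-\beta)$, then dictates the admissible range of $(p,\kappa,q)$. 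When $q \in [2,3)$ one can reach the critical weight $\kappa_c = \frac{3p}{2}(1-\frac1q)-1$ of \autoref{rem:crit of init data} so that $(p,\kappa_c)$ is critical, $\frac{1+\kappa_c}{p} = 2(1-\beta)$, and \autoref{prop:blow-up crit bilin SPDE}(a) yields the ``limit does not exist'' criterion and the Serrin-type norm criterion, where the trace space index is $1-\frac{1+\kappa_c}{p} = \frac{3}{2q}-\frac12$ and $1-\frac{\kappa_c}{p} = \frac{1}{p}+\frac{3}{2q}-\frac12$; this matches part~(a). When $q \ge 3$ the critical weight would be non-positive (or the critical space would force $\beta$ outside $(0,1)$), so one stays in the subcritical regime $\frac{1+\kappa}{p} < 2(1-\beta)$ for any admissible $\kappa$, and \autoref{prop:blow-up crit bilin SPDE}(b) gives the boundedness criterion in $\rX_{1-\frac{1+\kappa}{p},p}$ of part~(b).

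The main obstacle, then, is not a new argument but careful verification that the bilinear boundedness $G \colon \rX_\beta \times \rX_\beta \to \rX_0$ holds with the required $\beta$ on the \emph{coupled} space $\rX_0 = \rL_{c,\sigma}^q$ rather than on plain $\rL^q_\sigma(\R^3)$ -- in particular controlling the rigid-body contribution $-(\ell\cdot\nabla)v$ and checking that the Leray projection $\cP$ and the restriction to $\cF_0$ do not destroy the Sobolev--Hölder--product estimates, so that Definition~\ref{def:function spaces} and Lemma~\ref{lem:interpol spaces} give exactly the function spaces appearing in the statement. Once that estimate is in hand (it is part of the nonlinear estimates proved for \autoref{thm:loc strong wp stoch fsi}), the proof of \autoref{thm:blow-up crit stoch fsi} is a direct application of \autoref{prop:blow-up crit bilin SPDE} together with the interpolation identities, distinguishing the cases $q \in [2,3)$ and $q \ge 3$ as above.
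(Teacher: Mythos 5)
Your proposal is correct and follows essentially the same route as the paper: both reduce the statement to \autoref{prop:blow-up crit bilin SPDE} using the verification of \autoref{ass:main ass bilin setting} already carried out for \autoref{thm:loc strong wp stoch fsi}, with the critical choice $\tfrac{1+\kappa}{p}=2(1-\beta)=\tfrac{3}{2}\bigl(1-\tfrac{1}{q}\bigr)$ (i.e.\ $\beta=\tfrac14(1+\tfrac3q)$) for $q\in[2,3)$ and subcriticality for $q\ge 3$. The only blemish is the intermediate formula $1-\tfrac{1+\kappa}{p}=\tfrac12+\tfrac32(\tfrac12-\tfrac1q)$, which is an arithmetic slip (it should read $\tfrac{3}{2q}-\tfrac12$), but your final trace-space indices $\tfrac{3}{2q}-\tfrac12$ and $\tfrac1p+\tfrac{3}{2q}-\tfrac12$ are stated correctly and match the paper.
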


We conclude this section with the blow-up criterion in the Hilbert space case.
Let us observe that the case $p = q = 2$ is subcritical in 3D.

\begin{thm}\label{thm:blow-up crit stoch fsi Hilbert space}
Under the assumptions of \autoref{thm:loc strong wp stoch fsi Hilbert space}, for the $\rL^2$-maximal solution $(w,\sigma)$ resulting from the latter theorem, it is valid that $\bP\bigl(\sigma < \infty, \, \sup_{t \in [0,\sigma)} \| w(t) \|_{\rX_{\nicefrac{1}{2}}} < \infty\bigr) = 0$.
\end{thm}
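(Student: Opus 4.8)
The plan is to read the statement off the abstract subcritical blow-up criterion \autoref{prop:blow-up crit bilin SPDE}(b); once the setup underlying \autoref{thm:loc strong wp stoch fsi Hilbert space} is recorded, essentially no new work is required. First I would recall that \autoref{thm:loc strong wp stoch fsi Hilbert space} is obtained by casting \eqref{eq:stoch forced fsi problem intro} into the abstract bilinear form \eqref{eq:bilin SPDE} with $\rX_0$, $\rX_1$ as specified, $A$ the fluid-structure operator, $B$ the (small) transport-noise operator $v \mapsto \cP[(b_n \cdot \nabla)v]$, and $F(w) = G(w,w)$ the bilinear map induced by the convective term $-((v - \ell) \cdot \nabla)v$ acting on $w = (v,\ell,\omega)$. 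In the course of that proof one has $\lambda + A \in \Hinfty(\rX_0)$ with angle $< \nicefrac{\pi}{2}$ (\autoref{sec:bdd Hoo-calculus fluid-structure operator}), hence $(A,0) \in \cSMR_{2,0}^\bullet$ by \autoref{prop:stoch max reg via Hinfty subset of Lq} with $q = 2$, which \autoref{lem:pert SMR} upgrades to $(A,B) \in \cSMR_{2,0}^\bullet$ provided $\| (b_n)_{n \ge 1} \|_{\rW^{1,\infty}(\cF_0;\ell^2)} < \eps$. Therefore the hypotheses of \autoref{prop:loc ex result bilin SPDE}, and so of \autoref{prop:blow-up crit bilin SPDE}, are in force with $p = q = 2$ and $\kappa = 0$.

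The only genuinely new point is to verify that the couple $(p,\kappa) = (2,0)$ is \emph{subcritical}, i.e.\ $\frac{1+\kappa}{p} < 2(1-\beta)$ for the exponent $\beta$ of \autoref{ass:main ass bilin setting}; for $p = 2$ and $\kappa = 0$ this reduces to $\beta < \nicefrac{3}{4}$. Here I would use the three-dimensional Sobolev embeddings $\rH^{2\beta}(\cF_0) \hookrightarrow \rL^{r_1}(\cF_0)$ and $\rH^{2\beta-1}(\cF_0) \hookrightarrow \rL^{r_2}(\cF_0)$ with $\nicefrac{1}{r_1} + \nicefrac{1}{r_2} \le \nicefrac{1}{2}$, which are available precisely for $\nicefrac{5}{8} \le \beta < \nicefrac{3}{4}$, to conclude that $G \colon \rX_\beta \times \rX_\beta \to \rX_0$ is bounded for, e.g., $\beta = \nicefrac{5}{8}$ (the rigid-body components are finite-dimensional, and the solenoidal and boundary constraints survive the Helmholtz projection $\cP$). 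This is exactly the bilinear estimate carried out for the nonlinear terms in \autoref{sec:proof main results}. Since $\beta = \nicefrac{5}{8} \in (\nicefrac{1}{2},\nicefrac{3}{4})$, one gets $\frac{1+\kappa}{p} = \nicefrac{1}{2} < \nicefrac{3}{4} = 2(1-\beta)$, so \autoref{ass:main ass bilin setting} holds with strict inequality and the problem is strictly below criticality in 3D.

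It then remains to apply \autoref{prop:blow-up crit bilin SPDE}(b) to the $\rL^2$-maximal solution $(w,\sigma)$, which yields
\[
    \bP\bigl(\sigma < \infty,\ \sup_{t \in [0,\sigma)} \| w(t) \|_{\rX_{1-\frac{1+\kappa}{p},p}} < \infty\bigr) = 0,
\]
and to identify the trace space: since $\rX_0$ and $\rX_1$ are Hilbert spaces and $p = q = 2$, \autoref{lem:interpol spaces} gives $\rX_{1-\frac{1+\kappa}{p},p} = (\rX_0,\rX_1)_{\nicefrac{1}{2},2} = \rB_{22,c,\sigma}^{1}(\cF_0) = \rH_{c,\sigma}^{1,2}(\cF_0) = [\rX_0,\rX_1]_{\nicefrac{1}{2}} = \rX_{\nicefrac{1}{2}}$, the space appearing in \eqref{eq:X_1/2} and in the statement. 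Substituting this identification yields the claim.

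I do not anticipate a serious obstacle, since the heavy lifting — the bounded $\Hinfty$-calculus of the fluid-structure operator, the transport-noise perturbation of stochastic maximal regularity, and the bilinear estimate for the convection term — has already been done for the well-posedness results. The one place requiring a moment's care is the bilinear bound near $\del \cS_0$ together with its coupling to the rigid-body unknowns, but this estimate is reused verbatim from \autoref{sec:proof main results}; at the level of the present theorem the argument really reduces to recording the admissible exponent $\beta = \nicefrac{5}{8} < \nicefrac{3}{4}$ and invoking \autoref{prop:blow-up crit bilin SPDE}(b).
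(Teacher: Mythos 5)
Your proposal is correct and follows essentially the same route as the paper: the paper likewise reduces the statement to \autoref{prop:blow-up crit bilin SPDE}(b) by noting that for $p=q=2$, $\kappa=0$ the critical exponent from \autoref{lem:nonlin est}(a) is $\beta=\nicefrac{5}{8}$, so that $\frac{1+\kappa}{p}=\nicefrac{1}{2}<\nicefrac{3}{4}=2(1-\beta)$ and the problem is subcritical. The only cosmetic slip is the phrase that the Sobolev embeddings are available \emph{precisely} for $\nicefrac{5}{8}\le\beta<\nicefrac{3}{4}$ — they hold for all $\beta\ge\nicefrac{5}{8}$, the upper bound $\nicefrac{3}{4}$ being the subcriticality constraint rather than an embedding constraint — but this does not affect the argument.
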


\section{Bounded $\Hinfty$-calculus of the fluid-structure operator}\label{sec:bdd Hoo-calculus fluid-structure operator}

In this section, we introduce the fluid-structure operator associated to the fluid-rigid body interaction problem of a viscous, incompressible, Newtonian fluid, and we establish its bounded $\Hinfty$-calculus.
Moreover, we derive further functional analytic properties such as the stochastic maximal regularity from there.
With regard to the setup of the stochastic fluid-rigid body interaction problem~\eqref{eq:stoch forced fsi problem intro}, we focus on the case of the fluid and the rigid body filling the entire $\R^3$.
We will comment on the associated bounded domain case in \autoref{sec:concl rems & further discussion}.
In contrast to the situation in~\eqref{eq:stoch forced fsi problem intro}, the analysis in this section is carried out for a rigid body {\em of arbitrary shape} with boundary of class $\rC^3$.

As a preparation, we collect some properties of the Helmholtz decomposition and the Stokes operator domains with compact boundary.
The first three assertions can be found in \cite[Theorem~5.3 and Theorem~1.4]{SS:92}, while we refer to \cite[Theorem~3]{NS:03} for the last assertion.
The first three assertions also hold if the boundary of the domain is only in $\rC^1$.
For simplicity, and as this is the relevant case for our application, we focus on the 3D case, but the results remain valid for general $n \ge 2$.

\begin{lem}\label{lem:props Helmholtz and Stokes}
Let $q \in (1,\infty)$, consider a domain $\cF_0 \subset \R^3$ with compact boundary of class $\rC^3$, recall~$\rL_\sigma^q(\cF_0)$ from \autoref{sec:main results}, and define the spaces $\rW_{\mdiv}^q(\cF_0) \coloneqq \{f \in \rL^q(\cF_0)^3 : \mdiv f \in \rL^q(\cF_0)\}$ as well as~$\rG^q(\cF_0) \coloneqq \{\nabla f : f \in \rL_\mathrm{loc}^q(\overline{\cF_0}), \enspace \nabla f \in \rL^q(\cF_0)^3\}$.
\begin{enumerate}[(a)]
    \item The normal trace $f \mapsto \gamma_n f \coloneqq \varphi|_{\del \cF_0} \cdot n$, for the outer unit normal vector $n$, extends to a continuous linear operator from $\rW_{\mdiv}^q(\cF_0)$ to $\rW^{-\nicefrac{1}{q},q}(\cF_0)$.
    \item The space $\rL^q(\cF_0)^3$ can be decomposed into $\rL^q(\cF_0)^3 = \rL_\sigma^q(\cF_0) \oplus \rG^q(\cF_0)$.
    In other words, for every $f \in \rL^q(\cF_0)^3$, there exist unique $u \in \rL_\sigma^q(\cF_0)$ and $\nabla p \in \rG^q(\cF_0)$ such that $f = u + \nabla p$.
    This decomposition is known as Helmholtz decomposition.
    The associated projection onto $\rL_\sigma^q(\cF_0)$ is denoted by $\cP$, and $p$ solves the weak Neumann problem
    \begin{equation*}
        \Delta p = \mdiv u, \tin \cF_0, \enspace \del_n p = u \cdot n, \ton \del \cF_0.
    \end{equation*}
    \item It holds that $\rL_\sigma^q(\cF_0) = \{u \in \rW_{\mdiv}^q(\cF_0) : \mdiv u = 0 \tin \cF_0 \tand u \cdot n = 0, \ton \del \cF_0\}$.
    \item Define the Stokes operator $A_0$ on $\rL_\sigma^q(\cF_0)$ with homogeneous Dirichlet boundary conditions by
    \begin{equation}\label{eq:Stokes op}
        A_0 u \coloneqq \cP u, \tfor u \in \rD(A_0) \coloneqq \rW^{2,q}(\cF_0)^3 \cap \rW_0^{1,q}(\cF_0)^3 \cap \rL_\sigma^q(\cF_0).
    \end{equation}
    Then $-A_0 \in \Hinfty(\rL_\sigma^q(\cF_0))$ with $\Hinfty$-angle $\phi_{A_0}^\infty = 0$.
\end{enumerate}
\end{lem}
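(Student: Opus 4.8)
\emph{Proof plan.} The plan is to handle (a)--(c) by the standard duality and variational machinery attached to the weak Neumann problem on a domain with compact boundary, and to obtain (d) by localizing the Stokes resolvent to the whole space and to bent half-spaces---where the sharp $\Hinfty$-calculus is available---and then absorbing the localization errors by a perturbation argument. For (a), the first step is to record Green's identity
\begin{equation*}
    \int_{\cF_0} f \cdot \nabla \varphi \rd x + \int_{\cF_0} (\mdiv f)\,\varphi \rd x = \int_{\del \cF_0} (f \cdot n)\,\varphi \rd \Gamma
\end{equation*}
for smooth $f$, $\varphi$, and then to \emph{define} $\gamma_n f$, for $f \in \rW_{\mdiv}^q(\cF_0)$, via the right-hand side by duality: since the trace operator $\rW^{1,q'}(\cF_0) \to \rW^{\nicefrac{1}{q},q'}(\del \cF_0)$ admits a bounded linear right inverse $E$, the map $g \mapsto \int_{\cF_0} f \cdot \nabla(Eg)\rd x + \int_{\cF_0} (\mdiv f)(Eg)\rd x$ is a well-defined element of $\rW^{-\nicefrac{1}{q},q}(\del \cF_0)$, independent of the chosen extension, with norm controlled by $\|f\|_{\rL^q} + \|\mdiv f\|_{\rL^q}$, and it coincides with the pointwise normal trace on smooth fields. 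Only $\rC^1$-regularity of the boundary enters here.

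For (b) and (c), the analytic core is the $\rL^q$-solvability of the weak Neumann problem, namely that for $f \in \rL^q(\cF_0)^3$ there is, uniquely up to additive constants, a $p$ with $\nabla p \in \rG^q(\cF_0)$ solving $\int_{\cF_0} \nabla p \cdot \nabla \varphi \rd x = \int_{\cF_0} f \cdot \nabla \varphi \rd x$ for all $\nabla \varphi \in \rG^{q'}(\cF_0)$, together with $\|\nabla p\|_{\rL^q} \le C\|f\|_{\rL^q}$. This is exactly the content of the Simader--Sohr theory in \cite{SS:92}; the estimate rests on a variational inf-sup inequality for the Dirichlet form on $\rG^q \times \rG^{q'}$, Calder\'on--Zygmund bounds, and a localization near the compact boundary. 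Granting it, I would set $u \coloneqq f - \nabla p$, note $\mdiv u = 0$ and, via (a), $\gamma_n u = 0$, and use a cut-off-and-mollify argument to place $u$ in $\rL_\sigma^q(\cF_0)$; this yields the decomposition and the boundedness of $\cP$, uniqueness being uniqueness in the Neumann problem, and the stated weak Neumann equation for $p$ being a restatement of the variational identity. For (c), the inclusion ``$\subseteq$'' follows since elements of $\rL_\sigma^q(\cF_0)$ are $\rW_{\mdiv}^q$-limits of divergence-free $\rC_\mathrm{c}^\infty$-fields and $\gamma_n$ is continuous on $\rW_{\mdiv}^q(\cF_0)$ by (a), while ``$\supseteq$'' holds because any $u \in \rW_{\mdiv}^q(\cF_0)$ with $\mdiv u = 0$ and $\gamma_n u = 0$ solves the homogeneous Neumann problem, so $\cP u = u$.

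For (d), which I expect to be the main obstacle, I would first establish sectoriality of $A_0$ of angle $0$, i.e.\ $\|\lambda(\lambda + A_0)^{-1}\|_{\cL(\rL_\sigma^q(\cF_0))} \le C_\theta$ uniformly on every sector $\Sigma_\theta$ with $\theta \in (0,\pi)$, and then upgrade this to the bounded $\Hinfty$-calculus with $\phi_{A_0}^\infty = 0$, both through the same localization scheme. Using a partition of unity subordinate to a finite cover of $\overline{\cF_0}$ by one interior patch, where the geometry is that of $\R^3$, and finitely many boundary charts, where $\cF_0$ looks like a bent half-space, one assembles an approximate resolvent out of the whole-space Stokes operator---which, since $\cP$ commutes with $\Delta$, reduces to (minus) the Laplacian on $\rL_\sigma^q(\R^3)$ and has $\Hinfty$-calculus of angle $0$ by Mikhlin's theorem---and of the (bent) half-space Stokes resolvent, whose sharp $\Hinfty$-bound comes from explicit symbol analysis. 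The error produced by this pasting consists of commutator terms with the cut-offs, which are of strictly lower order relative to the Stokes operator on each patch. The hard part will be to show, using compactness of $\del \cF_0$, interpolation inequalities, and relative boundedness with small constant of the lower-order terms (secured by refining the cover or by passing to a shift $\lambda + A_0$), that these errors sum in a Neumann series uniformly in $\lambda$ on the sector and uniformly over the holomorphic functions in the $\Hinfty$-class. This is precisely the mechanism carried out in \cite[Theorem~3]{NS:03}, from which $-A_0 \in \Hinfty(\rL_\sigma^q(\cF_0))$ with $\Hinfty$-angle $0$ follows.
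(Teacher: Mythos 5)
Your proposal is correct and matches the paper, which gives no proof of its own but simply cites \cite[Theorems~1.4 and~5.3]{SS:92} for (a)--(c) and \cite[Theorem~3]{NS:03} for (d) --- exactly the two sources whose arguments you sketch and ultimately invoke. Your reconstruction of the duality definition of $\gamma_n$, the Simader--Sohr variational theory for the weak Neumann problem, and the localization scheme for the $\Hinfty$-calculus is a faithful outline of what those references do.
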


Let us first describe the setting.
As mentioned above, we assume here that the fluid-rigid body system fills the whole space~$\R^3$, and we consider the fixed domain case, i.e., we suppose that the rigid body is at rest.
By $\cS_0$, we denote the domain occupied by the rigid body, and we assume its boundary $\del \cS_0$ to be of class $\rC^3$.
The fluid then occupies $\cF_0 \coloneqq \R^3 \setminus \cS_0$.
For $T \in (0,\infty]$, the model variables are the fluid velocity $u \colon (0,T) \times \cF_0 \to \R^3$, the pressure $p \colon (0,T) \times \cF_0 \to \R$, the translational velocity of the rigid body~$\ell \colon (0,T) \to \R^3$, and the rotational velocity $\omega \colon (0,T) \to \R^3$.
Moreover, recall that $\mS$ and $J_0$ represent the body mass and inertia tensor, and recall the Cauchy stress tensor~$\rT(u,p)$ from \eqref{eq:Cauchy stress tensor}.

For a shift $\lambda \in \C$, for $n$ denoting the unit outward normal to $\del \cF_0$, and for suitable right-hand sides $f_1$, $f_2$ and $f_3$ and initial data $u_0$, $\ell_0$ and $\omega_0$, the linearized fluid-rigid body interaction problem reads as
\begin{equation}\label{eq:lin ext}
\left\{
    \begin{aligned}
        \partial_t u + \lambda u  -\Delta u + \nabla {p} &=f_1, \enspace \mdiv {u} = 0, &&\tin (0,T) \times \cF_0,\\
        \mS({\ell})' + \lambda \ell + \int_{\del \cS_0} \rT(u,p) n \rd \Gamma &= f_2, &&\tin (0,T),\\ 
        J_0({\omega})' + \lambda \omega + \int_{\del \cS_0} y \times \rT(u,p) n \rd \Gamma &= f_3, &&\tin (0,T),\\
        u &={\ell}+{\omega}\times y, && \ton (0,T) \times \partial\cS_0,\\
        u(0) = u_0, \enspace \ell(0) &= \ell_0 \tand \omega(0) = \omega_0. 
    \end{aligned}
\right.
\end{equation}

For the following lifting procedure, we take into account a stationary problem capturing the interface condition.
More precisely, for $\lambda \in \C$ and $\ell$, $\omega \in \C^3$, we consider 
\begin{equation}\label{eq:stat problem ext}
    \left\{
    \begin{aligned}
        \lambda w -\Delta w + \nabla \psi
        &= 0, \enspace \mdiv w = 0, &&\tfor y \in \cF_0,\\
        w
        &= \ell + \omega \times y, &&\tfor y \in \del\cS_0.
    \end{aligned}
    \right.
\end{equation}
The lemma below, for which we refer to \cite[Proposition~4.1]{EMT:23}, addresses the solvability of \eqref{eq:stat problem ext}.

\begin{lem}\label{lem:sol to stat problem ext}
Let $\theta \in (0,\pi)$ and $q \in (1,\infty)$.
Then for all $\lambda \in \Sigma_\theta$ and $\ell$, $\omega \in \C^3$, there exists a unique solution $(w,\psi) \in \rW^{2,q}(\cF_0)^3 \times \widehat{\rW}^{1,q}(\cF_0)$ to \eqref{eq:stat problem ext}.
\end{lem}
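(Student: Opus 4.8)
The plan is to reduce the problem, which has inhomogeneous boundary data depending affinely on $y$, to a homogeneous resolvent problem for the Stokes operator, and then to invoke the known resolvent estimates. First I would construct an explicit divergence-free extension $W_{\ell,\omega}$ of the rigid motion $y \mapsto \ell + \omega \times y$ from $\del\cS_0$ into all of $\cF_0$; since $\del\cS_0$ is compact and of class $\rC^3$, one can take $W_{\ell,\omega} = \chi(\ell + \omega\times y) + (\text{corrector})$, where $\chi$ is a smooth cutoff equal to $1$ near $\del\cS_0$ and supported in a bounded neighbourhood, and the corrector is chosen (e.g.\ via a Bogovski\u{\i}-type operator on the bounded annular region where $\nabla\chi \ne 0$) so that $\mdiv W_{\ell,\omega} = 0$ identically; note $\mdiv(\ell+\omega\times y) = 0$ already, so only the cutoff introduces a divergence, which is supported in a bounded set with zero mean. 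The result is $W_{\ell,\omega} \in \rW^{2,q}(\cF_0)^3$ with compact support, depending linearly on $(\ell,\omega)$, with $W_{\ell,\omega}|_{\del\cS_0} = \ell + \omega\times y$.

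Next I would set $w = W_{\ell,\omega} + \tilde w$ and $\psi = \tilde\psi$, so that $(\tilde w,\tilde\psi)$ solves the homogeneous-boundary resolvent problem
\begin{equation*}
    \lambda\tilde w - \Delta\tilde w + \nabla\tilde\psi = g := -\lambda W_{\ell,\omega} + \Delta W_{\ell,\omega}, \quad \mdiv\tilde w = 0 \text{ in }\cF_0, \quad \tilde w = 0 \text{ on }\del\cS_0,
\end{equation*}
with $g \in \rL^q(\cF_0)^3$ of compact support. Applying the Helmholtz projection $\cP$ from \autoref{lem:props Helmholtz and Stokes}, this is equivalent to $(\lambda + A_0)\tilde w = \cP g$ in $\rL^q_\sigma(\cF_0)$, and the pressure $\tilde\psi \in \widehat{\rW}^{1,q}(\cF_0)$ is recovered (up to constants) as $\nabla\tilde\psi = (\Id-\cP)(g + \Delta\tilde w)$, i.e.\ from the weak Neumann problem. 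Since $-A_0 \in \Hinfty(\rL^q_\sigma(\cF_0))$ with $\Hinfty$-angle $0$ by \autoref{lem:props Helmholtz and Stokes}(d), the operator $A_0$ is in particular sectorial of angle $0$, so $\lambda + A_0$ is invertible for every $\lambda \in \Sigma_\theta$, $\theta \in (0,\pi)$ (and even for $\lambda = 0$ in the exterior-domain setting, by the results cited there). This gives existence of $\tilde w \in \rD(A_0) \subset \rW^{2,q}(\cF_0)^3$, hence $w = W_{\ell,\omega} + \tilde w \in \rW^{2,q}(\cF_0)^3$, and of $\tilde\psi \in \widehat{\rW}^{1,q}(\cF_0)$; uniqueness follows from the injectivity of $\lambda + A_0$ on $\rL^q_\sigma(\cF_0)$ together with uniqueness in the Helmholtz decomposition, which pins down $\nabla\psi$ and hence $\psi$ in $\widehat{\rW}^{1,q}$.

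The main obstacle I anticipate is the careful construction of the divergence-free extension $W_{\ell,\omega}$ with the correct mapping properties and linear dependence on $(\ell,\omega)$, together with the bookkeeping for the pressure in the homogeneous space $\widehat{\rW}^{1,q}(\cF_0)$ on an unbounded domain: one must check that the decomposition $g + \Delta\tilde w = \cP(\cdots) + \nabla\tilde\psi$ indeed produces a gradient with $\rL^q$ coefficients and that $\tilde\psi$ is determined modulo constants, which is exactly what \autoref{lem:props Helmholtz and Stokes}(b) supplies. The resolvent estimate itself is not an obstacle, as it is handed to us by the bounded $\Hinfty$-calculus of the Stokes operator; in fact, since the statement only claims well-posedness (not a quantitative bound), even the weaker statement that $\lambda + A_0$ is boundedly invertible for $\lambda \in \Sigma_\theta$ suffices. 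Alternatively, and perhaps more in the spirit of the cited \cite[Proposition~4.1]{EMT:23}, one could bypass the extension step entirely by writing the solution via the stationary Stokes resolvent with the boundary data directly, using the single-layer potential or the known solvability theory for the Stokes resolvent problem in exterior domains with inhomogeneous Dirichlet data; but the reduction to the Stokes operator via a lifting is the cleanest route given the tools already assembled in the excerpt.
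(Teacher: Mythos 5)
Your argument is sound, but note that the paper does not prove this lemma at all: it simply cites \cite[Proposition~4.1]{EMT:23}, so there is nothing internal to compare against. Your route --- lift the rigid velocity $\ell+\omega\times y$ by a cutoff plus a Bogovski\u{\i} corrector on the annulus where $\nabla\chi\neq 0$ (the compatibility condition $\int_A \nabla\chi\cdot(\ell+\omega\times y)\,\rd y=\int_{\del\cS_0}(\ell+\omega\times y)\cdot n\,\rd\Gamma=0$ holds by the divergence theorem since the rigid field is solenoidal), reduce to the homogeneous Stokes resolvent problem, invert $\lambda+\AS$ for $\lambda\in\Sigma_\theta$ using sectoriality of angle $0$ from \autoref{lem:props Helmholtz and Stokes}(d), and recover $\nabla\psi$ from the Helmholtz decomposition --- is the standard proof and is essentially what \cite{EMT:23} does. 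Two small points to tighten: (i) to get $W_{\ell,\omega}\in\rW^{2,q}$ you need the Bogovski\u{\i} corrector at the $\rW^{2,q}$ level, which is fine because the divergence you are correcting, $\nabla\chi\cdot(\ell+\omega\times y)$, is $\rC_c^\infty$ in the open annulus, so the higher-order mapping property $\rW^{1,q}_0\cap\rL^q_0\to\rW^{2,q}_0$ of the Bogovski\u{\i} operator applies; say this explicitly. (ii) Your parenthetical claim that $\lambda+A_0$ is invertible ``even for $\lambda=0$ in the exterior-domain setting'' is false: $0$ lies in the (continuous) spectrum of the Stokes operator on an exterior domain, which is precisely why the paper insists throughout that a shift is ``indispensable in the exterior domain case''. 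This does not damage the proof, since $0\notin\Sigma_\theta$, but the remark should be deleted.
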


Thanks to \autoref{lem:sol to stat problem ext}, for $\lambda \in \Sigma_\theta$, we define the family of operators
\begin{equation}\label{eq:lifting op D_lambda}
    D_\lambda \binom{\ell}{\omega} = \binom{D_{\lambda,u}(\ell,\omega)}{D_{\lambda,p}(\ell,\omega)} = \binom{w}{\psi},
\end{equation}
where $(w,\psi) \in \rW^{2,q}(\cF_0)^3 \times \widehat{\rW}^{1,q}(\cF_0)$ is the solution to \eqref{eq:stat problem ext}.
By construction, we have 
\begin{equation}\label{eq:mapping props D_lambda}
    D_\lambda \in \cL\bigl(\C^3 \times \C^3,\rW^{2,q}(\cF_0)^3 \times \widehat{\rW}^{1,q}(\cF_0)\bigr).
\end{equation}
In the sequel, we fix $\lambda_0 > 0$ and denote the associated lifting operator by $D_{\lambda_0} = \binom{D_{\lambda_0,u}}{D_{\lambda_0,p}}$.

The next goal is to reformulate the resolvent problem associated with the fluid part with inhomogeneous boundary conditions in terms of $\cP u$ and $(\Id - \cP)u$.
To this end, in addition to the lifting operator $D_\lambda$ from~\eqref{eq:lifting op D_lambda}, we first invoke the Neumann operator $\varphi \coloneqq N h$, assigning to each $h$ the solution $\varphi$ to
\begin{equation}\label{eq:Neumann problem ext}
    \Delta \varphi = \mdiv h, \tin \cF_0, \enspace \del_n \varphi = h \cdot n, \ton \del \cS_0.
\end{equation}
The following result on the solvability of this Neumann problem on the exterior domain is a consequence of \cite[Theorem~5.4]{SS:92}.

\begin{lem}\label{lem:Neumann problem ext}
Let $q \in (1,\infty)$.
Then for $h \in \rW_{\mdiv}^q(\cF_0)$, there is a unique solution $\nabla \varphi \in \rG^q(\cF_0)$ to \eqref{eq:Neumann problem ext}.
\end{lem}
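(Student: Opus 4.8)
The statement asks for existence and uniqueness of a solution $\nabla\varphi\in\rG^q(\cF_0)$ of the weak Neumann problem \eqref{eq:Neumann problem ext} for data $h\in\rW_{\mdiv}^q(\cF_0)$ on the exterior domain $\cF_0=\R^3\setminus\cS_0$. The natural interpretation is the weak one: $\nabla\varphi\in\rG^q(\cF_0)$ with
\[
    \int_{\cF_0}\nabla\varphi\cdot\nabla\eta\rd y=\int_{\cF_0}h\cdot\nabla\eta\rd y\tforall\nabla\eta\in\rG^{q'}(\cF_0),
\]
where $q'$ is the conjugate exponent; this already encodes both the PDE $\Delta\varphi=\mdiv h$ in $\cF_0$ and the boundary condition $\del_n\varphi=h\cdot n$ on $\del\cS_0$ (the latter making sense via the normal trace of \autoref{lem:props Helmholtz and Stokes}(a), since $h\in\rW_{\mdiv}^q(\cF_0)$). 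The plan is to \emph{deduce} the claim directly from the Helmholtz decomposition in \autoref{lem:props Helmholtz and Stokes}(b), rather than to re-prove an elliptic estimate from scratch; the cited reference \cite[Theorem~5.4]{SS:92} is the underlying source, but the excerpt already hands us the decomposition, so I would route through it.

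\textbf{Existence.} First I would apply \autoref{lem:props Helmholtz and Stokes}(b) to $h\in\rL^q(\cF_0)^3$: there are unique $u\in\rL_\sigma^q(\cF_0)$ and $\nabla\varphi\in\rG^q(\cF_0)$ with $h=u+\nabla\varphi$. I claim this $\nabla\varphi$ is the desired solution. Since $u\in\rL_\sigma^q(\cF_0)$, by \autoref{lem:props Helmholtz and Stokes}(c) we have $\mdiv u=0$ in $\cF_0$ and $u\cdot n=0$ on $\del\cS_0$ in the sense of the normal trace. Taking the divergence of $h=u+\nabla\varphi$ gives $\mdiv h=\Delta\varphi$ in $\cF_0$ (in the distributional sense, which is exactly the weak formulation tested against $\rC_\mathrm{c}^\infty(\cF_0)$); note $\nabla\varphi\in\rW_{\mdiv}^q(\cF_0)$ because $\Delta\varphi=\mdiv h-\mdiv u=\mdiv h\in\rL^q(\cF_0)$. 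Taking normal traces — legitimate by \autoref{lem:props Helmholtz and Stokes}(a), applied to $u,\nabla\varphi\in\rW_{\mdiv}^q(\cF_0)$ — yields $h\cdot n=u\cdot n+\del_n\varphi=\del_n\varphi$ on $\del\cS_0$. Hence $\nabla\varphi$ solves \eqref{eq:Neumann problem ext}.

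\textbf{Uniqueness.} Suppose $\nabla\varphi_1,\nabla\varphi_2\in\rG^q(\cF_0)$ both solve \eqref{eq:Neumann problem ext} for the same $h$. Then $g:=\nabla\varphi_1-\nabla\varphi_2\in\rG^q(\cF_0)$ satisfies $\Delta(\varphi_1-\varphi_2)=0$ in $\cF_0$ and $\del_n(\varphi_1-\varphi_2)=0$ on $\del\cS_0$, i.e.\ $g\in\rW_{\mdiv}^q(\cF_0)$ with $\mdiv g=0$ and $\gamma_n g=0$. By \autoref{lem:props Helmholtz and Stokes}(c) this means $g\in\rL_\sigma^q(\cF_0)$. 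But $g\in\rL_\sigma^q(\cF_0)\cap\rG^q(\cF_0)=\{0\}$ by the directness of the Helmholtz decomposition in \autoref{lem:props Helmholtz and Stokes}(b). Therefore $\nabla\varphi_1=\nabla\varphi_2$.

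\textbf{Main obstacle.} The only genuinely delicate point is bookkeeping about the precise functional-analytic meaning of the boundary condition and of ``$\mdiv h\in\rL^q$'' versus ``$h\in\rW_{\mdiv}^q$'': one must make sure that the weak Neumann problem stated in \eqref{eq:Neumann problem ext} is interpreted so that the normal trace $\del_n\varphi=h\cdot n$ lives in $\rW^{-1/q,q}(\del\cS_0)$ via \autoref{lem:props Helmholtz and Stokes}(a), and that the identity $h\cdot n=u\cdot n+\del_n\varphi$ is really the trace identity for the sum of two $\rW_{\mdiv}^q$-fields (which uses linearity and continuity of $\gamma_n$). There is no hard PDE estimate to establish here, since the quantitative content — solvability and the a priori bound on $\nabla\varphi$ — is already packaged inside \autoref{lem:props Helmholtz and Stokes}. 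If one instead wants a self-contained argument not invoking part~(b), the main work would be the $\rL^q$-boundedness of the Neumann-to-gradient map on an exterior domain, which is precisely \cite[Theorem~5.4]{SS:92}; I would simply cite it and take the two-line uniqueness argument above.
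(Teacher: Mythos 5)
Your proof is correct. The paper itself offers no argument for this lemma beyond the citation of \cite[Theorem~5.4]{SS:92}, which is precisely the $\rL^q$-solvability of the weak Neumann problem on an exterior domain; you instead derive the lemma from the Helmholtz decomposition already recorded in \autoref{lem:props Helmholtz and Stokes}(b) together with the characterization of $\rL_\sigma^q(\cF_0)$ in part~(c) and the normal trace in part~(a). These two routes are mathematically equivalent (the Helmholtz decomposition and the unique weak solvability of the Neumann problem are two faces of the same statement, and indeed \cite[Theorem~5.4]{SS:92} is the Neumann-problem face of the decomposition theorems cited for \autoref{lem:props Helmholtz and Stokes}), so the substance is the same; what your version buys is a self-contained deduction from results already stated in the paper, at the cost of having to be careful that ``solves \eqref{eq:Neumann problem ext}'' is interpreted weakly and that the trace identity $\gamma_n h = \gamma_n u + \gamma_n(\nabla\varphi)$ is justified by linearity of $\gamma_n$ on $\rW_{\mdiv}^q(\cF_0)$ — both of which you address. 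Your existence step correctly notes $\Delta\varphi = \mdiv h \in \rL^q(\cF_0)$ so that $\nabla\varphi \in \rW_{\mdiv}^q(\cF_0)$, and your uniqueness step via $\rL_\sigma^q(\cF_0) \cap \rG^q(\cF_0) = \{0\}$ is the standard one. No gap.
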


We can also rephrase \autoref{lem:Neumann problem ext} as follows:
The Neumann operator $N \colon \rW_{\mdiv}^q(\cF_0) \to \widehat{\rW}^{1,q}(\cF_0)$, with~$N h \coloneqq \varphi$ representing the solution resulting from \autoref{lem:Neumann problem ext}, is well-defined.
Note that the present regularity is not enough to make sense of boundary integrals.
For this purpose, we modify the solution to the Neumann problem \eqref{eq:Neumann problem ext} so that its trace is well-defined on $\del \cS_0 = \del \cF_0$.
In fact, for $B_R$ denoting the ball in $\R^3$ with center zero and radius $R > 0$, we set $\cF_R \coloneqq \cF_0 \cap B_R$, and we fix $R > 0$ sufficiently large such that $\del \cF_0 \subset \del \cF_R$.
Thanks to $N h \in \widehat{\rW}^{1,q}(\cF_0)$ and the boundedness of $\cF_R$, it is especially valid that $N h \in \widehat{\rW}^{1,q}(\cF_R)$.
Hence, the expression 
\begin{equation}\label{eq:modified Neumann op ext}
    N_R \colon \rW_{\mdiv}^q(\cF_0) \to \widehat{\rW}^{1,q}(\cF_0) \cap \rL_0^q(\cF_R), \enspace N_R h \coloneqq N h - \frac{1}{|\cF_R|} \int_{\cF_R} N h \rd x,
\end{equation}
for $\rL_0^q(\cF_R)$ denoting the functions in $\rL^q(\cF_R)$ with average zero, is well-defined.
The advantage of the adjusted lifting operator $N_R$ becomes apparent in the lemma below.

\begin{lem}\label{lem:reg of Neumann op}
Consider $q \in (1,\infty)$ and $h \in \rW_{\mdiv}^q(\cF_0)$.
Then it holds that $N_R h \in \rW^{1-\nicefrac{1}{q},q}(\del \cS_0)$.
\end{lem}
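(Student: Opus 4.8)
The plan is to reduce the assertion to the classical Sobolev trace theorem by passing to the bounded collar $\cF_R$ around $\del\cS_0$, on which $N_R h$ is, after the normalization in \eqref{eq:modified Neumann op ext}, an honest $\rW^{1,q}$-function. First I would record the geometry: fixing $R > 0$ so large that $\overline{\cS_0} \subset B_R$ (which is possible since $\cS_0$ is compact), the set $\cF_R = \cF_0 \cap B_R = B_R \setminus \overline{\cS_0}$ is a bounded domain whose boundary $\del\cF_R = \del\cS_0 \cup \del B_R$ consists of two disjoint hypersurfaces of class $\rC^3$. In particular $\cF_R$ is an admissible domain for Sobolev trace theory, and $\del\cS_0 = \del\cF_0$ is a relatively open and closed portion of $\del\cF_R$.

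The second step is to show that the restriction of $N_R h$ to $\cF_R$ lies in $\rW^{1,q}(\cF_R)$. By \autoref{lem:Neumann problem ext} together with the construction in \eqref{eq:modified Neumann op ext}, we have $N_R h \in \widehat{\rW}^{1,q}(\cF_0) \cap \rL_0^q(\cF_R)$; in particular $N_R h \in \rL^q(\cF_R)$, while $\nabla (N_R h) = \nabla (N h) \in \rL^q(\cF_0)^3$ and hence, restricted to $\cF_R$, lies in $\rL^q(\cF_R)^3$. Therefore $N_R h \in \rW^{1,q}(\cF_R)$. This is exactly the point of subtracting the mean value over $\cF_R$: the homogeneous space $\widehat{\rW}^{1,q}(\cF_0)$ only controls the gradient, and the $\rL^q$-integrability over the bounded piece $\cF_R$ is recovered from the average-zero normalization together with the Poincar\'e--Wirtinger inequality on $\cF_R$ (alternatively, one could invoke a Ne\v{c}as-type inequality on the bounded Lipschitz domain $\cF_R$ to see that $N h$ itself already belongs to $\rL^q(\cF_R)$).

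Finally I would apply the trace theorem on the bounded $\rC^3$-domain $\cF_R$: the trace operator maps $\rW^{1,q}(\cF_R)$ continuously into $\rW^{1-\nicefrac{1}{q},q}(\del\cF_R)$, so the trace of $N_R h$ lies in $\rW^{1-\nicefrac{1}{q},q}(\del\cF_R)$, and restricting this trace to the component $\del\cS_0$ of $\del\cF_R$ yields $N_R h \in \rW^{1-\nicefrac{1}{q},q}(\del\cS_0)$, which is the claim. I do not expect a genuine obstacle here; the only points deserving a little care are that $\cF_R$ is a bona fide smooth bounded domain — the two boundary components $\del\cS_0$ and $\del B_R$ do not meet, since $\overline{\cS_0} \subset B_R$, so no spurious corners are created — and that the normalization by the $\cF_R$-average in \eqref{eq:modified Neumann op ext} is precisely what turns $N h$ into an element of $\rW^{1,q}(\cF_R)$ rather than merely of the homogeneous space $\widehat{\rW}^{1,q}(\cF_0)$.
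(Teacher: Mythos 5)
Your proposal is correct and follows essentially the same route as the paper: pass to the bounded domain $\cF_R$, observe that $N_R h \in \rW^{1,q}(\cF_R)$ (the paper notes $N h \in \widehat{\rW}^{1,q}(\cF_R)$ and that $N_R h$ differs from $N h$ only by a constant on the bounded set $\cF_R$, which is the same point you make via the mean-zero normalization), and then apply the standard trace theorem together with $\del\cS_0 \subset \del\cF_R$. Your additional remarks on the disjointness of the two boundary components and on the role of Poincar\'e--Wirtinger are correct elaborations of steps the paper leaves implicit.
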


\begin{proof}
We have already argued that $N h \in \rW^{1,q}(\cF_R)$.
As $N_R h$ and $N h$ only differ by a constant, and $\cF_R$ is bounded, we conclude that $N_R h \in \rW^{1,q}(\cF_R)$.
By standard trace theory, and by virtue of $\del \cS_0 = \del \cF_0 \subset \del \cF_R$ by construction, we deduce that $N_{R} h \in \rW^{1-\nicefrac{1}{q},q}(\del \cF_R) \subset \rW^{1-\nicefrac{1}{q},q}(\del \cS_0)$.
\end{proof}

As we will employ a perturbation argument in order to establish the boundedness of the $\Hinfty$-calculus, we also discuss the solvability of the weak Neumann problem \eqref{eq:Neumann problem ext} for less regular data.
For this purpose, we follow \cite[Section~5.2]{LM:06}.
More precisely, for $s \in (0,1)$, $q \in (1,\infty)$ and $q' \in (1,\infty)$ such that $\nicefrac{1}{q} + \nicefrac{1}{q'} = 1$, we consider $h \in \rH^{s+\nicefrac{1}{q}-1,q}(\cF_0)^3$.
It is then possible to extend $\mdiv h$ to a distribution in $\Tilde{\rH}^{s-1-\nicefrac{1}{q},q}(\cF_0)$, where
\begin{equation*}
    \Tilde{\rH}^{t,p}(\cF_0) \coloneqq \bigl\{f \in \rH^{t,p}(\cF_0) : \mathrm{supp}\, f \subset \overline{\cF_0}\bigr\}, \tfor t \in \R \tand p \in (1,\infty).
\end{equation*}
Moreover, the normal component $h \cdot n$ can be identified with an element of $\rW^{s-1,q}(\del \cF_0)^3$ via
\begin{equation*}
    \langle h \cdot n,\phi \rangle = \langle h, \nabla \Tilde{\phi} \rangle + \langle \mdiv u, \Tilde{\phi}\rangle
\end{equation*}
for all $\phi \in \rW^{1-s,q'}(\del \cF_0)$ extending to $\Tilde{\phi} \in \rH^{1-s+\nicefrac{1}{q'},q'}(\cF_0)$ in the trace sense, see also \cite[Lemma~5.3]{LM:06}.
By \cite[Lemma~5.5]{LM:06}, we have
\begin{equation*}
    \| h \cdot n \|_{\rW^{s-1,q}(\del \cF_0)} \le \| h \|_{\rH^{s+\nicefrac{1}{q}-1,q}(\cF_0)} + \| \mdiv h \|_{\rH^{s-\nicefrac{1}{q'}-1,q}}.
\end{equation*}
Furthermore, \cite[Theorem~5.8]{LM:06} implies that for all $q \in (\nicefrac{3}{2},\infty)$, there is $t \in (1,2-\nicefrac{1}{q})$ such that the Neumann problem \eqref{eq:Neumann problem ext} has a unique solution $\varphi \in \rW^{2-t,q}(\cF_0)^3$, and it satisfies the estimate
\begin{equation*}
    \| \varphi \|_{\rW^{2-t,q}(\cF_0)^3} \le C \cdot \| h \|_{\rH^{1-t,q}(\cF_0)}.
\end{equation*}
We stress that the restriction on $q > \nicefrac{3}{2}$ arises from the desire to circumvent the spatial weights in \cite[Theorem~5.8]{LM:06}.
They are introduced in \cite[Section~3]{LM:06} to show the boundedness of Newtonian potentials on unbounded domains, and the choice $t > 1$ then induces constraints on $q$ even though we consider an exterior $\rC^3$-domain instead of a Lipschitz one as in \cite{LM:06}.
Indeed, for $\delta_1 \in (-\nicefrac{3}{q},-2+\nicefrac{3}{q'})$ and $\delta_2 \in (-\nicefrac{3}{q'},-2+\nicefrac{3}{q})$, in view of \cite[Theorem~5.8]{LM:06}, we have to find $t \in (1,2-\nicefrac{1}{q})$ with
\begin{equation}\label{eq:unweighted case}
    0 = -\frac{t}{2}(\delta_1 + \delta_2) + \delta_1 + 2 (1-t)
\end{equation}
in order to bypass the weights.
Note that the above ranges of $\delta_1$ and $\delta_2$ can also be expressed completely in terms of $q$, namely, we have to find $\delta_1 \in (-\nicefrac{3}{q},1-\nicefrac{3}{q})$ and $\delta_2 \in (-3+\nicefrac{3}{q},-2+\nicefrac{3}{q})$.
In addition, we write $t = 1 + \eps$, where $\eps \in (0,1-\nicefrac{1}{q})$.
Thus, \eqref{eq:unweighted case} is equivalent with
\begin{equation}\label{eq:delta_2 in terms of delta_1}
    \frac{1+\eps}{2} \cdot \delta_2 + 2 \eps = \frac{1-\eps}{2} \cdot \delta_1, \tso \delta_2 = \frac{1-\eps}{1+\eps} \cdot \delta_1 - \frac{2}{1+\eps} \cdot 2 \eps.
\end{equation}
Let now $q \in (\nicefrac{3}{2},\infty)$.
We shall see in the sequel that it is possible to find suitable $\eps \in (0,1-\nicefrac{1}{q})$ and $\delta_1$, $\delta_2$ in the aforementioned ranges such that \eqref{eq:delta_2 in terms of delta_1}, or, equivalently, \eqref{eq:unweighted case} for $t \in (1,2-\nicefrac{1}{q})$ is satisfied.
For $q > \nicefrac{3}{2}$, we consider $\delta_1 \in (-\nicefrac{3}{q},1-\nicefrac{3}{q})$ and $\delta_2$ as given in \eqref{eq:delta_2 in terms of delta_1}, and we now check that there exists $\eps \in (0,1-\nicefrac{1}{q})$ such that this is fulfilled.
The ranges for $\delta_1$ then yield that
\begin{equation*}
    \delta_2 < \frac{1-\eps}{1+\eps}\Bigl(1-\frac{3}{p}\Bigr) - \frac{2}{1+\eps} \cdot 2 \eps \tand \delta_2 > \frac{1-\eps}{1+\eps}\Bigl(-\frac{3}{q}\Bigr) - \frac{2}{1+\eps} \cdot 2 \eps.
\end{equation*}
With regard to $\delta_2 \in (-3+\nicefrac{3}{q},-2+\nicefrac{3}{q})$, this requires that
\begin{equation}\label{eq:constraints from delta_2}
    -3+\frac{3}{q} < \frac{1-\eps}{1+\eps} \Bigl(1-\frac{3}{q}\Bigr) - \frac{2}{1+\eps} \cdot 2 \eps \tand -2+\frac{3}{q} > \frac{1-\eps}{1+\eps} \cdot \Bigl(-\frac{3}{q}\Bigr) - \frac{2}{1+\eps} \cdot 2 \eps.
\end{equation}
Straightforward calculations show that \eqref{eq:constraints from delta_2} can be guaranteed provided $\eps < 2- \nicefrac{3}{q}$ and $\eps > 1 - \nicefrac{3}{q}$.
By $\eps \in (0,1-\nicefrac{1}{q})$, we can find such $\eps$ provided $q > \nicefrac{3}{2}$, revealing that for such $q$, there exists $t \in (1,2-\nicefrac{1}{q})$ such that \eqref{eq:unweighted case}, is satisfied for $\delta_1$ and $\delta_2$ in the above ranges.

Subtracting the spatial average on $\cF_R = \cF_0 \cap B_R$, and denoting the associated solution operator by~$N_R$ by a slight abuse of notation, we conclude the assertion of the lemma below.
Note that we especially use classical properties of the trace to deduce that $N_R h \in \rL^q(\del\cS_0)$.

\begin{lem}\label{lem:lower reg Neumann problem}
Let $q \in (\nicefrac{3}{2},\infty)$.
Then there exists $t \in (1,2-\nicefrac{1}{q})$ such that for $h \in \rH^{1-t,q}(\cF_0)^3$, we have $N_R h \in \rL^q(\del \cS_0)^3$, and for sufficiently small $\eps > 0$, it holds that
\begin{equation*}
    \| N_R h \|_{\rW^{2-t,q}(\cF_0)} \le C \cdot \| h \|_{\rH^{1-t,q}(\cF_0)}.
\end{equation*}
\end{lem}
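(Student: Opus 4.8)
The plan is to assemble the claim directly from the ingredients already developed in the paragraph preceding the statement. The quoted results from \cite{LM:06} give, for $q \in (\nicefrac{3}{2},\infty)$, the existence of an exponent $t \in (1,2-\nicefrac{1}{q})$ for which the (unweighted) estimate from \cite[Theorem~5.8]{LM:06} applies. The verification that such a $t$ exists is exactly the computation carried out above: writing $t = 1+\eps$ with $\eps \in (0,1-\nicefrac{1}{q})$, reducing condition~\eqref{eq:unweighted case} to~\eqref{eq:delta_2 in terms of delta_1}, and then checking that one can pick $\delta_1 \in (-\nicefrac{3}{q},1-\nicefrac{3}{q})$ with the resulting $\delta_2$ in its admissible range $(-3+\nicefrac{3}{q},-2+\nicefrac{3}{q})$; this forces $\eps > 1-\nicefrac{3}{q}$ and $\eps < 2-\nicefrac{3}{q}$, which is compatible with $\eps \in (0,1-\nicefrac{1}{q})$ precisely when $q > \nicefrac{3}{2}$. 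With this $t$ fixed, for $h \in \rH^{1-t,q}(\cF_0)^3$ (noting $1-t = -\eps = s + \nicefrac{1}{q} - 1$ with the appropriate $s \in (0,1)$), the data $\mdiv h$ extends to $\Tilde{\rH}^{s-1-\nicefrac{1}{q},q}(\cF_0)$ and $h\cdot n$ to $\rW^{s-1,q}(\del\cF_0)^3$ as recalled above, so the weak Neumann problem~\eqref{eq:Neumann problem ext} is solvable with a unique $\varphi = N h \in \rW^{2-t,q}(\cF_0)^3$ satisfying $\| \varphi \|_{\rW^{2-t,q}(\cF_0)} \le C \| h \|_{\rH^{1-t,q}(\cF_0)}$.

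**From $N$ to $N_R$ and the trace.** Next I would pass to the average-subtracted operator exactly as in~\eqref{eq:modified Neumann op ext}: since $N h \in \rW^{2-t,q}(\cF_0)$ and $\cF_R = \cF_0 \cap B_R$ is bounded, the average $\frac{1}{|\cF_R|}\int_{\cF_R} N h \rd x$ is finite, so $N_R h \coloneqq N h - \frac{1}{|\cF_R|}\int_{\cF_R} N h \rd x$ is well-defined and differs from $N h$ only by a constant; hence $N_R h \in \rW^{2-t,q}(\cF_0)$ with the same homogeneous estimate, and in particular $N_R h \in \rW^{2-t,q}(\cF_R)$ by boundedness of $\cF_R$. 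To obtain the trace regularity on $\del\cS_0$ I would invoke classical trace theory: since $2 - t \in (\nicefrac{1}{q}, 1)$, functions in $\rW^{2-t,q}(\cF_R)$ have a well-defined trace in $\rW^{2-t-\nicefrac{1}{q},q}(\del\cF_R)$, and because $\del\cS_0 = \del\cF_0 \subset \del\cF_R$ by the choice of $R$, restriction gives a trace in $\rW^{2-t-\nicefrac{1}{q},q}(\del\cS_0) \hookrightarrow \rL^q(\del\cS_0)$; applying this componentwise yields $N_R h \in \rL^q(\del\cS_0)^3$. The estimate $\| N_R h \|_{\rW^{2-t,q}(\cF_0)} \le C \| h \|_{\rH^{1-t,q}(\cF_0)}$ is then just the combination of the \cite{LM:06} bound for $N h$ with the fact that subtracting a constant on a bounded set does not increase the homogeneous norm (and is controlled in the inhomogeneous norm by the homogeneous one plus the average, which is itself bounded by $\| N h \|$).

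**Main obstacle.** The genuinely delicate point is the first step — ensuring that the unweighted version of \cite[Theorem~5.8]{LM:06} is actually available for the whole range $q \in (\nicefrac{3}{2},\infty)$, i.e.\ that one can simultaneously satisfy $t \in (1,2-\nicefrac{1}{q})$, the weight-killing identity~\eqref{eq:unweighted case}, and the admissibility ranges for $\delta_1,\delta_2$. This is why the lemma is stated with an existential $t$ rather than a concrete one, and why the restriction to $q > \nicefrac{3}{2}$ (rather than $q > 1$) is unavoidable: the constraint $t > 1$ needed to avoid spatial weights interacts with the Besov/Bessel exponents through~\eqref{eq:delta_2 in terms of delta_1}. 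Since the arithmetic verifying compatibility has already been displayed in the surrounding text, the proof itself only needs to record the conclusion of that computation, then perform the routine averaging and trace arguments. The remaining steps are all standard and short.
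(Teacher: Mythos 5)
Your proposal is correct and follows essentially the same route as the paper, which states this lemma as the conclusion of the preceding discussion: the exponent arithmetic ensuring the unweighted version of \cite[Theorem~5.8]{LM:06} applies for $q>\nicefrac{3}{2}$, followed by subtraction of the average on the bounded set $\cF_R$ and classical trace theory (using $2-t>\nicefrac{1}{q}$) to land in $\rL^q(\del\cS_0)$. Your identification of the weight-killing compatibility check as the only delicate point matches the paper's emphasis exactly.
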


Equipped with the aforementioned lifting operators, we are now in the position to reformulate the inhomogeneous Stokes resolvent problem in terms of the lifting operators.

\begin{lem}\label{lem:equiv inhom Stokes resolvent problem ext}
Let $q \in (1,\infty)$ as well as $(f,\ell,\omega) \in \rL_\sigma^q(\cF_0) \times \C^3 \times \C^3$, and for $R > 0$ sufficiently large as specified above, $\cF_R = \cF_0 \cap B_R$.
Then $(u,p) \in \rW^{2,q}(\cF_0)^3 \times \widehat{\rW}^{1,q}(\cF_0) \cap \rL_0^q(\cF_R)$ satisfies
\begin{equation}\label{eq:inhom resolvent probl Stokes ext}
    \left\{
    \begin{aligned}
        \lambda u -  \Delta u + \nabla p 
        &= f, \enspace \mdiv u = 0, &&\tin \cF_0,\\
        u
        &= \ell + \omega \times y, &&\ton \del \cS_0
    \end{aligned}
    \right.
\end{equation}
if and only if
\begin{equation}\label{eq:formulae Pu Id - Pu and p Stokes ext}
    \left\{
    \begin{aligned}
        \lambda \cP u - A_0 \cP u + (A_0 - \lambda_0) \cP D_{\lambda_0,u}(\ell,\omega)
        &= \cP f,\\
        (\Id - \cP) u
        &= (\Id - \cP) D_{\lambda_0,u}(\ell,\omega),\\
        p
        &= N_R( \Delta \cP u) - \lambda N_R(\ell + \omega \times y).
    \end{aligned}
    \right.
\end{equation}
\end{lem}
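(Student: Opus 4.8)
The plan is to establish the claimed equivalence by showing that solving the inhomogeneous Stokes resolvent problem \eqref{eq:inhom resolvent probl Stokes ext} is the same as solving the decoupled system \eqref{eq:formulae Pu Id - Pu and p Stokes ext}, where the coupling in the boundary condition has been absorbed into the lifting operator $D_{\lambda_0}$. First I would apply the Helmholtz decomposition from \autoref{lem:props Helmholtz and Stokes}(b) to split $u = \cP u + (\Id - \cP)u$ and similarly decompose the solution $w = D_{\lambda_0,u}(\ell,\omega)$ of the stationary lifting problem \eqref{eq:stat problem ext}. The key idea is to write $u = \cP u + (\Id-\cP)D_{\lambda_0,u}(\ell,\omega)$, i.e. the gradient part of $u$ is forced to equal the gradient part of the lifting. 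This is legitimate because both $u$ and $D_{\lambda_0,u}(\ell,\omega)$ are divergence-free and satisfy $u = \ell + \omega\times y = D_{\lambda_0,u}(\ell,\omega)$ on $\del\cS_0$, hence $u - D_{\lambda_0,u}(\ell,\omega) \in \rL_\sigma^q(\cF_0)$ by \autoref{lem:props Helmholtz and Stokes}(c) (it is divergence-free with vanishing normal trace), which means $(\Id-\cP)\bigl(u - D_{\lambda_0,u}(\ell,\omega)\bigr) = 0$; this is exactly the second line of \eqref{eq:formulae Pu Id - Pu and p Stokes ext}.

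Next I would derive the equation for $\cP u$. Starting from $\lambda u - \Delta u + \nabla p = f$, apply $\cP$ to get $\lambda \cP u - \cP\Delta u = \cP f$, since $\cP\nabla p = 0$. Now $\cP\Delta u = \cP\Delta \cP u + \cP\Delta(\Id-\cP)u = A_0\cP u + \cP\Delta(\Id-\cP)D_{\lambda_0,u}(\ell,\omega)$ using the definition \eqref{eq:Stokes op} of the Stokes operator $A_0$ (noting $\cP u$ vanishes on the boundary, as $u = D_{\lambda_0,u}$ there and $(\Id-\cP)u = (\Id-\cP)D_{\lambda_0,u}$ carries the full boundary datum). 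For the remaining term, I use that $w := D_{\lambda_0,u}(\ell,\omega)$ solves \eqref{eq:stat problem ext} with shift $\lambda_0$, i.e. $\lambda_0 w - \Delta w + \nabla\psi = 0$; applying $\cP$ gives $\cP\Delta w = \lambda_0 \cP w$, and since $\cP w = \cP(\Id-\cP)w + \cP\cP w$... more carefully, $\cP\Delta w = \cP\Delta\cP w + \cP\Delta(\Id-\cP)w = A_0\cP w + \cP\Delta(\Id-\cP)w$. Combining, $\cP\Delta(\Id-\cP)w = \lambda_0\cP w - A_0\cP w = (\lambda_0 - A_0)\cP w = (\lambda_0 - A_0)\cP D_{\lambda_0,u}(\ell,\omega)$. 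Substituting back yields $\lambda\cP u - A_0\cP u - (\lambda_0 - A_0)\cP D_{\lambda_0,u}(\ell,\omega) = \cP f$, which is precisely the first line of \eqref{eq:formulae Pu Id - Pu and p Stokes ext} after rearranging signs.

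For the pressure identity, I would take the divergence of $\lambda u - \Delta u + \nabla p = f$; since $\mdiv u = 0$ and $\mdiv f = 0$ (as $f \in \rL_\sigma^q(\cF_0)$), this gives $\Delta p = \mdiv \Delta u = \Delta(\mdiv u) = 0$ in the interior — wait, more precisely $\Delta p = \mdiv(\Delta u - \lambda u + f) = \mdiv\Delta u$. Writing $\Delta u = \Delta\cP u + \nabla(\mdiv(\Id-\cP)u)$-type manipulations, and reading off the boundary condition $\del_n p = (\Delta u - \lambda u + f)\cdot n$ on $\del\cS_0$ with $u = \ell + \omega\times y$ there, I characterize $p$ as the solution of a weak Neumann problem whose data is $\Delta\cP u$ (in the sense that $\nabla p = (\Id - \cP)\Delta u$ plus the contribution of the inhomogeneous boundary term), normalized by subtracting the average over $\cF_R$; this matches $p = N_R(\Delta\cP u) - \lambda N_R(\ell+\omega\times y)$ via the $N_R$ operator from \eqref{eq:modified Neumann op ext}, using linearity of $N_R$ and that $\Delta(\Id-\cP)u = \Delta(\Id-\cP)D_{\lambda_0,u}$ together with $(\Id-\cP)\nabla\psi$-bookkeeping. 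The converse direction — that any $(u,p)$ built from a solution of \eqref{eq:formulae Pu Id - Pu and p Stokes ext} via $u = \cP u + (\Id-\cP)D_{\lambda_0,u}(\ell,\omega)$ and the given pressure formula solves \eqref{eq:inhom resolvent probl Stokes ext} — follows by reversing these steps, using uniqueness in \autoref{lem:sol to stat problem ext} and \autoref{lem:Neumann problem ext} to identify all terms. The main obstacle I expect is bookkeeping the $(\Id - \cP)$ components and the pressure gradient carefully: ensuring that the Laplacian, the Helmholtz projection, and the boundary traces interact consistently (in particular that $\cP\Delta u \neq \Delta\cP u$ in general on a domain with boundary, so the correction term $(A_0-\lambda_0)\cP D_{\lambda_0,u}(\ell,\omega)$ genuinely accounts for the discrepancy), and verifying that the normalization by the average over $\cF_R$ does not affect the gradient and hence the equation.
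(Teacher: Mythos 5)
Your overall strategy is the same as the paper's (absorb the boundary coupling into the lifting $D_{\lambda_0}$, then apply the Helmholtz projection), and your treatment of the second and third identities in \eqref{eq:formulae Pu Id - Pu and p Stokes ext} is essentially correct. However, your derivation of the first identity contains a genuine error. You assert that ``$\cP u$ vanishes on the boundary'' and on that basis write $\cP\Delta\cP u = A_0\cP u$. This is false: the Helmholtz projection does not preserve the Dirichlet boundary condition (a point the paper itself stresses when computing the interpolation spaces), and on $\del\cS_0$ one has $\cP u = \cP D_{\lambda_0,u}(\ell,\omega) \neq 0$ in general. Consequently $\cP u \notin \rD(A_0)$, and likewise $\cP D_{\lambda_0,u}(\ell,\omega) \notin \rD(A_0)$; the expressions $A_0 \cP u$ and $A_0 \cP D_{\lambda_0,u}(\ell,\omega)$ in \eqref{eq:formulae Pu Id - Pu and p Stokes ext} only make sense through their difference $A_0\bigl(\cP u - \cP D_{\lambda_0,u}(\ell,\omega)\bigr)$, since it is precisely $u - D_{\lambda_0,u}(\ell,\omega)$ that lies in $\rD(A_0)$ (divergence free, zero trace). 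Your intermediate identity $\cP\Delta(\Id-\cP)w = (\lambda_0 - A_0)\cP w$ for $w = D_{\lambda_0,u}(\ell,\omega)$ exposes the inconsistency: since $\mdiv w = 0$, one has $(\Id-\cP)w = \nabla\varphi$ with $\Delta\varphi = 0$, hence $\Delta(\Id-\cP)w = 0$, so your identity would force the correction term $(A_0-\lambda_0)\cP D_{\lambda_0,u}(\ell,\omega)$ to vanish --- which it does not, and which would trivialize the whole point of the non-diagonal coupling.

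The repair is exactly the paper's route: set $\tu \coloneqq u - D_{\lambda_0,u}(\ell,\omega)$ and $\tp \coloneqq p - D_{\lambda_0,p}(\ell,\omega)$, observe that $\tu \in \rD(A_0)$ so $\cP\tu = \tu$ and $A_0\tu$ is legitimately defined, subtract the stationary system \eqref{eq:stat problem ext} (which contributes the $\lambda_0 D_{\lambda_0,u}(\ell,\omega)$ term), and then apply $\cP$; the first line of \eqref{eq:formulae Pu Id - Pu and p Stokes ext} is the resulting equation written out with the convention $A_0\cP u - A_0\cP D_{\lambda_0,u}(\ell,\omega) = A_0\tu$. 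For the converse you should also be more explicit than ``reverse the steps'': the paper reads off from the second line that $\tu\in\rD(A_0)$, rewrites the first line as a projected Stokes resolvent equation for $\tu$, recovers a pressure $\tp$ from the definition of the Helmholtz decomposition, and adds back the lifting; uniqueness of the weak Neumann problem then identifies $p$ with the stated formula.
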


\begin{proof}
First, we consider $(u,p) \in \rW^{2,q}(\cF_0)^3 \times \widehat{\rW}^{1,q}(\cF_0) \cap \rL_0^q(\cF_R)$ solving \eqref{eq:inhom resolvent probl Stokes ext}.
We then define
\begin{equation*}
    \tu \coloneqq u - D_{\lambda_0,u}(\ell,\omega) \tand \tp \coloneqq p - D_{\lambda_0,p}(\ell,\omega).
\end{equation*}
By construction, it follows that
\begin{equation*}
    \lambda \tu + \lambda D_{\lambda_0,u}(\ell,\omega) -  \Delta \tu + \nabla \tp = f + \lambda_0 D_{\lambda_0,u}(\ell,\omega), \enspace \mdiv \tu = 0, \tin \cF_0, \tand \tu = 0, \ton \del \cF_0.
\end{equation*}
Let us also observe that $\tu \in \rD(A_0)$, leading to $\cP \tu = \tu$.
An application of the Helmholtz projection to the preceding system then yields
\begin{equation*}
    \lambda \cP u - A_0 \cP u + (A_0 - \lambda_0) \cP D_{\lambda_0,u}(\ell,\omega) = \cP f.
\end{equation*}
Another consequence of $\cP \tu = \tu$ is that
\begin{equation*}
    (\Id - \cP)u = (\Id - \cP)(\tu + D_{\lambda_0,u}(\ell,\omega)) = (\Id - \cP) D_{\lambda_0,u}(\ell,\omega).
\end{equation*}
The representation of the Helmholtz projection from \autoref{lem:props Helmholtz and Stokes}(b) implies that $\Delta (\Id - \cP) u = 0$ in $\cF_0$ by $\mdiv u = 0$ in $\cF_0$.
Hence, we may reformulate \eqref{eq:inhom resolvent probl Stokes ext}$_1$ as
\begin{equation*}
    \lambda u - \Delta \cP u + \nabla p = f, \enspace \mdiv u = 0, \tin \cF_0.
\end{equation*}
Applying the divergence and the normal trace to this equation, which is legitimate thanks to \autoref{lem:props Helmholtz and Stokes}(a), and where we exploit $f \in \rL_\sigma^q(\cF_0)$ in conjunction with the characterization of this space in \autoref{lem:props Helmholtz and Stokes}(c), we find that $p$ solves
\begin{equation}\label{eq:weak Neumann problem pressure Stokes ext}
    \Delta p = 0, \tin \cF_0, \enspace \del_n p =  \Delta \cP u \cdot n - \lambda u \cdot n, \ton \del \cF_0.
\end{equation}
Note that $\mdiv \Delta \cP u = \mdiv u = 0$, so $ \Delta \cP u - \lambda u \in \rW_{\mdiv}^q(\cF_0)$, and $p = N_R ( \Delta \cP u - \lambda u) \in \widehat{\rW}^{1,q}(\cF_0) \cap \rL_0^q(\cF_R)$ is the unique solution to \eqref{eq:weak Neumann problem pressure Stokes ext}.
This shows that the representation formulae from \eqref{eq:formulae Pu Id - Pu and p Stokes ext} are valid.

On the other hand, suppose that $(u,p) \in \rW^{2,q}(\cF_0)^3 \times \widehat{\rW}^{1,q}(\cF_0) \cap \rL_0^q(\cF_R)$ fulfill \eqref{eq:formulae Pu Id - Pu and p Stokes ext}.
From \eqref{eq:formulae Pu Id - Pu and p Stokes ext}$_2$, we conclude that $\tu \coloneqq u - D_{\lambda_0,u}(\ell,\omega) \in \rD(A_0)$, so \eqref{eq:formulae Pu Id - Pu and p Stokes ext}$_1$ can be rephrased as
\begin{equation*}
    \cP(\lambda \tu - A_0 \tu) = \cP(f - (\lambda - \lambda_0)D_{\lambda_0,u}(\ell,\omega)).
\end{equation*}
By definition of the Helmholtz projection, there exists $\tp \in \widehat{\rW}^{1,q}(\cF_0)$, which we can additionally choose in $\rL_0^q(\cF_R)$ upon subtracting its spatial average in $\cF_R$ as in \eqref{eq:modified Neumann op ext}, and $(\tu,\tp)$ is the solution to
\begin{equation*}
    \lambda \tu -  \Delta \tu + \nabla \tp = f - (\lambda - \lambda_0)D_{\lambda_0,u}(\ell,\omega), \enspace \mdiv \tu = 0, \tin \cF_0, \enspace \tu = 0, \ton \del \cF_0.
\end{equation*}
As a result, for $D_{\lambda_0,u}$ and $D_{\lambda_0,p}$ emerging from \autoref{lem:sol to stat problem ext}, we find that $u = \tu + D_{\lambda_0,u}(\ell,\omega)$ and $p = \tp + D_{\lambda_0,p}(\ell,\omega)$ is a solution to \eqref{eq:inhom resolvent probl Stokes ext}, completing the proof of the equivalence.
\end{proof}

The preceding lemma allows us to rewrite the resolvent problem associated to \eqref{eq:lin ext}.
Indeed, for $\lambda \in \C$, $f_1 \in \rL^q(\cF_0)^3$ and $(f_2,f_3) \in \C^3 \times \C^3$, we consider
\begin{equation}\label{eq:resolvent problem ext}
    \left\{
    \begin{aligned}
        \lambda u -  \Delta u + \nabla p
        &= f_1, \enspace \mdiv u = 0, &&\tin \cF_0,\\
        u
        &= \ell + \omega \times y, &&\ton \del \cS_0,\\
        \lambda \mS \ell
        &= -\int_{\del \cS_0} \rT(u,p) n \rd \Gamma + f_2,\\
        \lambda J_0 \omega
        &= -\int_{\del \cS_0} y \times \rT(u,p) n \rd \Gamma + f_3.
    \end{aligned}
    \right.
\end{equation}
\autoref{lem:equiv inhom Stokes resolvent problem ext} together with the interface condition \eqref{eq:resolvent problem ext}$_2$ now enables us to rewrite \eqref{eq:resolvent problem ext}$_3$ and \eqref{eq:resolvent problem ext}$_4$ as
\begin{equation*}
    \begin{aligned}
        \lambda \mS \ell
        &= -2  \Bigl[\int_{\del \cS_0} \D(\cP u) n \rd \Gamma + \int_{\del \cS_0} \D((\Id - \cP)D_{\lambda_0,u}(\ell,\omega)) n \rd \Gamma\Bigr]\\
        &\quad + \int_{\del \cS_0} N_R( \Delta \cP u)n \rd \Gamma - \lambda \int_{\del \cS_0} N_R(\ell + \omega \times y) n \rd \Gamma + f_2,\\
        \lambda J_0 \omega
        &= -2  \Bigl[\int_{\del \cS_0} y \times \D(\cP u) n \rd \Gamma + \int_{\del \cS_0} y \times \D((\Id - \cP)D_{\lambda_0,u}(\ell,\omega)) n \rd \Gamma\Bigr]\\
        &\quad + \int_{\del \cS_0} y \times N_R( \Delta \cP u)n \rd \Gamma - \lambda \int_{\del \cS_0} y \times N_R(\ell + \omega \times y) n \rd \Gamma + f_3.
    \end{aligned}
\end{equation*}
The above two equations can in turn be reformulated as 
\begin{equation*}
    \lambda K \binom{\ell}{\omega} = C_1 \cP u + C_2 \binom{\ell}{\omega} + \binom{f_2}{f_3}, \twhere
\end{equation*}
\begin{equation}\label{eq:matrix K and ops C_1 and C_2}
    \begin{aligned}
        K
        &= \begin{pmatrix}
            \mS \Id_3 & 0\\
            0 & J_0
        \end{pmatrix} + M, \twith M \binom{\ell}{\omega} = \begin{pmatrix}
            \int_{\del \cS_0} N_R(\ell + \omega \times y)n \rd \Gamma\\
            \int_{\del \cS_0} y \times N_R(\ell + \omega \times y)n \rd \Gamma
        \end{pmatrix},\\
        C_1 \cP u
        &= \begin{pmatrix}
            -2 \int_{\del \cS_0} \D(\cP u) n \rd \Gamma + \int_{\del \cS_0} N_R(\Delta \cP u)n \rd \Gamma\\
            -2 \int_{\del \cS_0} y \times \D(\cP u) n \rd \Gamma + \int_{\del \cS_0} y \times N_R(\Delta \cP u)n \rd \Gamma
        \end{pmatrix}, \tand\\
        C_2 \binom{\ell}{\omega}
        &= \begin{pmatrix}
            -2\int_{\del \cS_0} \D((\Id_3 - \cP)D_{\lambda_0,u}(\ell,\omega)) n \rd \Gamma\\
            -2\int_{\del \cS_0} y \times \D((\Id_3 - \cP)D_{\lambda_0,u}(\ell,\omega)) n \rd \Gamma
        \end{pmatrix}.
    \end{aligned}
\end{equation}
The invertibility of the matrix $K$ follows from \cite[Lemma~4.3]{GGH:13}.
Recalling the Stokes operator $A_0$ in the exterior domain $\cF_0$ from \eqref{eq:Stokes op}, we are now in the position to define the fluid-structure operator in the present setting.
First, we introduce the ground space
\begin{equation}\label{eq:ground space fluid-structure op}
    \rX_0 = \{(u,\ell,\omega) \in \rL^q(\cF_0)^3 \times \C^3 \times \C^3 : u - \cP D_{\lambda_0,u}(\ell,\omega) \in \rL_\sigma^q(\cF_0)\}.
\end{equation}
From $\mdiv D_{\lambda_0,u}(\ell,\omega) = 0$ by construction of the lifting and the characterization of $\rL_\sigma^q(\cF_0)$ provided in \autoref{lem:props Helmholtz and Stokes}(c), it follows that $\rX_0$ from \eqref{eq:ground space fluid-structure op} coincides with $\rX_0$ as introduced in \autoref{sec:main results}.
On the other hand, let us introduce the space
\begin{equation*}
    \bX \coloneqq \{\Phi \in \rL_\sigma^q(\R^3) : D(\Phi) = 0 \tin \cS_0\}.
\end{equation*}
The following result can be found in \cite[Section~3.1]{EMT:23}.

\begin{lem}\label{lem:ground space isomorphic to closed subspace of Lq}
For the ground space $\rX_0$ from \eqref{eq:ground space fluid-structure op}, it holds that $\rX_0 \simeq \bX$.
In particular, $\rX_0$ is isomorphic to a closed subspace of $\rL^q$ on a $\sigma$-finite measure space.
\end{lem}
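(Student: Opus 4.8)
The plan is to realise the isomorphism $\rX_0\simeq\bX$ explicitly, via extension (resp.\ restriction) by a rigid velocity field. Recall from the paragraph preceding the statement that $\rX_0$ in \eqref{eq:ground space fluid-structure op} coincides with the space of triples $(v,\ell,\omega)\in\rL^q(\cF_0)^3\times\C^3\times\C^3$ with $\mdiv v=0$ in $\cF_0$ and $v\cdot n=(\ell+\omega\times y)\cdot n$ on $\del\cS_0$. Given such a triple, I would set $E(v,\ell,\omega):=\dsone_{\cF_0}\,v+\dsone_{\cS_0}\,(\ell+\omega\times y)\in\rL^q(\R^3)^3$. Since $\mdiv v=0$ we have $v\in\rW_{\mdiv}^q(\cF_0)$, so by \autoref{lem:props Helmholtz and Stokes}(a) its normal trace is defined and $\int_{\cF_0}v\cdot\nabla\varphi=\langle v\cdot n,\varphi\rangle_{\del\cF_0}$ for all $\varphi\in\rC_\mathrm{c}^\infty(\R^3)$; since $y\mapsto\ell+\omega\times y$ is smooth and divergence-free, the divergence theorem gives $\int_{\cS_0}(\ell+\omega\times y)\cdot\nabla\varphi=-\langle(\ell+\omega\times y)\cdot n,\varphi\rangle_{\del\cS_0}$. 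Adding these, the boundary terms cancel by the compatibility condition, so $\mdiv E(v,\ell,\omega)=0$ in $\R^3$, i.e.\ $E(v,\ell,\omega)\in\rL_\sigma^q(\R^3)$; moreover $\D(\ell+\omega\times y)=0$ identically because the Jacobian of $y\mapsto\omega\times y$ is skew-symmetric, hence $E(v,\ell,\omega)\in\bX$.

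For the inverse, given $\Phi\in\bX$ I would invoke the classical rigidity lemma: on the connected $\rC^3$-domain $\cS_0$ a distributional solution of $\D(\Phi)=0$ equals a.e.\ a rigid field $y\mapsto\ell_\Phi+\omega_\Phi\times y$, with $\ell_\Phi,\omega_\Phi\in\C^3$ uniquely determined (each second-order partial of $\Phi$ is a linear combination of first-order partials of $\D(\Phi)$, so $\Phi$ is affine with skew-symmetric gradient). Putting $R\Phi:=(\Phi|_{\cF_0},\ell_\Phi,\omega_\Phi)$, we have $\mdiv(\Phi|_{\cF_0})=0$ in $\cF_0$, and running the integration-by-parts identity of the previous paragraph backwards with $\mdiv\Phi=0$ in $\R^3$ forces $\Phi|_{\cF_0}\cdot n=(\ell_\Phi+\omega_\Phi\times y)\cdot n$ on $\del\cS_0$; thus $R\Phi\in\rX_0$. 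Clearly $E$ and $R$ are mutually inverse linear maps, and $E$ is a topological isomorphism because of the two-sided estimate $\|E(v,\ell,\omega)\|_{\rL^q(\R^3)}\simeq\|v\|_{\rL^q(\cF_0)}+|\ell|+|\omega|$: the upper bound uses that $\cS_0$ is bounded, while the lower bound uses that the linear map $(\ell,\omega)\mapsto(\ell+\omega\times\,\cdot\,)|_{\cS_0}$ on the finite-dimensional space $\C^3\times\C^3$ is injective — if $\ell+\omega\times y\equiv0$ on the open set $\cS_0$ then $\omega=0$ and then $\ell=0$ — hence bounded below.

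Finally, $\rL^q(\R^3)^3\cong\rL^q(\R^3\times\{1,2,3\})$ is an $\rL^q$-space over a $\sigma$-finite measure space, $\rL_\sigma^q(\R^3)$ is closed in it, and $\bX=\rL_\sigma^q(\R^3)\cap\ker\bigl(\Phi\mapsto\D(\Phi)|_{\cS_0}\bigr)$ is closed since $\rL^q(\R^3)^3\ni\Phi\mapsto\D(\Phi)|_{\cS_0}\in\mathscr{D}'(\cS_0)^{3\times3}$ is continuous; as $\rX_0\simeq\bX$, the same conclusion holds for $\rX_0$. I expect the only genuinely delicate points to be the divergence-free gluing at the level of the weak normal trace — justifying the cancellation of the boundary terms and, more importantly, its converse when $v$ is merely in $\rW_{\mdiv}^q(\cF_0)$ — and the precise invocation of the rigidity lemma for $\rL^q$-fields on $\cS_0$; the norm equivalence and the identification of a $\sigma$-finite measure space are then routine.
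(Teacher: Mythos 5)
Your proof is correct and is exactly the standard extension/restriction identification (extend by the rigid field $\ell+\omega\times y$ on $\cS_0$, recover $(\ell_\Phi,\omega_\Phi)$ via the rigidity lemma, match normal traces via the generalized Gauss formula) that the paper delegates without proof to \cite[Section~3.1]{EMT:23}. The details you flag as delicate — the cancellation of the weak normal traces, its converse, and the finite-dimensionality argument for the two-sided norm estimate — are handled correctly, so nothing further is needed.
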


Next, we define the fluid-structure operator $\cA_\fs \colon \rD(\cA_\fs) \subset \rX_0 \to \rX_0$, where the domain takes the shape $\rD(\cA_\fs) = \{(\cP u,\ell,\omega) \in \rX_0 : \cP u - \cP D_{\lambda_0,u}(\ell,\omega) \in \rD(A_0)\}$, by
\begin{equation}\label{eq:fluid-structure op incompr Newtonian}
    \cA_\fs \coloneqq \begin{pmatrix}
        A_0 & (\lambda_0 - A_0) \cP D_{\lambda_0,u}\\
        K^{-1} C_1 & K^{-1} C_2
    \end{pmatrix}.
\end{equation}
One can show that $\rD(\cA_\fs)$ is identical with the space $\rX_1$ from \autoref{sec:main results}:
By construction of~$D_{\lambda_0,u}$ from~\eqref{eq:lifting op D_lambda} and its regularity properties stated in \eqref{eq:mapping props D_lambda}, the condition $\bigl(\cP u - \cP D_{\lambda_0,u}(\ell,\omega)\bigr) \in \rD(A_0)$ amounts to saying that $\cP u \in \rW^{2,q}(\cF_0)^3 \cap \rL_\sigma^q(\cF_0)$ as well as $u - (\ell + \omega \times y) = 0$ on $\del \cS_0$.
The equivalent reformulation of the resolvent problem \eqref{eq:resolvent problem ext} in terms of the fluid-structure operator is a direct consequence of \autoref{lem:equiv inhom Stokes resolvent problem ext} together with the definition of the fluid-structure operator and \eqref{eq:matrix K and ops C_1 and C_2}.

\begin{lem}\label{lem:equiv reform resolvent problem fluid-structure op ext}
Let $q \in (1,\infty)$ and terms on the right-hand side $(f_1,f_2,f_3) \in \rL_\sigma^q(\cF_0) \times \C^3 \times \C^3$.
Then $(u,p,\ell,\omega) \in \rW^{2,q}(\cF_0)^3 \times \widehat{\rW}^{1,q}(\cF_0) \cap \rL_0^q(\cF_R) \times \C^3 \times \C^3$ solves \eqref{eq:resolvent problem ext} if and only if
\begin{equation*}
    (\lambda \Id - \cA_\fs) \begin{pmatrix}
            \cP u\\ \ell\\ \omega
        \end{pmatrix} = \begin{pmatrix}
            \cP f_1\\ \Tilde{f_2}\\ \Tilde{f_3}
        \end{pmatrix}, \enspace (\Id - \cP)u = (\Id - \cP)D_{\lambda_0,u}(\ell,\omega) \tand p = N_R( \Delta \cP u) - \lambda N_R(\ell + \omega \times y),
\end{equation*}
with $(\Tilde{f_2},\Tilde{f_3})^\top = K^{-1} (f_2,f_3)^\top$.
\end{lem}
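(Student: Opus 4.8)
The plan is to obtain the asserted equivalence by combining two ingredients that are already in place: \autoref{lem:equiv inhom Stokes resolvent problem ext}, which reformulates the fluid part of \eqref{eq:resolvent problem ext} (its first two lines) as the three identities in \eqref{eq:formulae Pu Id - Pu and p Stokes ext}, and the boundary-integral computation carried out immediately before the statement, which rewrites the rigid-body equations \eqref{eq:resolvent problem ext}$_3$ and \eqref{eq:resolvent problem ext}$_4$ in terms of $K$, $C_1$ and $C_2$. The invertibility of $K$ from \cite[Lemma~4.3]{GGH:13} is what lets one pass freely between the two forms of the body equations, and the definition \eqref{eq:fluid-structure op incompr Newtonian} of $\cA_\fs$ is precisely engineered so that the resulting system is $(\lambda\Id - \cA_\fs)(\cP u,\ell,\omega)^\top = (\cP f_1,\Tilde{f_2},\Tilde{f_3})^\top$ supplemented by the two side conditions on $(\Id-\cP)u$ and $p$.

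Concretely, I would first treat the forward implication. Assume $(u,p,\ell,\omega)$ solves \eqref{eq:resolvent problem ext}. Since $f_1 \in \rL_\sigma^q(\cF_0)$, the first two lines of \eqref{eq:resolvent problem ext} are exactly the inhomogeneous Stokes resolvent problem \eqref{eq:inhom resolvent probl Stokes ext} with datum $f = \cP f_1 = f_1$ and boundary value $\ell + \omega\times y$, so \autoref{lem:equiv inhom Stokes resolvent problem ext} applies and yields the three identities \eqref{eq:formulae Pu Id - Pu and p Stokes ext}. Of these, the second and third are verbatim the side conditions $(\Id-\cP)u = (\Id-\cP)D_{\lambda_0,u}(\ell,\omega)$ and $p = N_R(\Delta\cP u) - \lambda N_R(\ell+\omega\times y)$ in the statement, while the first, compared with \eqref{eq:fluid-structure op incompr Newtonian}, is the first component of $(\lambda\Id - \cA_\fs)(\cP u,\ell,\omega)^\top = (\cP f_1,\Tilde{f_2},\Tilde{f_3})^\top$; the required membership $(\cP u,\ell,\omega)\in\rD(\cA_\fs)$ is provided by the same lemma, which furnishes $u - D_{\lambda_0,u}(\ell,\omega)\in\rD(A_0)$. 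Next, I would substitute the interface condition $u = \ell+\omega\times y$ on $\del\cS_0$, the splitting $u = \cP u + (\Id-\cP)D_{\lambda_0,u}(\ell,\omega)$, and the pressure formula into the surface stress integral $\int_{\del\cS_0}\rT(u,p)n\rd\Gamma$ and the corresponding torque integral $\int_{\del\cS_0}y\times\rT(u,p)n\rd\Gamma$; this is exactly the computation preceding the statement and turns \eqref{eq:resolvent problem ext}$_3$--$_4$ into $\lambda K\binom{\ell}{\omega} = C_1\cP u + C_2\binom{\ell}{\omega} + \binom{f_2}{f_3}$ with $K$, $C_1$, $C_2$ as in \eqref{eq:matrix K and ops C_1 and C_2}. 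Multiplying by $K^{-1}$ produces the remaining two components $\lambda\binom{\ell}{\omega} - K^{-1}C_1\cP u - K^{-1}C_2\binom{\ell}{\omega} = \binom{\Tilde{f_2}}{\Tilde{f_3}}$, which finishes this direction.

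For the converse I would simply run both steps backwards: the first component of $(\lambda\Id-\cA_\fs)(\cP u,\ell,\omega)^\top = (\cP f_1,\Tilde{f_2},\Tilde{f_3})^\top$ together with the two side conditions is exactly \eqref{eq:formulae Pu Id - Pu and p Stokes ext}, hence by \autoref{lem:equiv inhom Stokes resolvent problem ext} recovers lines one and two of \eqref{eq:resolvent problem ext}, while multiplying the last two components by $K$ and re-expanding the boundary integrals (using the interface condition, the formula for $(\Id-\cP)u$ and the pressure formula once more) recovers \eqref{eq:resolvent problem ext}$_3$--$_4$. Since each step above is an equivalence, the chain closes. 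The one point that genuinely needs care — and which I expect to be the main obstacle — is the reversibility of the passage $\int_{\del\cS_0}\rT(u,p)n\rd\Gamma \leftrightarrow C_1\cP u + C_2\binom{\ell}{\omega}$ (and its torque analogue): one has to verify that no information is lost in rewriting the stress integral, which holds because the rewriting consists solely of inserting the already-established representations of $(\Id-\cP)u$, of $p$, and of $u|_{\del\cS_0}$, with no integration by parts and no non-injective projection involved, and because the invertibility of $K$ handles the left-hand side. A secondary, routine check is that all traces occurring are well-defined, e.g.\ $N_R(\Delta\cP u)\in\rW^{1-\nicefrac{1}{q},q}(\del\cS_0)$ by \autoref{lem:reg of Neumann op}, using $\mdiv\Delta\cP u = 0$ so that $\Delta\cP u\in\rW_{\mdiv}^q(\cF_0)$.
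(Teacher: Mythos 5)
Your proposal is correct and follows exactly the route the paper intends: the paper gives no separate proof of this lemma, stating it as a direct consequence of \autoref{lem:equiv inhom Stokes resolvent problem ext}, the rewriting of the body equations via $K$, $C_1$, $C_2$ in \eqref{eq:matrix K and ops C_1 and C_2}, and the definition \eqref{eq:fluid-structure op incompr Newtonian} of $\cA_\fs$, which is precisely the argument you spell out. Your additional checks (reversibility of the purely substitutional rewriting of the stress integrals, membership in $\rD(\cA_\fs)$, well-definedness of the traces) are sound and fill in details the paper leaves implicit.
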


The next theorem provides the main result of this section on the bounded $\cH^\infty$-calculus of the fluid-structure operator up to a shift.
Note that the restriction on $q > \nicefrac{3}{2}$ arises from \autoref{lem:lower reg Neumann problem}.

\begin{thm}\label{thm:bdd H00-calculus of fluid-structure op incompr Newtonian}
For $q \in (\nicefrac{3}{2},\infty)$ and $\cA_\fs$ from \eqref{eq:fluid-structure op incompr Newtonian}, there exists $\lambda \ge 0$ such that $-\cA_\fs + \lambda \in \cH^\infty(\rX_0)$ with $\cH^\infty$-angle $\phi_{-\cA_\fs + \lambda}^\infty = 0$.
\end{thm}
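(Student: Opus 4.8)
The plan is to remove the fluid--rigid body coupling from the domain of $\cA_\fs$ by a decoupling similarity transform, to recognise the resulting operator as a lower-order perturbation of a block-diagonal operator built from the Stokes operator $-A_0$, and then to invoke the block-operator-matrix result \autoref{prop:op theoret props of block op matrices} together with the fractional-power perturbation result \autoref{prop:pert of Hinfty-calculus}. First I would introduce the bounded bijection
$$S\colon\rX_0\longrightarrow\rL_\sigma^q(\cF_0)\times\C^3\times\C^3,\qquad S(\cP u,\ell,\omega)\coloneqq\bigl(\cP u-\cP D_{\lambda_0,u}(\ell,\omega),\,\ell,\,\omega\bigr),$$
which is well defined and has bounded inverse by \eqref{eq:mapping props D_lambda} and the description \eqref{eq:ground space fluid-structure op} of $\rX_0$. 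By \autoref{lem:preservation of results under sim trafe} it is enough to prove the statement for $\tcA_\fs\coloneqq S\cA_\fs S^{-1}$. A direct computation using \eqref{eq:fluid-structure op incompr Newtonian} and \eqref{eq:matrix K and ops C_1 and C_2} shows that $\tcA_\fs$ has the \emph{diagonal} domain $\rD(A_0)\times\C^3\times\C^3$ and the block form
$$\tcA_\fs=\begin{pmatrix}A_0-\cP D_{\lambda_0,u}K^{-1}C_1 & \cP D_{\lambda_0,u}(\lambda_0-\widetilde D)\\ K^{-1}C_1 & \widetilde D\end{pmatrix},\qquad \widetilde D\coloneqq K^{-1}\bigl(C_1\cP D_{\lambda_0,u}+C_2\bigr)\in\cL(\C^3\times\C^3),$$
where the off-diagonal blocks are bounded because $D_{\lambda_0,u}$ and $\cP$ are.

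The crucial point is that the coupling operator $C_1$ is relatively bounded with respect to a \emph{proper} fractional power $(-A_0)^\alpha$, $\alpha<1$; this is where the restriction $q>\nicefrac{3}{2}$ enters. By \eqref{eq:matrix K and ops C_1 and C_2} the operator $C_1\cP u$ is a sum of boundary integrals of two types. The integrals of $\D(\cP u)$ over $\del\cS_0$ only involve the trace of $\nabla\cP u$ and are therefore, by the trace theorem and \autoref{lem:rel of Hinfty with other concepts}(b), $(-A_0)^\alpha$-bounded for every $\alpha\in(\nicefrac{1}{2}+\nicefrac{1}{2q},1)$. The integrals of $N_R(\Delta\cP u)$ apparently require two full spatial derivatives of $\cP u$; here I would invoke \autoref{lem:lower reg Neumann problem}: for a suitable $t\in(1,2-\nicefrac{1}{q})$ it gives $N_R(\Delta\cP u)\in\rL^q(\del\cS_0)^3$ together with $\|N_R(\Delta\cP u)\|_{\rW^{2-t,q}(\cF_0)}\le C\|\Delta\cP u\|_{\rH^{1-t,q}(\cF_0)}\le C\|\cP u\|_{\rH^{3-t,q}(\cF_0)}$, and since $3-t<2$ and $\rD((-A_0)^{(3-t)/2})\hookrightarrow\rH^{3-t,q}(\cF_0)$ by \autoref{lem:rel of Hinfty with other concepts}(b), this term is $(-A_0)^{(3-t)/2}$-bounded with $(3-t)/2<1$. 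Taking $\alpha$ to be the larger of the two exponents, we obtain $\rD((-A_0)^\alpha)\subset\rD(C_1)$ and $\|C_1x\|\le c\|(-A_0)^\alpha x\|+L\|x\|$; in particular $C_1$ is $(-A_0)$-bounded, so $-\tcA_\fs$ (after a shift) is diagonally dominant in the sense of \autoref{def:diag dom op matrix}.

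To conclude, write $-\tcA_\fs+\lambda=\cD_\lambda+\cR$, where $\cD_\lambda\coloneqq\bigl((-A_0)+\lambda\bigr)\oplus\bigl(\lambda-\widetilde D\bigr)$ is block-diagonal on $\rD(A_0)\times\C^6$ and $\cR$ collects the off-diagonal blocks and the finite-rank correction $\cP D_{\lambda_0,u}K^{-1}C_1$ coming from $S$. Since $\rD(\cD_\lambda^\alpha)=\rD((-A_0)^\alpha)\times\C^6$, the previous paragraph shows $\|\cR x\|\le c\|\cD_\lambda^\alpha x\|+L\|x\|$ with $\alpha<1$. For $\lambda$ large the operator $\cD_\lambda$ lies in $\Hinfty$ with angle $0$: the fluid block inherits angle $0$ from \autoref{lem:props Helmholtz and Stokes}(d), and the finite-dimensional block $\lambda-\widetilde D$ has angle $0$ because $\widetilde D=K^{-1}(C_1\cP D_{\lambda_0,u}+C_2)$ has real spectrum ($K$ is symmetric positive definite and $C_1\cP D_{\lambda_0,u}+C_2$ is the symmetric added-mass matrix, cf.\ \cite[Lemma~4.3]{GGH:13}, so $\widetilde D$ is similar to a symmetric matrix). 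As $\cD_\lambda+\cR$ is moreover sectorial and invertible for $\lambda$ large, \autoref{prop:pert of Hinfty-calculus} (for the diagonal-dominance part one may also go through \autoref{prop:op theoret props of block op matrices}) yields $-\tcA_\fs+\lambda\in\Hinfty(\rL_\sigma^q(\cF_0)\times\C^3\times\C^3)$; tracking the angle --- using $\phi^\infty_{\cD_\lambda}=0$ and that $\cR$ is of lower order and of finite rank in the fluid variable, so that the resulting corrections to the functional calculus are finite-rank and do not enlarge the angle --- gives $\phi^\infty_{-\tcA_\fs+\lambda}=0$. Transferring back via \autoref{lem:preservation of results under sim trafe}, and recalling $\rX_0\simeq\bX$ (\autoref{lem:ground space isomorphic to closed subspace of Lq}), finishes the proof.

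The main obstacle is precisely the pressure contribution $\int_{\del\cS_0}N_R(\Delta\cP u)\,n\rd\Gamma$: one must make it relatively bounded with respect to a fractional power $(-A_0)^\alpha$ with $\alpha<1$ rather than with respect to $-A_0$ itself, since the bounded $\Hinfty$-calculus is not stable under merely $(-A_0)$-bounded perturbations (cf.\ \cite{McIY:90}). This requires solving the weak Neumann problem with data of negative smoothness --- \autoref{lem:lower reg Neumann problem} --- which is exactly what forces $q>\nicefrac{3}{2}$. A secondary technical point is to pin the $\Hinfty$-angle down to $0$ rather than merely to some value $<\nicefrac{\pi}{2}$; the latter already suffices for the stochastic maximal regularity deduced afterwards, while the sharper statement relies on the angle-$0$ property of the Stokes operator and the realness of $\sigma(\widetilde D)$.
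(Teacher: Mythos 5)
Your proposal follows essentially the same route as the paper's proof: decouple the domain via the similarity transform $S$ from \eqref{eq:decoupling matrices}, establish that $C_1$ is relatively $(-A_0)^\alpha$-bounded for some $\alpha<1$ by solving the weak Neumann problem with data of negative smoothness (\autoref{lem:lower reg Neumann problem}, which is exactly where $q>\nicefrac{3}{2}$ enters), and conclude via the fractional-power perturbation result \autoref{prop:pert of Hinfty-calculus} and \autoref{lem:preservation of results under sim trafe}. The only organizational difference is that the paper splits $\tcA_\fs=\tcA_{\fs,\rI}+\cB_\fs$, keeping the coupling entry $K^{-1}C_1$ inside the diagonally dominant block matrix $\tcA_{\fs,\rI}$ (handled by \autoref{prop:op theoret props of block op matrices}) and treating only $\cB_\fs$, whose lower-left entry vanishes, as the fractional-power perturbation; you instead take a fully block-diagonal main part and put all off-diagonal entries into the perturbation. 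Both variants work, since the estimate $|C_1\cP u|\le C\|(-A_0)^\alpha\cP u\|$ with $\alpha=\frac{3-t}{2}<1$ is the hard step in either case and you obtain it correctly.

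One caveat: your justification that the finite-dimensional diagonal block $\lambda-\widetilde D$ has $\Hinfty$-angle $0$ rests on the unverified claim that $C_1\cP D_{\lambda_0,u}+C_2$ is a symmetric (added-mass) matrix, so that $\widetilde D$ has real spectrum. The paper neither states nor needs this, and it is not obvious for the lifting at a fixed shift $\lambda_0>0$. The claim is also avoidable: either absorb the bounded operator $\widetilde D$ into the perturbation $\cR$ as well, so that the finite-dimensional block of the main part is just $\lambda\,\Id_{\C^6}$, or argue as the paper does through \autoref{prop:op theoret props of block op matrices}, which requires only $\widetilde D\in\cL(\C^3\times\C^3)$ and delivers, for every $\phi>0$, a shift $\lambda$ with angle at most $\phi$. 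Relatedly, the final angle bound should be traced through $\phi^\infty_{A+B}\le\max\{\phi^\infty_A,\phi_{A+B}\}$ together with the sectoriality (with small angle) of the perturbed operator for $\lambda$ large, as in \cite[Corollary~3.1.6]{PS:16}, rather than through the informal ``finite-rank corrections do not enlarge the angle'' argument.
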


\begin{proof}
The main difficulty is that the domain of the fluid-structure operator $\cA_\fs$ from \eqref{eq:fluid-structure op incompr Newtonian} is {\em non-diagonal}, while the theory recalled in \autoref{sec:Hoo, SMR & bilin SPDEs} only allows for operators with diagonal domain.
Hence, we employ a decoupling argument.
More precisely, for 
\begin{equation}\label{eq:Y_0}
    \rY_0 \coloneqq \rL_\sigma^q(\cF_0) \times \C^3 \times \C^3,
\end{equation}
we prove that there exist $S \in \cL(\rX_0,\rY_0)$, $S^{-1} \in \cL(\rY_0,\rX_0)$ such that $\tcA_\fs = S \cA_\fs S^{-1}$, with diagonal domain $\rD(\tcA_\fs) = S \rD(\cA_\fs)$.
Then we show that $\tcA_\fs$ admits a bounded $\Hinfty$-calculus by means of the theory of block operator matrices and perturbation theory.
Finally, we deduce the boundedness of the $\Hinfty$-calculus of $\cA_\fs$ by invoking \autoref{lem:preservation of results under sim trafe}.

In the following, we consider
\begin{equation}\label{eq:decoupling matrices}
    S = \begin{pmatrix}
        \Id & - \cP D_{\lambda_0,u}\\
        0 & \Id
    \end{pmatrix} \tand S^{-1} = \begin{pmatrix}
        \Id & \cP D_{\lambda_0,u}\\
        0 & \Id
    \end{pmatrix}.
\end{equation}
From \eqref{eq:mapping props D_lambda}, we recall $D_{\lambda_0,u} \in \cL(\C^3 \times \C^3,\rW^{2,q}(\cF_0)^3)$, so we find $S \in \cL(\rX_0,\rY_0)$, $S^{-1} \in \cL(\rY_0,\rX_0)$ and
\begin{equation*}
    \rY_0 = S \rX_0 = \{(\cP \Tilde{u},\ell,\omega) \in \rL_\sigma^q(\cF_0) \times \C^3 \times \C^3\}.
\end{equation*}
Moreover, we calculate
\begin{equation}\label{eq:similarity trafo fluid-structure op}
    \tcA_\fs = S \cA_\fs S^{-1} = \begin{pmatrix}
        A_0 - \cP D_{\lambda_0,u} K^{-1} C_1 & \lambda_0 \cP D_{\lambda_0,u} - \cP D_{\lambda_0,u} (K^{-1} C_1 \cP D_{\lambda_0,u} + K^{-1} C_2)\\
        K^{-1} C_1 & K^{-1} C_1 \cP D_{\lambda_0,u} + K^{-1} C_2
    \end{pmatrix},
\end{equation}
and this operator has diagonal domain $\rD(\tcA_\fs) = \rW^{2,q}(\cF_0)^3 \cap \rW_0^{1,q}(\cF_0)^3 \cap \rL_\sigma^q(\cF_0) \times \C^3 \times \C^3$.

The disadvantage of the decoupled operator matrix is its complicated shape.
In order to handle this, we use the decomposition into $\tcA_\fs = \tcA_{\fs,\rI} + \cB_\fs$, where
\begin{equation*}
    \tcA_{\fs,\rI} = \begin{pmatrix}
        A_0 & \lambda_0 \cP D_{\lambda_0,u}\\
        K^{-1} C_1 & K^{-1} C_2
    \end{pmatrix} \tand \cB_\fs = \begin{pmatrix}
        - \cP D_{\lambda_0,u} K^{-1} C_1 & -\cP D_{\lambda_0,u} (K^{-1} C_1 \cP D_{\lambda_0,u} + K^{-1} C_2)\\
        0 & K^{-1} C_1 \cP D_{\lambda_0,u}
    \end{pmatrix}.
\end{equation*}

Next, we show that $K^{-1} C_2 \in \cL(\C^3 \times \C^3)$.
By the invertibility of $K$, this task reduces to verifying that $C_2 \in \cL(\C^3 \times \C^3)$.
To this end, we recall from \eqref{eq:mapping props D_lambda} that $D_{\lambda_0,u} \in \cL(\C^3 \times \C^3,\rW^{2,q}(\cF_0)^3)$.
Let us observe that the application of $\Id_3 - \cP$ does not affect the regularity, so $(\Id_3 - \cP) D_{\lambda_0,u}(\ell,\omega) \in \rW^{2,q}(\cF_0)^3$ and thus $\D((\Id_3 - \cP) D_{\lambda_0,u}(\ell,\omega)) \in \rW^{1,q}(\cF_0)^{3 \times 3}$.
Standard trace theory then allows us to conclude $\D((\Id_3 - \cP) D_{\lambda_0,u}(\ell,\omega)) \in \rW^{1-\nicefrac{1}{q},q}(\del\cF_0)^{3 \times 3}$.
In total, this reveals that $C_2(\ell,\omega) \in \C^3 \times \C^3$, or, in other words, $C_2 \in \cL(\C^3 \times \C^3)$.

In view of $-A_0 \in \Hinfty(\rL_\sigma^q(\cF_0))$ with $\phi_{-A_0}^\infty = 0$ by \autoref{lem:props Helmholtz and Stokes}(d), it remains to show the diagonal dominance of $\tcA_{\fs,\rI}$.
By the boundedness of $K^{-1} C_2$, we also require $\lambda_0 \cP D_{\lambda_0,u} \in \cL(\C^3 \times \C^3,\rL_\sigma^q(\cF_0))$.
This is implied by \eqref{eq:mapping props D_lambda}.
By \autoref{def:diag dom op matrix}, the last task for the diagonal dominance of $\tcA_{\fs,\rI}$ is to establish that~$K^{-1} C_1$ is relatively $A_0$-bounded.
Thanks to the invertibility of $K$, it is enough to show this property for~$C_1$.
For this purpose, let $\cP u \in \rD(A_0)$.
From standard trace theory, \autoref{lem:reg of Neumann op} and the continuity of~$N_R$ from $\rW_{\mdiv}^q(\cF_0)$ to $\widehat{\rW}^{1,q}(\cF_0) \cap \rL_0^q(\cF_R)$, we conclude that 
\begin{equation}\label{eq:first est of C_1}
    \| C_1 \cP u \|_{\C^3 \times \C^3} \le C \cdot \| \cP u \|_{\rW^{2,q}(\cF_0)} \le C\bigl(\| A_0 \cP u \|_{\rL_\sigma^q(\cF_0)} + \| \cP u \|_{\rL_\sigma^q(\cF_0)}\bigr).
\end{equation}
An application of \autoref{prop:op theoret props of block op matrices} then yields the existence of $\lambda \ge 0$ such that $-\tcA_{\fs,\rI} + \lambda \in \Hinfty(\rY_0)$ as well as~$\phi_{-\tcA_{\fs,\rI} + \lambda}^\infty = 0$. 

For the boundedness of the $\Hinfty$-calculus of $\tcA_\fs$, we need to treat $\cB_\fs$ as a perturbation as in \autoref{prop:pert of Hinfty-calculus}.
To this end, for $t \in (1,2-\nicefrac{1}{q})$ coming from \autoref{lem:lower reg Neumann problem}, let $\alpha = \frac{3}{2} - \frac{t}{2}$.
Note that this choice yields $\alpha \in (\nicefrac{1}{2}+\nicefrac{1}{2q},1)$. 
From the previous part of the proof together with \autoref{lem:rel of Hinfty with other concepts}(c) and well-known interpolation theory, we find that
\begin{equation*}
    \rD((-\tcA_{\fs,\rI}+\lambda)^\alpha) \cong \rH^{2 \alpha,q}(\cF_0)^3 \cap \rW_0^{1,q}(\cF_0)^3 \cap \rL_\sigma^q(\cF_0) \times \C^3 \times \C^3,
\end{equation*}
where $\lambda \ge 0$ is such that $-\tcA_{\fs,\rI} + \lambda \in \Hinfty(\rY_0)$.
For such $\alpha$, let $(\cP u,\ell,\omega) \in \rD((-\tcA_{\fs,\rI}+\lambda)^\alpha)$.
The goal now is to establish the relative boundedness of the operator $\cB_\fs$ with respect to a fractional power of~$\tcA_{\fs,\rI}$.
We start by discussing the operator $C_1$ from \eqref{eq:matrix K and ops C_1 and C_2}, and we further split it into
\begin{equation*}
    C_1 \cP u = C_{1,1} \cP u + C_{1,2} \cP u \coloneqq \begin{pmatrix}
            -2 \int_{\del \cS_0} \D(\cP u) n \rd \Gamma\\
            -2 \int_{\del \cS_0} y \times \D(\cP u) n \rd \Gamma
        \end{pmatrix} + \begin{pmatrix}
            \int_{\del \cS_0} N_R(\Delta \cP u)n \rd \Gamma\\
            \int_{\del \cS_0} y \times N_R(\Delta \cP u)n \rd \Gamma
        \end{pmatrix}.
\end{equation*}
For the first term, we use usual trace theory and $\rD((-A_0)^\alpha) \cong \rH^{2\alpha,q}(\cF_0)^3 \cap \rL_\sigma^q(\cF_0) \hookrightarrow \rW^{1+\nicefrac{1}{q}+\eps,q}(\cF_0)^3$, for sufficiently small $\eps > 0$, to deduce that
\begin{equation*}
    | C_{1,1} \cP u | \le C \cdot \| \cP u \|_{\rW^{1+\nicefrac{1}{q}+\eps,q}(\cF_0)} \le C \cdot \| (-A_0)^\alpha \cP u \|_{\rL_\sigma^q(\cF_0)}.
\end{equation*}
On the other hand, for $C_{1,2}$, we invoke \autoref{lem:lower reg Neumann problem} for the choice $t = 3 - 2 \alpha$, where we recall that $2 \alpha - 1 > \nicefrac{1}{q}$, together with mapping properties of the Laplacian and the aforementioned fractional powers of the Stokes operator to infer that
\begin{equation*}
    |C_{1,2} \cP u| \le C \cdot \| N_R(\Delta \cP u) \|_{\rW^{\nicefrac{1}{q}+\eps,q}(\cF_0)} \le C \cdot \| \cP u \|_{\rH^{2\alpha}(\cF_0)} \le C \cdot \| (-A_0)^\alpha \cP u \|_{\rL_\sigma^q(\cF_0)}.
\end{equation*}
This reveals that $|C_1 \cP u| \le C \cdot  \| (-A_0)^\alpha \cP u \|_{\rL_\sigma^q(\cF_0)}$.
Additionally exploiting the mapping properties of~$D_{\lambda_0,u}$ from \eqref{eq:mapping props D_lambda}, we thus find that
\begin{equation}\label{eq:pert left top entry}
    \| \cP D_{\lambda_0,u} K^{-1} C_1 \cP u \|_{\rL_\sigma^q(\cF_0)} \le C \cdot |C_1 \cP u| \le C \cdot \| (-A_0)^\alpha \cP u \|_{\rL_\sigma^q(\cF_0)}.
\end{equation}
Next, using the first estimate from \eqref{eq:first est of C_1} in conjunction with \eqref{eq:mapping props D_lambda}, we argue that
\begin{equation}\label{eq:pert right bottom entry}
    \| K^{-1} C_1 \cP D_{\lambda_0,u}(\ell,\omega) \|_{\C^3 \times \C^3} \le C \cdot \| \cP D_{\lambda_0,u}(\ell,\omega) \|_{\rW^{2,q}(\cF_0)} \le C \cdot \| (\ell,\omega) \|_{\C^3 \times \C^3}.
\end{equation}
Combining \eqref{eq:pert right bottom entry} with $C_2 \in \cL(\C^3 \times \C^3)$ and \eqref{eq:mapping props D_lambda}, we obtain
\begin{equation}\label{eq:pert right top entry}
    \| \lambda_0 \cP D_{\lambda_0,u} (K^{-1} C_1 \cP D_{\lambda_0,u}(\ell,\omega) + K^{-1} C_2(\ell,\omega)) \|_{\rL_\sigma^q(\cF_0)} \le C \cdot \| (\ell,\omega) \|_{\C^3 \times \C^3}.
\end{equation}
For $\alpha = (\frac{3}{2} - \frac{t}{2}) \in (\nicefrac{1}{2}+\nicefrac{1}{2q},1)$, a concatenation of \eqref{eq:pert left top entry}, \eqref{eq:pert right bottom entry}, and \eqref{eq:pert right top entry} leads to the existence of~$C > 0$ so that
\begin{equation*}
    \| \cB_\fs (\cP u,\ell,\omega) \|_{\rY_0} \le C\bigl(\| (-\tcA_{\fs,\rI} + \lambda)^\alpha (\cP u,\ell,\omega) \|_{\rY_0} + \| (\cP u,\ell,\omega) \|_{\rY_0}\bigr), \tfor (\cP u,\ell,\omega) \in \rD((-\tcA_{\fs,\rI}+\lambda)^\alpha).
\end{equation*}
Let us observe that \eqref{eq:pert left top entry}, \eqref{eq:pert right bottom entry}, and \eqref{eq:pert right top entry} can also be used to show that $-(\tcA_{\fs,\rI} + \cB_\fs) + \lambda$ is sectorial and invertible with spectral angle equal to zero for $\lambda \ge 0$ sufficiently large thanks to standard perturbation theory of sectorial operators, see, e.g., \cite[Corollary~3.1.6]{PS:16}.
\autoref{prop:pert of Hinfty-calculus} then implies that the operator $\tcA_\fs = \tcA_{\fs,\rI} + \cB_\fs$ satisfies $-\tcA_\fs + \lambda \in \Hinfty(\rY_0)$, with $\phi_{-\tcA_\fs + \lambda}^\infty = 0$, for $\lambda \ge 0$ sufficiently large.
The assertion of the theorem is then a consequence of \autoref{lem:preservation of results under sim trafe} applied to the similarity transform from~\eqref{eq:similarity trafo fluid-structure op}.
\end{proof}

Next, we state several interesting consequences of \autoref{thm:bdd H00-calculus of fluid-structure op incompr Newtonian}, including the stochastic maximal regularity in~(c).
The assertions in~(a) and~(b) follow from the respective assertions in \autoref{lem:rel of Hinfty with other concepts} upon noting that $\rX_0$ is a UMD space, since it is isomorphic to a closed subspace of $\rL^q$ on a $\sigma$-finite measure space by \autoref{lem:ground space isomorphic to closed subspace of Lq}.
The latter observation also yields the assertion of~(c) by \autoref{prop:stoch max reg via Hinfty subset of Lq}.
Let us observe that the assertion in~(a) can also be obtained for $q \in (1,\nicefrac{3}{2}]$ upon slightly modifying the arguments in the proof of \autoref{thm:bdd H00-calculus of fluid-structure op incompr Newtonian} and making use of \autoref{lem:reg of Neumann op} instead of \autoref{lem:lower reg Neumann problem}.
This is possible by virtue of the simpler perturbation theory for maximal $\rL^p$-regularity characterized in terms of $\cR$-sectoriality in the situation of UMD spaces.
We omit the details here.

\begin{cor}\label{cor:cons of H00-calc of fs op incompr Newtonian}
Consider $q \in (\nicefrac{3}{2},\infty)$ and $\lambda \ge 0$ as in \autoref{thm:bdd H00-calculus of fluid-structure op incompr Newtonian}.
Then the operator $\cA_\fs$ from \eqref{eq:fluid-structure op incompr Newtonian} satisfies the following properties.
\begin{enumerate}[(a)]
    \item The operator $\cA_\fs - \lambda$ generates a bounded analytic semigroup $(\mre^{t (\cA_\fs - \lambda)})_{t \ge 0}$ of angle $\nicefrac{\pi}{2}$ on $\rX_0$.
    In particular, $\Afs$ generates a (strongly continuous) analytic semigroup on $\rX_0$.
    Furthermore, $-\cA_\fs + \lambda$ has maximal $\rL^p$-regularity on $\rX_0$, i.e., for all $(\cP f_1,f_2,f_3) \in \rL^p(\R_+;\rX_0)$, there is unique $(\cP u,\ell,\omega) \in \rW^{1,p}(\R_+;\rX_0) \cap \rL^p(\R_+;\rD(\cA_\fs))$ solving \eqref{eq:lin ext} with $u_0 = \ell_0 = \omega_0 = 0$.
    \item For $\alpha \in (0,1)$ and $\rX_\alpha$ related to $-\cA_\fs + \lambda$ as revealed in \autoref{lem:rel of Hinfty with other concepts}, we have $\rX_\alpha \cong [\rX_0,\rX_{\cA_\fs}]_\alpha$.
    \item Let $p$, $q \in [2,\infty)$ and $\kappa \in [0,\nicefrac{p}{2} - 1) \cup \{0\}$.
    Then $(-\Afs,0) \in \cSMR_{p,\kappa}^\bullet$ as in \autoref{def:stoch max reg}(b).
\end{enumerate}
\end{cor}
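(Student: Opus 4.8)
The plan is to deduce all three items directly from \autoref{thm:bdd H00-calculus of fluid-structure op incompr Newtonian}, combined with the general results collected in \autoref{ssec:Hinfty block op and perturbation} and \autoref{ssec:stoch max reg and semilin SPDEs}; the analytic work has already been done in the theorem, so what remains is to check the structural hypotheses on the ground space and to translate the abstract statements back to the coupled system \eqref{eq:lin ext}. First I would fix $\lambda \ge 0$ as in \autoref{thm:bdd H00-calculus of fluid-structure op incompr Newtonian}, so that $-\cA_\fs + \lambda \in \Hinfty(\rX_0)$ with $\phi_{-\cA_\fs + \lambda}^\infty = 0$, and record that $\rX_0$ is a UMD space: by \autoref{lem:ground space isomorphic to closed subspace of Lq} it is isomorphic to a closed subspace of $\rL^q$ over a $\sigma$-finite measure space, the spaces $\rL^q$ are UMD for $q \in (1,\infty)$, and the UMD property — as well as type $2$ when $q \ge 2$ — is inherited by closed subspaces.

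For part~(a) I would apply \autoref{lem:rel of Hinfty with other concepts}(a) to $A \coloneqq -\cA_\fs + \lambda$. Since $\phi_A^\infty = 0 < \nicefrac{\pi}{2}$, the operator $-A = \cA_\fs - \lambda$ generates a bounded analytic semigroup of angle at most $\nicefrac{\pi}{2}$, and multiplying by $\mre^{-\lambda t}$ shows that $\cA_\fs$ itself generates a strongly continuous analytic semigroup on $\rX_0$. As $\rX_0$ is UMD, the same lemma yields maximal $\rL^p$-regularity of $A$ on $\rX_0$, i.e.\ unique solvability of $\del_t w + A w = G$, $w(0) = 0$, in $\rW^{1,p}(\R_+;\rX_0) \cap \rL^p(\R_+;\rD(\cA_\fs))$. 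To finish, I would identify this abstract Cauchy problem with \eqref{eq:lin ext} for vanishing initial data: writing $w = (\cP u,\ell,\omega)$, reconstructing $(\Id - \cP) u$ and the pressure $p$ via the identities of \autoref{lem:equiv reform resolvent problem fluid-structure op ext} in their time-dependent form, and using $\rD(\cA_\fs) = \rX_1$, one obtains a quadruple $(u,p,\ell,\omega)$ of the claimed regularity solving \eqref{eq:lin ext}; uniqueness follows from the same equivalence.

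For part~(b) I would use \autoref{lem:rel of Hinfty with other concepts}(b) with $A = -\cA_\fs + \lambda \in \Hinfty(\rX_0)$, giving $\rX_\alpha = \rD(A^\alpha) \cong [\rX_0,\rX_A]_\alpha$; since $A$ and $\cA_\fs$ differ by the bounded operator $\lambda\,\Id$, one has $\rD(A) = \rD(\cA_\fs)$ with equivalent graph norms, so $\rX_A = \rX_{\cA_\fs}$ and $\rX_\alpha \cong [\rX_0,\rX_{\cA_\fs}]_\alpha$. For part~(c) I would invoke \autoref{prop:stoch max reg via Hinfty subset of Lq}: by \autoref{lem:ground space isomorphic to closed subspace of Lq}, $\rX_0$ is isomorphic to a closed subspace of $\rL^q(\cO)$ for a $\sigma$-finite measure space $\cO$ with $q \in [2,\infty)$, and by \autoref{thm:bdd H00-calculus of fluid-structure op incompr Newtonian} there is $\lambda \ge 0$ with $\lambda + (-\cA_\fs) \in \Hinfty(\rX_0)$ and $\phi_{\lambda - \cA_\fs}^\infty = 0 < \nicefrac{\pi}{2}$; the proposition then gives $(-\cA_\fs,0) \in \cSMR_{p,\kappa}^\bullet$ for all $p \in (2,\infty)$ and $\kappa \in [0,\nicefrac{p}{2} - 1)$, and additionally $(-\cA_\fs,0) \in \cSMR_{2,0}^\bullet$ when $q = 2$, which together is the claim for all $p$, $q \in [2,\infty)$ and $\kappa \in [0,\nicefrac{p}{2}-1) \cup \{0\}$.

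Almost everything here is hypothesis-checking against earlier results; the only genuinely delicate step I anticipate is the translation in part~(a) between the abstract equation for $-\cA_\fs + \lambda$ and the PDE system \eqref{eq:lin ext}, namely correctly recovering $(\Id - \cP)u$ and the pressure from $(\cP u,\ell,\omega)$ and confirming that the resulting quadruple has the stated regularity and solves \eqref{eq:lin ext}. Everything else should follow immediately from the cited statements.
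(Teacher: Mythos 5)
Your proposal is correct and follows essentially the same route as the paper: the paper likewise derives (a) and (b) from \autoref{lem:rel of Hinfty with other concepts} after noting that $\rX_0$ is UMD because it is isomorphic to a closed subspace of $\rL^q$ (\autoref{lem:ground space isomorphic to closed subspace of Lq}), and obtains (c) from \autoref{prop:stoch max reg via Hinfty subset of Lq}. Your extra care in translating the abstract Cauchy problem back to \eqref{eq:lin ext} via \autoref{lem:equiv reform resolvent problem fluid-structure op ext} is a welcome elaboration of a step the paper leaves implicit.
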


\section{Proof of the main results}\label{sec:proof main results}

This section is dedicated to showing the local strong well-posedness results \autoref{thm:loc strong wp stoch fsi} and \autoref{thm:loc strong wp stoch fsi Hilbert space} and the blow-up criteria \autoref{thm:blow-up crit stoch fsi} and \autoref{thm:blow-up crit stoch fsi Hilbert space}.
The essential idea is to reformulate~\eqref{eq:stoch forced fsi problem intro} in terms of the fluid-structure operator $\cA_\fs$ from \eqref{eq:fluid-structure op incompr Newtonian}, and to show that the resulting problem fits into the framework presented in \autoref{ssec:stoch max reg and semilin SPDEs} thanks to the stochastic maximal regularity as well as nonlinear estimates. 

First, we deal with the interpolation spaces.
More precisely, we prove \autoref{lem:interpol spaces}.
In this context, there are two main obstacles, namely, the non-diagonal domain of the fluid-structure operator, and that the Helmholtz projection generally does not preserve Dirichlet boundary conditions.

\begin{proof}[Proof of~\autoref{lem:interpol spaces}]
We start by discussing how to handle the coupling condition in the function spaces.
For this purpose, we invoke the decoupling argument from the proof of \autoref{thm:bdd H00-calculus of fluid-structure op incompr Newtonian}.
The (non-diagonal) domain $\rX_1 = \rD(\Afs)$ of the fluid-structure operator $\Afs$ and the (diagonal) domain of the decoupled fluid-structure operator $\tcA_\fs$ are linked via $\rD(\tcA_\fs) = S \rD(\Afs)$, where $S$ has been made precise in \eqref{eq:decoupling matrices}.
We also recall the space $\rY_0$ from \eqref{eq:Y_0} and set $\rY_1 \coloneqq \rD(\tcA_\fs) = \rW^{2,q}(\cF_0)^3 \cap \rW_0^{1,q}(\cF_0)^3 \cap \rL_\sigma^q(\cF_0) \times \C^3 \times \C^3$.

Arguing in a similar way as in \cite[Lemma~5.3]{BBH:23}, we find that the interpolation spaces in the coupled setting can be recovered from those in the decoupled setting via the matrix $S^{-1}$ by
\begin{equation*}
    [\rX_0,\rX_1]_\theta = S^{-1}([\rY_0,\rY_1]_\theta) \tand (\rX_0,\rX_1)_{\theta,p} = S^{-1}((\rY_0,\rY_1)_{\theta,p}), \tfor \theta \in (0,1) \tand p \in (1,\infty).
\end{equation*}
Below, for $i \in \{1,2\}$, we will also write $\rY_i = \rY_i^u \times \rY_i^\ell \times \rY_i^\omega$ for the decoupled spaces.
We then get the identity $[\rY_0,\rY_1]_\theta = \left\{(\cP u,\ell,\omega) \in [\rY_0^u,\rY_1^u]_\theta \times \C^3 \times \C^3\right\}$.
This results in
\begin{equation}\label{eq:coupled complex interpol}
    [\rX_0,\rX_1]_\theta = S^{-1} ([\rY_0,\rY_1]_\theta) = \left\{(\cP u,\ell,\omega) \in \rY_0 : \cP u - \cP D_{\lambda_0,u}(\ell,\omega) \in [\rY_0^u,\rY_1^u]_\theta\right\}, \tfor \theta \in (0,1).
\end{equation}
A similar argument as above for the shape of $\rX_1$ shows that the space $\rX_\beta$ includes the interface condition provided $\beta$ is sufficiently large so that elements in $[\rY_0^u,\rY_1^u]_\beta$ admit a trace, which is the case for $\beta > \nicefrac{1}{2q}$.
Likewise, for the real interpolation spaces with $\theta \in (0,1)$ and $p \in (1,\infty)$, it follows that
\begin{equation}\label{eq:coupled real interpol}
    (\rX_0,\rX_1)_{\theta,p} = S^{-1}((\rY_0,\rY_1)_{\theta,p}) = \left\{(\cP u,\ell,\omega) \in \rY_0 : \cP u - \cP D_{\lambda_0,u}(\ell,\omega) \in (\rY_0^u,\rY_1^u)_{\theta,p}\right\},
\end{equation}
where the interface condition enters if $\theta > \nicefrac{1}{2q}$.
Note that for $\theta < \nicefrac{1}{2q}$, we still have $u \cdot n = (\ell + \omega \times y) \cdot n$ by the characterization of the space $\rL_\sigma^q(\cF_0)$ from \autoref{lem:props Helmholtz and Stokes}(c).
In summary, the computation of the interpolation spaces reduces to the study of the non-coupled ones related to $\rY_0$ and $\rY_1$.

Concerning this reduced problem, the main challenge is to find a projection $Q$ on $\rL^q(\cF_0)^3$ such that for the domain $\rD(\DeltaD) \coloneqq \rW^{2,q}(\cF_0)^3 \cap \rW_0^{1,q}(\cF_0)^3$ of the Dirichlet Laplacian $\DeltaD u \coloneqq \Delta u$ for $u \in \rD(\DeltaD)$, it holds that $Q \rD(\DeltaD) = \rD(\DeltaD) \cap \rR(Q)$.
Let us observe that this can be done in a similar manner as in \cite[Section~4]{PW:18}.
For the (shifted) Dirichlet Laplacian $\DeltaD^1 \coloneqq \Id - \DeltaD$ with $\rD(\DeltaD^1) = \rW^{2,q}(\cF_0)^3 \cap \rW_0^{1,q}(\cF_0)^3$ and the shifted negative Stokes operator $A_1 \coloneqq \Id - A_0$ with $\rD(A_1) = \rW^{2,q}(\cF_0)^3 \cap \rW_0^{1,q}(\cF_0)^3 \cap \rL_\sigma^q(\cF_0)$.
The projection $Q \colon \rD(\DeltaD^1) = \rD(\DeltaD) \to \rD(A_0) = \rD(A_1)$ is then given by $Q u \coloneqq A_1^{-1} \cP \DeltaD^1 u$.

One can argue that $Q$ admits a unique extension $\tQ \in \cL(\rL^q(\cF_0)^3,\rL_\sigma^q(\cF_0))$ that is also a projection such that $\left. \tQ \right|_{\rL_\sigma^q(\cF_0)} = \left. \Id \right|_{\rL_\sigma^q(\cF_0)}$.
By $\tQ \rD(\DeltaD) = \rD(\DeltaD) \cap \rR(\tQ) = \rD(A_0)$, we get $\rL^q(\cF_0)^3 = \rL_\sigma^q(\cF_0) \oplus \rN(\tQ)$ and $\rD(\DeltaD) = [\rD(\DeltaD) \cap \rR(\tQ)] \oplus [\rD(\DeltaD) \cap \rN(\tQ)]$.
The well-known boundedness of the imaginary powers of $\DeltaD^1$ together with interpolation theoretic arguments, see \cite[Theorem~1.17.1.1]{Tri:78}, then yields
\begin{equation*}
    [\rL_\sigma^q(\cF_0),\rD(A_0)]_\theta = [\tQ \rL^q(\cF_0)^3,\tQ \rD(\DeltaD^1)]_\theta = \tQ [\rL^q(\cF_0)^3,\rD(\DeltaD^1)]_\theta = \rD((\DeltaD^1)^\theta) \cap \rR(\tQ),
\end{equation*}
for $\theta \in (0,1)$, and likewise for the real interpolation spaces.
In view of the reduction argument in the first part of the proof, this shows the assertion of the lemma.
\end{proof}

Next, we rewrite the stochastic fluid-rigid body interaction problem \eqref{eq:stoch forced fsi problem intro} as a bilinear SPDE of the form as in \eqref{eq:bilin SPDE}.
In fact, recalling the fluid-structure operator $\Afs$ from \eqref{eq:fluid-structure op incompr Newtonian}, similarly as in \eqref{lem:equiv reform resolvent problem fluid-structure op ext}, for~$w = (\cP v, \ell,\omega)$, we find that \eqref{eq:stoch forced fsi problem intro} is equivalent with
\begin{equation}\label{eq:stoch fsi as bilin SPDE}
    \left\{
    \begin{aligned}
        \rd w - \Afs w \rd t 
        &= F(w) \rd t + (B w + f) \rd W_t^n,\\
        w(0)
        &= \begin{pmatrix}
            \cP v_0\\ \ell_0\\ \omega_0
        \end{pmatrix},
    \end{aligned}
    \right.
\end{equation}
where 
\begin{equation}\label{eq:terms on RHS}
    F(w) = \begin{pmatrix}
        -\cP[((v-\ell) \cdot \nabla)v]\\ 0\\ 0
    \end{pmatrix}, \enspace (B w) \mre_n = \begin{pmatrix}
        \cP[(b_n \cdot \nabla)v]\\ 0\\ 0
    \end{pmatrix} \tand f \mre_n = \begin{pmatrix}
            \cP f_v^n\\ \Tilde{f}_\ell^n\\ \Tilde{f}_\omega^n
        \end{pmatrix},
\end{equation}
with $\binom{\Tilde{f}_\ell^n}{\Tilde{f}_\omega^n} = K^{-1} \binom{f_\ell^n}{f_\omega^n}$, and $K$ has been made precise in \eqref{eq:matrix K and ops C_1 and C_2}.

At this stage, we begin with the verification that \eqref{eq:stoch fsi as bilin SPDE} lies within the scope of \autoref{ssec:stoch max reg and semilin SPDEs}.
First, for the ground space $\rX_0$ as made precise in \eqref{eq:ground space fluid-structure op}, we recall from \autoref{lem:ground space isomorphic to closed subspace of Lq} that it is isomorphic to a closed subspace of $\rL^q$ on a $\sigma$-finite measure space, so for $q \ge 2$, we especially deduce that $\rX_0$ is a UMD space with type $2$, see also \cite[Chapter~7]{HvNVW:17} for more details on this property.

After establishing this basic property, we next show that $(-\Afs,B) \in \cSMR_{p,\kappa}^\bullet$, where the fluid-structure operator~$\Afs$ has been introduced in \eqref{eq:fluid-structure op incompr Newtonian}, and $B$ is as made precise in \eqref{eq:terms on RHS} indeed admits stochastic maximal regularity when assuming the transport noise term to be sufficiently small.

\begin{lem}\label{lem:stoch max reg of lin part}
Let $p$, $q \in [2,\infty)$, $\kappa \in [0,\nicefrac{p}{2}-1) \cup \{0\}$, and suppose that there exists $\eps > 0$ such that~$\| (b_n)_{n \ge 1} \|_{\rW^{1,\infty}(\cF_0;\ell^2)} < \eps$.
If $\eps > 0$ is sufficiently small, it holds that $(-\Afs,B) \in \cSMR_{p,\kappa}^\bullet$.
\end{lem}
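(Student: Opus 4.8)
The plan is to combine \autoref{cor:cons of H00-calc of fs op incompr Newtonian}(c), which gives $(-\Afs,0) \in \cSMR_{p,\kappa}^\bullet$ (and, upon shrinking time intervals, also $(-\Afs,0) \in \cSMR_{p,\kappa}$), with the perturbation result \autoref{lem:pert SMR} applied to the transport noise operator $B$ from \eqref{eq:terms on RHS}. The only thing to check is thus that $B \in \cL(\rX_1,\gamma(\ell^2,\rX_{\nicefrac{1}{2}}))$ and that it satisfies the smallness-modulo-lower-order bound
\begin{equation*}
    \| B w \|_{\gamma(\ell^2,\rX_{\nicefrac{1}{2}})} \le C_B \cdot \| w \|_{\rX_1} + L_B \cdot \| w \|_{\rX_0}, \tfor w \in \rX_1,
\end{equation*}
with $C_B \in (0,1)$ arbitrarily small (which is exactly where the hypothesis $\| (b_n)_{n \ge 1} \|_{\rW^{1,\infty}(\cF_0;\ell^2)} < \eps$ comes in).

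First I would recall from the $\gamma$-radonifying identification in \autoref{ssec:prelims} that, since $\rX_{\nicefrac{1}{2}}$ is (isomorphic to a closed subspace of) a Bessel potential space $\rH^{1,q}$-type space over $\cF_0$, one has $\gamma(\ell^2,\rX_{\nicefrac{1}{2}}) \cong \rH^{1,q}(\cF_0;\ell^2)$-type with the coupling/compatibility conditions built in, so that estimating $\| B w \|_{\gamma(\ell^2,\rX_{\nicefrac{1}{2}})}$ amounts to estimating the $\rX_{\nicefrac{1}{2}}$-norm of the $\ell^2$-valued field $n \mapsto \cP[(b_n \cdot \nabla)v]$. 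Using that $\cP$ is bounded on $\rH^{1,q}_{c,\sigma}$ (it maps into $\rX_{\nicefrac{1}{2}}$ by \autoref{lem:interpol spaces} and \autoref{lem:props Helmholtz and Stokes}), it suffices to bound $\| ((b_n \cdot \nabla)v)_{n} \|_{\rH^{1,q}(\cF_0;\ell^2)}$. The product rule gives pointwise, for the $\ell^2$-norm in $n$,
\begin{equation*}
    \bigl\| ((b_n \cdot \nabla) v)_n \bigr\|_{\ell^2} \le \| (b_n)_n \|_{\ell^2} \, |\nabla v|, \quad \bigl\| \nabla((b_n \cdot \nabla) v)_n \bigr\|_{\ell^2} \lesssim \| (\nabla b_n)_n \|_{\ell^2} |\nabla v| + \| (b_n)_n \|_{\ell^2} |\nabla^2 v|,
\end{equation*}
so taking $\rL^q(\cF_0)$-norms and using $\| (b_n)_n \|_{\rW^{1,\infty}(\cF_0;\ell^2)} < \eps$ yields $\| B w \|_{\gamma(\ell^2,\rX_{\nicefrac{1}{2}})} \le C \eps \| v \|_{\rW^{2,q}(\cF_0)} + C \eps \| \nabla v \|_{\rL^q(\cF_0)} \le C\eps \| w \|_{\rX_1}$; in particular $B \in \cL(\rX_1,\gamma(\ell^2,\rX_{\nicefrac{1}{2}}))$ and, by interpolation (or directly, absorbing the $\| \nabla v \|_{\rL^q}$ term via Ehrling/Young into a small multiple of $\| w \|_{\rX_1}$ plus a large multiple of $\| w \|_{\rX_0}$), one gets the displayed bound with $C_B = C\eps$ and some $L_B = L_B(\eps)$.

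Choosing $\eps$ small enough that $C\eps < 1$ and also smaller than the (absolute) threshold $C_B$ required in \autoref{lem:pert SMR}, that lemma applies with $B$ there equal to $0$ and $B_0$ equal to our $B$, giving $(-\Afs, 0 + B) = (-\Afs,B) \in \cSMR_{p,\kappa}^\bullet$ in the case $p > 2$; for $p = 2$, $\kappa = 0$ the same argument works with the $\delta \in (0,\nicefrac12)$ clause of \autoref{lem:pert SMR}. The main obstacle — really the only non-bookkeeping point — is the clean identification $\gamma(\ell^2,\rX_{\nicefrac{1}{2}}) \cong \rH^{1,q}$-with-constraints and the attendant boundedness of $\cP$ at the $\rH^{1,q}$-level on the exterior domain $\cF_0$ with compact $\rC^3$-boundary, together with verifying the compatibility (divergence-free and boundary) conditions are preserved so that $Bw$ genuinely lands in $\gamma(\ell^2,\rX_{\nicefrac12})$ and not merely in $\gamma(\ell^2,\rH^{1,q})$; once this structural point is in place, the product-rule estimate and the invocation of \autoref{lem:pert SMR} are routine.
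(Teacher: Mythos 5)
Your proposal is correct and follows essentially the same route as the paper: cite \autoref{cor:cons of H00-calc of fs op incompr Newtonian}(c) for $(-\Afs,0)\in\cSMR_{p,\kappa}^\bullet$, establish $\| B w \|_{\gamma(\ell^2,\rX_{\nicefrac{1}{2}})} \le C\eps\,\| w \|_{\rX_1}$ via the identification $\gamma(\ell^2,\rW^{1,q}(\cF_0))=\rW^{1,q}(\cF_0;\ell^2)$ and the $\rW^{1,\infty}$-smallness of $(b_n)$, and then invoke \autoref{lem:pert SMR}. The paper's proof is terser (it obtains the bound with no lower-order term, so no Ehrling/Young absorption is needed), but the substance is identical.
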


\begin{proof}
First, from \autoref{cor:cons of H00-calc of fs op incompr Newtonian}, we recall that $(-\Afs,0) \in \cSMR_{p,\kappa}^\bullet$.
In addition, it also readily follows that $(-\Afs,0) \in \cSMR_p$.
In the following, we employ the notation $(B w) \mre_n = (\cP[(b_n \cdot \nabla)v],0,0)^\top$.
Invoking the shape of $\rX_{\nicefrac{1}{2}}$ from \eqref{eq:X_1/2}, recalling from \autoref{ssec:prelims} that $\gamma(\ell^2,\rW^{1,q}(\cF_0)) = \rW^{1,q}(\cF_0;\ell^2)$, and making use of the assumption, for $w \in \rX_1$, we infer that
\begin{equation*}
    \begin{aligned}
        \| B w \|_{\gamma(\ell^2,\rX_{\nicefrac{1}{2}})}
        &\le C \cdot \left\| \sum_{n \ge 1} [(b_n \cdot \nabla)v] \right\|_{\gamma(\ell^2,\rW^{1,q}(\cF_0))}
        \le C \cdot \left\| \sum_{n \ge 1} [(b_n \cdot \nabla)v] \right\|_{\rW^{1,q}(\cF_0;\ell^2)}\\
        &\le \| (b_n)_{n \ge 1} \|_{\rW^{1,\infty}(\cF_0;\ell^2)} \cdot \| v \|_{\rW^{2,q}(\cF_0)}
        \le C \eps \cdot \| w \|_{\rX_1}.
    \end{aligned}
\end{equation*}
Hence, the map $B$ satisfies the desired mapping properties from \autoref{lem:pert SMR}, and we deduce from the latter lemma that indeed, $(-\Afs,B) \in \cSMR_{p,\kappa}^\bullet$.
\end{proof}

In the lemma below, we discuss the estimates of the bilinear term $F$ from \eqref{eq:terms on RHS}.

\begin{lem}\label{lem:nonlin est}
Recall $F(w) = G(w,w)$ from \eqref{eq:terms on RHS} and the shape of $\rX_\beta = [\rX_0,\rX_1]_\beta$ from \autoref{lem:interpol spaces}.
\begin{enumerate}[(a)]
    \item Let $q \in (1,3)$.
    Then for $\beta \ge \frac{1}{4}(1+\frac{3}{q}) \in (0,1)$, the map $G \colon \rX_\beta \times \rX_\beta \to \rX_0$ is bounded.
    \item If $q \ge 3$, then for $\beta \in [\frac{1}{2},1)$, the map $G \colon \rX_\beta \times \rX_\beta \to \rX_0$ is bounded.
\end{enumerate}    
\end{lem}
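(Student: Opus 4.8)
The plan is to reduce the bilinearity estimate for $F(w)=G(w,w)$ to a purely PDE-side estimate for the fluid component, since the rigid-body components of $F$ are zero. Writing $w=(\cP v,\ell,\omega)$, the only nontrivial contribution to $F$ is $-\cP[((v-\ell)\cdot\nabla)v]$, and since $\cP$ is bounded on $\rL^q(\cF_0)^3$, it suffices to estimate $\|((v-\ell)\cdot\nabla)v\|_{\rL^q(\cF_0)}$ by $\|w\|_{\rX_\beta}^2$. First I would record that, by \autoref{lem:interpol spaces}, the $\rX_\beta$-norm controls $\|v\|_{\rH^{2\beta,q}(\cF_0)}$ together with $|\ell|+|\omega|$ (with the coupling condition present once $\beta>\tfrac{1}{2q}$, which will always hold in the relevant range). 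So the task is the standard convective-term bound $\|(u\cdot\nabla)v\|_{\rL^q}\lesssim \|u\|_{\rH^{2\beta,q}}\|v\|_{\rH^{2\beta,q}}$, plus the lower-order term $\|(\ell\cdot\nabla)v\|_{\rL^q}\le|\ell|\,\|\nabla v\|_{\rL^q}\lesssim\|w\|_{\rX_\beta}^2$, which is harmless because $2\beta\ge 1$ in both cases (a) and (b).

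The heart is therefore the splitting $(u\cdot\nabla)v$: I would put $u\in\rL^{r_1}$, $\nabla v\in\rL^{r_2}$ with $\tfrac1q=\tfrac1{r_1}+\tfrac1{r_2}$, and use Sobolev embeddings $\rH^{2\beta,q}\hookrightarrow\rL^{r_1}$ and $\rH^{2\beta-1,q}\hookrightarrow\rL^{r_2}$ in $\R^3$. The embedding $\rH^{2\beta,q}(\cF_0)\hookrightarrow\rL^{r_1}$ holds with $\tfrac1{r_1}=\tfrac1q-\tfrac{2\beta}{3}$ (provided $2\beta<\tfrac3q$, otherwise $r_1=\infty$ is fine) and $\rH^{2\beta-1,q}(\cF_0)\hookrightarrow\rL^{r_2}$ with $\tfrac1{r_2}=\tfrac1q-\tfrac{2\beta-1}{3}$. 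Adding these exponents gives $\tfrac1{r_1}+\tfrac1{r_2}=\tfrac2q-\tfrac{4\beta-1}{3}$, and requiring this to be $\le \tfrac1q$ (so that $\rL^{r_1}\cdot\rL^{r_2}\hookrightarrow\rL^q$ via Hölder, with room to spare when the inequality is strict) yields exactly $\beta\ge\tfrac14(1+\tfrac3q)$. For case (a), $q\in(1,3)$, this is the stated threshold; one checks $\tfrac14(1+\tfrac3q)\in(0,1)$ and also that it exceeds $\tfrac1{2q}$ so the trace/coupling condition is consistent. For case (b), $q\ge 3$, taking $\beta\ge\tfrac12$ gives $2\beta\ge 1$, so $\rH^{2\beta-1,q}\hookrightarrow\rL^q$ and (for $q>3$) $\rH^{2\beta,q}\hookrightarrow\rL^\infty$ when $2\beta>\tfrac3q$, while for $q=3$, $\beta=\tfrac12$ one uses $\rH^{1,3}\hookrightarrow\rL^{r}$ for every finite $r$ and pairs with $\rL^{3}$ losing an $\eps$; in all sub-cases $\tfrac14(1+\tfrac3q)\le\tfrac12$, so the threshold from (a) is automatically met and $\beta\in[\tfrac12,1)$ works.

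I would present this as: (i) reduce to the fluid component and invoke boundedness of $\cP$ on $\rL^q$; (ii) by \autoref{lem:interpol spaces}, bound the $\rX_\beta$-norm from below by the relevant $\rH^{2\beta,q}$-norm of $v$ plus $|(\ell,\omega)|$; (iii) estimate the genuinely bilinear term $(v\cdot\nabla)v$ by Hölder and the two Sobolev embeddings above, tracking the exponent balance to get the threshold on $\beta$; (iv) dispatch $(\ell\cdot\nabla)v$ by the trivial bound $|\ell|\,\|\nabla v\|_{\rL^q}$ since $2\beta\ge1$; and (v) separate the cases $q<3$ and $q\ge3$ only to phrase the conclusion cleanly. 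The main obstacle is bookkeeping at the endpoint: verifying that the Sobolev embedding constants are finite on the \emph{exterior} domain $\cF_0$ (rather than on $\R^3$), which requires that $\cF_0$ be a domain with, say, $\rC^1$ boundary so the usual extension/embedding theorems apply — this is available here since $\del\cS_0$ is of class $\rC^3$ — and handling the borderline exponent pairs (e.g. $q=3$, $\beta=\tfrac12$, or $2\beta=\tfrac3q$) where one must spend an arbitrarily small amount of Sobolev regularity to stay in $\rL^q$; since $\beta<1$ leaves slack up to $2\beta<2$ this is never tight in the interior of the stated ranges, and the closed endpoints are absorbed by the strict inequalities in the embeddings.
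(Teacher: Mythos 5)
Your argument is essentially the paper's: reduce to the convective term via boundedness of $\cP$ and \autoref{lem:interpol spaces}, split $((v-\ell)\cdot\nabla)v$ by H\"older into $\rL^{r_1}\cdot\rL^{r_2}$, apply the Sobolev embeddings $\rH^{2\beta,q}\hookrightarrow\rL^{r_1}$ and $\rH^{2\beta-1,q}\hookrightarrow\rL^{r_2}$, and balance exponents to land exactly on the threshold $\beta\ge\tfrac14(1+\tfrac3q)$, with the $|\ell|\,\|\nabla v\|_{\rL^q}$ term handled trivially since $2\beta\ge1$; the paper merely phrases the exponent balance by choosing $r$ so that the Sobolev indices of $\rL^{qr'}$ and $\rH^{1,qr}$ coincide, which is the same computation. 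The one soft spot you flag, the endpoint $q=3$, $\beta=\tfrac12$ (where $\rH^{1,3}\not\hookrightarrow\rL^\infty$ and your ``lose an $\eps$'' pairing does not actually return to $\rL^3$), is equally soft in the paper, whose proof of part (b) in fact only concludes for $\beta\in(\tfrac12,1)$.
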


\begin{proof}
Let us start with the case $q \in (1,3)$, which is the critical one.
H\"older's inequality first yields that
\begin{equation*}
    \| G(w,w) \|_{\rX_0} \le \| ((v-\ell) \cdot \nabla) v \|_{\rL^q(\cF_0)} \le C \bigl(\| v \|_{\rL^{qr'}(\cF_0)} \cdot \| v \|_{\rH^{1,qr}(\cF_0)} + |\ell| \cdot \| v \|_{\rW^{1,q}(\cF_0)}\bigr),
\end{equation*}
where $r$, $r' \in (1,\infty)$ are such that $\nicefrac{1}{r} + \nicefrac{1}{r'} = 1$.
With the choice $\nicefrac{3}{qr} = \nicefrac{1}{2}(1+\nicefrac{3}{q})$, for which we require that $q < 3$, we can ensure that the Sobolev indices of $\rL^{qr'}(\cF_0)$ and $\rH^{1,qr}(\cF_0)$ coincide.
Sobolev embeddings, see, for example, \cite[Theorem~4.6.1]{Tri:78}, yield that
\begin{equation*}
    \rH^{2\beta,q}(\cF_0) \hookrightarrow \rL^{qr'}(\cF_0) \cap \rH^{1,qr}(\cF_0) \tif \beta \ge \frac{1}{4}\left(1+\frac{3}{q}\right).
\end{equation*}
Let us observe that the preceding condition on $\beta$ together with $q \in (1,3)$ already implies that $\beta \in (\nicefrac{1}{2},1)$.
It then follows from Sobolev embeddings that $\rH^{2 \beta,q}(\cF_0) \hookrightarrow \rW^{1,q}(\cF_0)$.
Putting together these arguments, and invoking the above shape of the space $\rX_\beta$, we find that
\begin{equation*}
     \| G(w,w) \|_{\rX_0} \le C \cdot \| w \|_{\rX_\beta}^2, \tfor \beta \ge \frac{1}{4}\left(1+\frac{3}{q}\right) \tand q < 3.
\end{equation*}
This shows the assertion of~(a).

For~(b), we first observe that for all $r \in (1,\infty)$, we obtain that $1-\frac{3}{qr} \ge -\frac{3}{q r'}$, since $r (3 + q) > 6$ in this case.
Hence, the embedding $\rH^{2 \beta,q}(\cF_0) \hookrightarrow \rH^{1,qr}(\cF_0)$ is always the more restrictive one in this situation.
Thus, this time, we estimate
\begin{equation*}
    \| G(w,w) \|_{\rX_0} \le \| ((v-\ell) \cdot \nabla) v \|_{\rL^q(\cF_0)} \le C \bigl(\| v \|_{\rL^{\infty}(\cF_0)} \cdot \| v \|_{\rW^{1,q}(\cF_0)} + |\ell| \cdot \| v \|_{\rW^{1,q}(\cF_0)}\bigr).
\end{equation*}
It readily follows that $\rH^{2 \beta,q}(\cF_0) \hookrightarrow \rW^{1,q}(\cF_0)$ provided $\beta \ge \frac{1}{2}$, so by \autoref{lem:interpol spaces}, we get
\begin{equation*}
     \| G(w,w) \|_{\rX_0} \le C \cdot \| w \|_{\rX_\beta}^2, \tfor \beta \in \Bigl(\frac{1}{2},1\Bigr) \tand q \ge 3. \qedhere
\end{equation*}
\end{proof}

We are now in the position to prove the local strong well-posedness result \autoref{thm:loc strong wp stoch fsi}.

\begin{proof}[Proof of \autoref{thm:loc strong wp stoch fsi}]
For an application of \autoref{prop:loc ex result bilin SPDE}, it remains to verify the validity of \autoref{ass:main ass bilin setting} in the present setting.
Note that we have already discussed that $\rX_0$ is a UMD space with type~$2$ provided $q \ge 2$, and that $\lambda - \Afs \in \Hinfty(\rX_0)$, see \autoref{thm:bdd H00-calculus of fluid-structure op incompr Newtonian}.
In addition, we have already established that $(-\Afs,B) \in \cSMR_{p,\kappa}^\bullet$ in \autoref{lem:stoch max reg of lin part}.
As a consequence, it remains to show that there are $p \in [2,\infty)$, $\kappa \in [0,\nicefrac{p}{2}-1) \cup \{0\}$ and $\beta \in (1-\frac{1+\kappa}{p},1)$ such that $\frac{1+\kappa}{p} \le 2(1-\beta)$, and the map $F(w) = G(w,w)$ as introduced in \eqref{eq:terms on RHS} is bilinear and bounded from $\rX_\beta \times \rX_\beta$ to $\rX_0$.

In the following, we distinguish between the two cases and start with the critical one.
Hence, we first consider $q \in [2,3)$, $p \in (2,\infty)$ and $\beta \ge \frac{1}{4}(1+\frac{3}{q})$ as in \autoref{lem:nonlin est}.
Note that $\beta < 1$ is directly implied by~$q \ge 2 > 1$.
On the other hand, the condition $\beta > 1 - \frac{1+\kappa}{p}$ is equivalent with $\kappa > \frac{3p}{4}(1-\frac{1}{q}) - 1$, so by the requirement that $\kappa < \frac{p}{2} - 1$, we need that $\frac{3p}{4}(1-\frac{1}{q}) - 1 < \frac{p}{2} - 1$ in order to find such $\kappa$.
This is in turn equivalent with $q < 3$.
Finally, let us check the inequality $\frac{1+\kappa}{p} \le 2(1 - \beta) \le \frac{3}{2}(1-\frac{1}{q})$ by the choice of $\beta$.
This condition can be shown to be equivalent with $\kappa \le \frac{3p}{2}(1-\frac{1}{q}) - 1$.
Since $\kappa \ge 0$, we need that~$\frac{3p}{2}(1-\frac{1}{q}) - 1 \ge 0$, which is satisfied provided $\frac{2}{p} + \frac{3}{q} \le 3$.
The latter condition is already fulfilled for the present case of $p \ge 2$ and $q \ge 2$.
In summary, we have verified that in the present situation of~$q \in [2,3)$, there are $p \in [2,\infty)$, $\kappa \in [0,\frac{p}{2}-1) \cup \{0\}$ and $\beta \in (1-\frac{1+\kappa}{p},1)$ with $\frac{1+\kappa}{p} \le 2(1-\beta)$.

We now repeat the procedure for $q \ge 3$.
By \autoref{lem:nonlin est}(b), we then consider $\beta \in [\frac{1}{2},1)$, so $\beta < 1$ is already satisfied.
Moreover, we need that $\beta > 1 - \frac{1+\kappa}{p}$, which can be guaranteed for~$\frac{1+\kappa}{p} > 1 - \beta$, or $\kappa > p(1-\beta) - 1$.
By the requirement that $\kappa < \frac{p}{2} - 1$, this means that $p(1-\beta) - 1 < \frac{p}{2} - 1$, or, equivalently, $\beta > \frac{1}{2}$.
It remains to check that $\frac{1+\kappa}{p} \le 2(1-\beta)$, so $\kappa \le 2 p(1-\beta) - 1$.
Therefore, by $\kappa \ge 0$, we need that $2 p (1-\beta) \ge 0$, but this is already implied by $\beta \in (\frac{1}{2},1)$ and $p \ge 2$.

In total, the assertion of \autoref{thm:loc strong wp stoch fsi} follows from an application of \autoref{prop:loc ex result bilin SPDE}.
\end{proof}

We remark that the previous proof especially covers the local strong well-posedness result \autoref{thm:loc strong wp stoch fsi Hilbert space} in the Hilbert space case.
Thus, we proceed with the proof of the blow-up criterion \autoref{thm:blow-up crit stoch fsi}.

\begin{proof}[Proof of \autoref{thm:blow-up crit stoch fsi}]
From the above proof of \autoref{thm:loc strong wp stoch fsi}, it follows that $\beta = \frac{1}{4}(1+\frac{3}{q})$ and the associated choice $\frac{1+\kappa}{p} = 2(1-\beta) = \frac{3}{2} - \frac{3}{2q}$ corresponds to the critical case.
Note that this is only possible if $q \in [2,3)$, and we obtain $\rX_{1-\frac{1+\kappa}{p},p} = \rX_{\frac{3}{2q} - \frac{1}{2},p}$ and $\rX_{1-\frac{\kappa}{p}} = \rX_{\frac{1}{p}+\frac{3}{2q}-\frac{1}{2}}$.
By \autoref{prop:blow-up crit bilin SPDE}(a), the first part of the assertion is implied.
As a by-product of the proof of \autoref{thm:loc strong wp stoch fsi}, we also find that the case~$q \ge 3$ is always subcritical, so \autoref{prop:blow-up crit bilin SPDE}(b) yields the assertion of~(b).
\end{proof}

Finally, \autoref{thm:blow-up crit stoch fsi Hilbert space} is a consequence of \autoref{prop:blow-up crit bilin SPDE}(b) upon noting that the situation of $p = q = 2$ and $\kappa = 0$ is subcritical.
Indeed, the proof of \autoref{thm:loc strong wp stoch fsi} reveals that $\beta = \frac{5}{8}$ is the critical value, i.e., $2 (1 - \beta) = \frac{3}{4}$.
However, we have $\frac{1+\kappa}{p} = \frac{1}{2} <  \frac{3}{4} = 2 (1 - \beta)$, demonstrating the subcriticality.

\section{Concluding remarks and further discussion}\label{sec:concl rems & further discussion}

This section is devoted to the exposition of possible extensions of the results obtained in the previous sections, and we provide an overview of potential future works.

First, let us stress that further interesting functional analytic properties such as the $\cR$-boundedness of the $\Hinfty$-calculus of $-\Afs$ or the boundedness of the imaginary powers of $-\Afs$ could be deduced from \autoref{thm:bdd H00-calculus of fluid-structure op incompr Newtonian}.
However, we do not elaborate more on that and refer instead to \cite{DHP:03} or \cite[Section~4.5]{PS:16} for more details on these properties as well as their relation to the concepts studied here.

For completeness, let us also briefly elaborate on the case of a bounded fluid domain $\cF_0 \subset \R^3$.
In this case, we assume homogeneous Dirichlet boundary conditions for the fluid velocity on the outer fluid boundary.
The according lifting procedure is similar to \autoref{sec:bdd Hoo-calculus fluid-structure operator} and has been described in detail in \cite[Section~3]{MT:18}.
There are two differences with \autoref{sec:bdd Hoo-calculus fluid-structure operator}:
On the one hand, a refined spectral analysis reveals that the shift, which is indispensable in the exterior domain case, can be omitted. 
On the other hand, due to the boundedness of the underlying domain, the study of the Neumann problem for the pressure is more straightforward.
In fact, it is possible to follow \cite[Section~9]{FMM:98} and to employ \cite[Theorem~9.2]{FMM:98} to establish an analogue of \autoref{lem:lower reg Neumann problem}, but without restriction on $q \in (1,\infty)$.

Mimicking the procedure from \autoref{sec:bdd Hoo-calculus fluid-structure operator}, we introduce the analogue of the fluid-structure operator for~$\cF_0$ bounded, still denoted by $\cA_\fs$ by a slight abuse of notation, and we get a similar result as \autoref{thm:bdd H00-calculus of fluid-structure op incompr Newtonian}.
The ground space in this setting is as in \eqref{eq:ground space fluid-structure op}, so
\begin{equation*}
    \rX_0 = \{(u,\ell,\omega) \in \rL^q(\cF_0)^3 \times \C^3 \times \C^3 : u - D_{\lambda_0,u}(\ell,\omega) \in \rL_\sigma^q(\cF_0)\}.
\end{equation*}
Additionally invoking that $\C_+ \subset \rho(\cA_\fs)$ in this case, see \cite[Theorem~4.2]{MT:18}, we find that we may choose $\lambda = 0$ here.
This paves the way for the following result.
Note that the assertions in (a)--(c) can be derived from the boundedness of the $\Hinfty$-calculus as sketched before \autoref{cor:cons of H00-calc of fs op incompr Newtonian}.

\begin{thm}\label{thm:props fluid-structure op bdd dom}
Let $q \in (1,\infty)$.
Then for the fluid-structure operator $\cA_\fs$ on the bounded domain $\cF_0$, it holds that $-\cA_\fs \in \Hinfty(\rX_0)$ with $\phi_{-\cA_\fs}^\infty = 0$, and
\begin{enumerate}[(a)]
    \item the operator $-\cA_\fs$ has maximal $\rL^p$-regularity on $\rX_0$, and $\cA_\fs$ generates a bounded analytic and exponentially stable semigroup $(\mre^{t \cA_\fs})_{t \ge 0}$ of angle $\nicefrac{\pi}{2}$ on $\rX_0$,
    \item for $\alpha \in (0,1)$ and $\rX_\alpha$ related to $-\cA_\fs$ as specified in \autoref{lem:rel of Hinfty with other concepts}, we have $\rX_\alpha \cong [\rX_0,\rX_{\cA_\fs}]_\alpha$, and
    \item for $p$, $q \in [2,\infty)$ and $\kappa \in [0,\nicefrac{p}{2}-1)$, we get $(-\Afs,0) \in \cSMR_{p,\kappa}^\bullet$.
\end{enumerate}
\end{thm}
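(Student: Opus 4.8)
The plan is to transfer the decoupling-and-perturbation argument behind \autoref{thm:bdd H00-calculus of fluid-structure op incompr Newtonian} to the bounded domain $\cF_0$ (with homogeneous Dirichlet condition on the outer boundary), and then to read off (a)--(c) exactly as in \autoref{cor:cons of H00-calc of fs op incompr Newtonian}. First I would set up, following \cite[Section~3]{MT:18}, the lifting operator $D_{\lambda_0}$ associated with the stationary interface problem, the shift-adjusted Neumann operator $N_R$, and the operators $K$, $C_1$, $C_2$; the Helmholtz decomposition, the characterization of $\rL_\sigma^q(\cF_0)$ and the bounded $\Hinfty$-calculus of the Dirichlet--Stokes operator $A_0$ with $\phi_{A_0}^\infty=0$ are all available from \autoref{lem:props Helmholtz and Stokes}, since the boundary is compact and of class $\rC^3$. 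The operator $\cA_\fs$ and its ground space $\rX_0$ are then defined verbatim as in \eqref{eq:fluid-structure op incompr Newtonian} and \eqref{eq:ground space fluid-structure op}, and the bounded-domain analogue of \autoref{lem:ground space isomorphic to closed subspace of Lq} shows that $\rX_0$ is isomorphic to a closed subspace of $\rL^q$ on a $\sigma$-finite measure space, hence a UMD space of type $2$ for $q\ge 2$.

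Next I would run the decoupling exactly as in the proof of \autoref{thm:bdd H00-calculus of fluid-structure op incompr Newtonian}: with $S$, $S^{-1}$ from \eqref{eq:decoupling matrices} the operator $\tcA_\fs=S\cA_\fs S^{-1}$ acquires diagonal domain and splits as $\tcA_\fs=\tcA_{\fs,\rI}+\cB_\fs$. Diagonal dominance of $\tcA_{\fs,\rI}$ is verified as before ($K^{-1}C_2\in\cL(\C^3\times\C^3)$, $\lambda_0\cP D_{\lambda_0,u}$ bounded into $\rL_\sigma^q(\cF_0)$, and $K^{-1}C_1$ relatively $A_0$-bounded), so \autoref{prop:op theoret props of block op matrices} applies. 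The one genuinely new point is the low-regularity estimate for the weak Neumann problem needed to treat $\cB_\fs$ via \autoref{prop:pert of Hinfty-calculus}. Here the bounded domain is actually more convenient: instead of the spatially weighted estimates of \cite{LM:06} one follows \cite[Section~9]{FMM:98}, in particular \cite[Theorem~9.2]{FMM:98}, to obtain for every $q\in(1,\infty)$ some $t\in(1,2-\nicefrac{1}{q})$ with $\|N_R h\|_{\rW^{2-t,q}(\cF_0)}\le C\|h\|_{\rH^{1-t,q}(\cF_0)}$ and $N_R h\in\rL^q(\del\cS_0)^3$, i.e.\ the analogue of \autoref{lem:lower reg Neumann problem} \emph{without} the restriction $q>\nicefrac{3}{2}$. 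With this, the estimates \eqref{eq:pert left top entry}--\eqref{eq:pert right top entry} carry over unchanged, and \autoref{prop:pert of Hinfty-calculus} together with \autoref{lem:preservation of results under sim trafe} yields $-\cA_\fs+\lambda\in\Hinfty(\rX_0)$ with $\phi_{-\cA_\fs+\lambda}^\infty=0$ for some $\lambda\ge 0$.

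It then remains to remove the shift. I would invoke \cite[Theorem~4.2]{MT:18}, according to which $\C_+\subset\rho(\cA_\fs)$ and the fluid-structure semigroup on the bounded domain is exponentially stable; in particular $0\in\rho(\cA_\fs)$ and $-\cA_\fs$ is an invertible sectorial operator. Since $-\cA_\fs$ is invertible and $\lambda-\cA_\fs$ already carries a bounded $\Hinfty$-calculus of angle $0$, the standard shift argument for the $\Hinfty$-calculus of invertible sectorial operators (see e.g.\ \cite[Chapter~3]{PS:16}, cf.\ \cite[Proposition~2.11]{DHP:03}) gives $-\cA_\fs\in\Hinfty(\rX_0)$ with $\phi_{-\cA_\fs}^\infty=0$, i.e.\ one may take $\lambda=0$. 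Finally (a)--(c) follow as sketched before \autoref{cor:cons of H00-calc of fs op incompr Newtonian}: (a) from \autoref{lem:rel of Hinfty with other concepts}(a) combined with the exponential stability from \cite{MT:18}, (b) from \autoref{lem:rel of Hinfty with other concepts}(b), and (c) from \autoref{prop:stoch max reg via Hinfty subset of Lq} using that $\rX_0$ is a UMD space of type $2$ embedded in $\rL^q$.

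The main obstacle I expect is not conceptual but a matter of carefully matching exponents: one has to check that \cite[Theorem~9.2]{FMM:98} delivers the low-regularity Neumann estimate in precisely the fractional scale ($\rW^{2-t,q}$ from $\rH^{1-t,q}$, with $t\in(1,2-\nicefrac{1}{q})$) that the perturbation bounds for $\cB_\fs$ require, now for \emph{all} $q\in(1,\infty)$; and, secondarily, to make the shift-removal precise, i.e.\ that the invertibility of $\cA_\fs$ from \cite{MT:18} indeed forces $\lambda=0$ with the $\Hinfty$-angle preserved.
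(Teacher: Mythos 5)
Your proposal is correct and follows essentially the same route as the paper: the paper likewise mimics the decoupling-and-perturbation argument of \autoref{thm:bdd H00-calculus of fluid-structure op incompr Newtonian} with the lifting procedure of \cite[Section~3]{MT:18}, replaces \autoref{lem:lower reg Neumann problem} by the analogue obtained from \cite[Theorem~9.2]{FMM:98} (valid for all $q\in(1,\infty)$), removes the shift via $\C_+\subset\rho(\cA_\fs)$ from \cite[Theorem~4.2]{MT:18}, and deduces (a)--(c) as sketched before \autoref{cor:cons of H00-calc of fs op incompr Newtonian}. Your write-up is in fact somewhat more explicit than the paper's sketch, in particular on how the invertibility of $\cA_\fs$ is used to take $\lambda=0$.
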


A similar result is also available in the 2D case, and we stress again that we allow for a rigid body of arbitrary shape with $\rC^3$-boundary.
To the best of our knowledge, \autoref{thm:bdd H00-calculus of fluid-structure op incompr Newtonian} and \autoref{thm:props fluid-structure op bdd dom} are the first results on the bounded $\Hinfty$-calculus of fluid-structure operators.
In the case of a fluid-filled rigid body, Mazzone et al.\ \cite{MPS:19a, MPS:19b} established the boundedness of the $\Hinfty$-calculus of the underlying linear operator.
However, the situation of a rigid body with a fluid-filled cavity is significantly different from the present fluid-rigid body interaction problem, and the domain of the associated operator is especially {\em not} non-diagonal.
A natural question is if similar results as in \autoref{sec:main results} can be obtained in the case of a rigid body of arbitrary shape, or of a bounded domain.
A difficulty in the former case is that the transformation to the fixed domain is more complicated.
In fact, common transformations to a fixed domain in this case either lead to the introduction of terms that violate the parabolicity of the problem, or terms that are non-local in time enter the equation.
For a bounded domain, another significant difficulty is the possibility of a collision of the rigid body with the outer fluid boundary.
It seems that the framework provided in \autoref{ssec:stoch max reg and semilin SPDEs} is insufficient for the analysis, and we leave the investigation to future study.

\medskip 

{\bf Acknowledgments.}
This research is supported by the Basque Government through the BERC 2022-2025 program and by the Spanish State Research Agency through BCAM Severo Ochoa excellence accreditation CEX2021-01142-S funded by MICIU/AEI/10.13039/501100011033 and through Grant PID2023-146764NB-I00 funded by MICIU/AEI/10.13039/501100011033 and cofunded by the European Union. 
Felix Brandt would like to thank the German National Academy of Sciences Leopoldina for support through the Leopoldina Fellowship Program with grant number~LPDS 2024-07. 
Arnab Roy acknowledges the support by  Grant RYC2022-036183-I funded by MICIU/AEI/10.13039/501100011033 and by ESF+.

The authors would also like to thank Antonio Agresti for helpful comments and fruitful discussions.

\bibliography{linfsi}

@article {MR:04,
    AUTHOR = {Mikulevicius, R. and Rozovskii, B. L.},
     TITLE = {Stochastic {N}avier-{S}tokes equations for turbulent flows},
   JOURNAL = {SIAM J. Math. Anal.},
  FJOURNAL = {SIAM Journal on Mathematical Analysis},
    VOLUME = {35},
      YEAR = {2004},
    NUMBER = {5},
     PAGES = {1250--1310},
      ISSN = {0036-1410,1095-7154},
   MRCLASS = {60H15 (35Q30 35R60 76D05 76F02 76M35)},
  MRNUMBER = {2050201},
MRREVIEWER = {Anna\ Karczewska},
       DOI = {10.1137/S0036141002409167},
       URL = {https://doi.org/10.1137/S0036141002409167},
}

@article {DP:26,
    AUTHOR = {Debussche, Arnaud and Pappalettera, Umberto},
     TITLE = {Second order perturbation theory of two-scale systems in fluid
              dynamics},
   JOURNAL = {J. Eur. Math. Soc. (JEMS)},
  FJOURNAL = {Journal of the European Mathematical Society (JEMS)},
    VOLUME = {28},
      YEAR = {2026},
    NUMBER = {4},
     PAGES = {1533--1595},
      ISSN = {1435-9855,1435-9863},
   MRCLASS = {76F02 (60H15 60H30)},
  MRNUMBER = {5032984},
       DOI = {10.4171/jems/1501},
       URL = {https://doi.org/10.4171/jems/1501},
}

@article{sprinkle2017large,
  title={Large scale Brownian dynamics of confined suspensions of rigid particles},
  author={Sprinkle, Brennan and Balboa Usabiaga, Florencio and Patankar, Neelesh A and Donev, Aleksandar},
  journal={The Journal of chemical physics},
  volume={147},
  number={24},
  year={2017},
  publisher={AIP Publishing}
}

@article{atzberger2011stochastic,
  title={Stochastic {E}ulerian {L}agrangian methods for fluid--structure interactions with thermal fluctuations},
  author={Atzberger, Paul J},
  journal={Journal of Computational Physics},
  volume={230},
  number={8},
  pages={2821--2837},
  year={2011},
  publisher={Elsevier}
}

@article {GGH:13,
    AUTHOR = {Geissert, Matthias and G\"{o}tze, Karoline and Hieber, Matthias},
     TITLE = {{$L^p$}-theory for strong solutions to fluid-rigid body
              interaction in {N}ewtonian and generalized {N}ewtonian fluids},
   JOURNAL = {Trans. Amer. Math. Soc.},
  FJOURNAL = {Transactions of the American Mathematical Society},
    VOLUME = {365},
      YEAR = {2013},
    NUMBER = {3},
     PAGES = {1393--1439},
      ISSN = {0002-9947},
   MRCLASS = {35Q35 (35A01 35A02 35D35 76A05)},
  MRNUMBER = {3003269},
MRREVIEWER = {Michael Wolff},
       DOI = {10.1090/S0002-9947-2012-05652-2},
       URL = {https://doi.org/10.1090/S0002-9947-2012-05652-2},
}

@book {PS:16,
    AUTHOR = {Pr\"{u}ss, Jan and Simonett, Gieri},
     TITLE = {Moving interfaces and quasilinear parabolic evolution
              equations},
    SERIES = {Monographs in Mathematics},
    VOLUME = {105},
 PUBLISHER = {Birkh\"{a}user/Springer, [Cham]},
      YEAR = {2016},
     PAGES = {xix+609},
      ISBN = {978-3-319-27697-7; 978-3-319-27698-4},
   MRCLASS = {35-02 (35B30 35K93 35R35 47F05 58Jxx 76A15 80A22)},
  MRNUMBER = {3524106},
MRREVIEWER = {Glen E. Wheeler},
       DOI = {10.1007/978-3-319-27698-4},
       URL = {https://doi.org/10.1007/978-3-319-27698-4},
}

@book {Ama:95,
    AUTHOR = {Amann, Herbert},
     TITLE = {Linear and quasilinear parabolic problems. {V}ol. {I}},
    SERIES = {Monographs in Mathematics},
    VOLUME = {89},
      NOTE = {Abstract linear theory},
 PUBLISHER = {Birkh\"{a}user Boston, Inc., Boston, MA},
      YEAR = {1995},
     PAGES = {xxxvi+335},
      ISBN = {3-7643-5114-4},
   MRCLASS = {34Gxx (35Kxx 46M35 46N20 47D06 47N20)},
  MRNUMBER = {1345385},
MRREVIEWER = {Paolo Acquistapace},
       DOI = {10.1007/978-3-0348-9221-6},
       URL = {https://doi.org/10.1007/978-3-0348-9221-6},
}

@book {Tri:78,
    AUTHOR = {Triebel, Hans},
     TITLE = {Interpolation theory, function spaces, differential operators},
    SERIES = {North-Holland Mathematical Library},
    VOLUME = {18},
 PUBLISHER = {North-Holland Publishing Co., Amsterdam-New York},
      YEAR = {1978},
     PAGES = {528},
      ISBN = {0-7204-0710-9},
   MRCLASS = {46E35 (35Jxx 46M35)},
  MRNUMBER = {503903},
MRREVIEWER = {Robert D. Brown},
}

@article {DHP:03,
    AUTHOR = {Denk, Robert and Hieber, Matthias and Pr\"{u}ss, Jan},
     TITLE = {{$\mathcal{R}$}-boundedness, {F}ourier multipliers and problems of
              elliptic and parabolic type},
   JOURNAL = {Mem. Amer. Math. Soc.},
  FJOURNAL = {Memoirs of the American Mathematical Society},
    VOLUME = {166},
      YEAR = {2003},
    NUMBER = {788},
     PAGES = {viii+114},
      ISSN = {0065-9266},
   MRCLASS = {35-02 (35J30 35K25 42B15)},
  MRNUMBER = {2006641},
MRREVIEWER = {Alain Brillard},
       DOI = {10.1090/memo/0788},
       URL = {https://doi.org/10.1090/memo/0788},
}

@article {BBH:23,
    AUTHOR = {Binz, Tim and Brandt, Felix and Hieber, Matthias},
     TITLE = {Rigorous analysis of the interaction problem of sea ice with a rigid body},
   JOURNAL = {Math. Ann.},
  FJOURNAL = {Mathematische Annalen},
    VOLUME = {389},
      YEAR = {2024},
    NUMBER = {1},
     PAGES = {591--625},
MRREVIEWER = {Roland Schnaubelt},
       DOI = {10.1007/s00208-023-02629-3},
       URL = {https://doi.org/10.1007/s00208-023-02629-3},
}

@incollection {Galdi:02,
    AUTHOR = {Galdi, Giovanni P.},
     TITLE = {On the motion of a rigid body in a viscous liquid: a
              mathematical analysis with applications},
 BOOKTITLE = {Handbook of mathematical fluid dynamics, {V}ol. {I}},
     PAGES = {653--791},
 PUBLISHER = {North-Holland, Amsterdam},
      YEAR = {2002},
   MRCLASS = {76D05 (35Q35 76A05 76T20 76Z10)},
  MRNUMBER = {1942470},
MRREVIEWER = {Ana L. Silvestre},
}

@article {DE:99,
    AUTHOR = {Desjardins, B. and Esteban, M. J.},
     TITLE = {Existence of weak solutions for the motion of rigid bodies in
              a viscous fluid},
   JOURNAL = {Arch. Ration. Mech. Anal.},
  FJOURNAL = {Archive for Rational Mechanics and Analysis},
    VOLUME = {146},
      YEAR = {1999},
    NUMBER = {1},
     PAGES = {59--71},
      ISSN = {0003-9527},
   MRCLASS = {76D03 (35D05 35Q35 76D05)},
  MRNUMBER = {1682663},
MRREVIEWER = {Denis Serre},
       DOI = {10.1007/s002050050136},
       URL = {https://doi.org/10.1007/s002050050136},
}

@article {CSMT:00,
    AUTHOR = {Conca, Carlos and San Mart\'{\i}n H., Jorge and Tucsnak, Marius},
     TITLE = {Existence of solutions for the equations modelling the motion
              of a rigid body in a viscous fluid},
   JOURNAL = {Comm. Partial Differential Equations},
  FJOURNAL = {Communications in Partial Differential Equations},
    VOLUME = {25},
      YEAR = {2000},
    NUMBER = {5-6},
     PAGES = {1019--1042},
      ISSN = {0360-5302},
   MRCLASS = {35Q35 (76D03)},
  MRNUMBER = {1759801},
MRREVIEWER = {Ewa Zadrzy\'{n}ska},
       DOI = {10.1080/03605300008821540},
       URL = {https://doi.org/10.1080/03605300008821540},
}

@article {EMT:23,
    AUTHOR = {Ervedoza, Sylvain and Maity, Debayan and Tucsnak, Marius},
     TITLE = {Large time behaviour for the motion of a solid in a viscous
              incompressible fluid},
   JOURNAL = {Math. Ann.},
  FJOURNAL = {Mathematische Annalen},
    VOLUME = {385},
      YEAR = {2023},
    NUMBER = {1-2},
     PAGES = {631--691},
      ISSN = {0025-5831},
   MRCLASS = {35Q35 (35B40 76D03 76D05)},
  MRNUMBER = {4542727},
MRREVIEWER = {Lili Fan},
       DOI = {10.1007/s00208-021-02351-y},
       URL = {https://doi.org/10.1007/s00208-021-02351-y},
}

@incollection {GS:02,
    AUTHOR = {Galdi, Giovanni P. and Silvestre, Ana L.},
     TITLE = {Strong solutions to the problem of motion of a rigid body in a
              {N}avier-{S}tokes liquid under the action of prescribed forces
              and torques},
 BOOKTITLE = {Nonlinear problems in mathematical physics and related topics,
              {I}},
    SERIES = {Int. Math. Ser. (N. Y.)},
    VOLUME = {1},
     PAGES = {121--144},
 PUBLISHER = {Kluwer/Plenum, New York},
      YEAR = {2002},
   MRCLASS = {76D05 (35D10 35Q53 76D03)},
  MRNUMBER = {1970608},
MRREVIEWER = {Beno\^{i}t P. Desjardins},
       DOI = {10.1007/978-1-4615-0777-2\_8},
       URL = {https://doi.org/10.1007/978-1-4615-0777-2_8},
}

@article {Takahashi:03,
    AUTHOR = {Takahashi, Tak\'{e}o},
     TITLE = {Analysis of strong solutions for the equations modeling the
              motion of a rigid-fluid system in a bounded domain},
   JOURNAL = {Adv. Differential Equations},
  FJOURNAL = {Advances in Differential Equations},
    VOLUME = {8},
      YEAR = {2003},
    NUMBER = {12},
     PAGES = {1499--1532},
      ISSN = {1079-9389},
   MRCLASS = {76D03 (35Q30 76D05)},
  MRNUMBER = {2029294},
MRREVIEWER = {Beno\^{i}t P. Desjardins},
}

@incollection {MT:18,
    AUTHOR = {Maity, Debayan and Tucsnak, Marius},
     TITLE = {{$L^p$}-{$L^q$} maximal regularity for some operators
              associated with linearized incompressible fluid-rigid body
              problems},
 BOOKTITLE = {Mathematical analysis in fluid mechanics---selected recent
              results},
    SERIES = {Contemp. Math.},
    VOLUME = {710},
     PAGES = {175--201},
 PUBLISHER = {Amer. Math. Soc., [Providence], RI},
      YEAR = {[2018] \copyright 2018},
   MRCLASS = {76D03 (35B65 35Q30 74F10 76D05)},
  MRNUMBER = {3818674},
MRREVIEWER = {Aziz Belmiloudi},
       DOI = {10.1090/conm/710/14370},
       URL = {https://doi.org/10.1090/conm/710/14370},
}

@book {KKLTTW:18,
    AUTHOR = {Kaltenbacher, Barbara and Kukavica, Igor and Lasiecka, Irena
              and Triggiani, Roberto and Tuffaha, Amjad and Webster, Justin
              T.},
     TITLE = {Mathematical theory of evolutionary fluid-flow structure
              interactions},
    SERIES = {Oberwolfach Seminars},
    VOLUME = {48},
      NOTE = {Lecture notes from Oberwolfach seminars, November 20--26,
              2016},
 PUBLISHER = {Birkh\"{a}user/Springer, Cham},
      YEAR = {2018},
     PAGES = {xiii+307},
      ISBN = {978-3-319-92782-4; 978-3-319-92783-1},
   MRCLASS = {76-06 (35L75 35Q30 35Q74 74F10)},
  MRNUMBER = {3822723},
       DOI = {10.1007/978-3-319-92783-1},
       URL = {https://doi.org/10.1007/978-3-319-92783-1},
}

@article {AH:23,
    AUTHOR = {Agresti, Antonio and Hussein, Amru},
     TITLE = {Maximal {$L^p$}-regularity and {$H^\infty$}-calculus for block
              operator matrices and applications},
   JOURNAL = {J. Funct. Anal.},
  FJOURNAL = {Journal of Functional Analysis},
    VOLUME = {285},
      YEAR = {2023},
    NUMBER = {11},
      ISSN = {0022-1236},
   MRCLASS = {47A55 (35M13 35Q35 47A60 47D06)},
  MRNUMBER = {4642564},
MRREVIEWER = {Julio Delgado},
       DOI = {10.1016/j.jfa.2023.110146},
       URL = {https://doi.org/10.1016/j.jfa.2023.110146},
      NOTE = {Paper No. 110146},
}

@article {Ser:87,
    AUTHOR = {Serre, Denis},
     TITLE = {Chute libre d'un solide dans un fluide visqueux
              incompressible. {E}xistence},
   JOURNAL = {Japan J. Appl. Math.},
  FJOURNAL = {Japan Journal of Applied Mathematics},
    VOLUME = {4},
      YEAR = {1987},
    NUMBER = {1},
     PAGES = {99--110},
      ISSN = {0910-2043},
   MRCLASS = {76D99 (76D05)},
  MRNUMBER = {899206},
       DOI = {10.1007/BF03167757},
       URL = {https://doi.org/10.1007/BF03167757},
}

@article {Wei:01,
    AUTHOR = {Weis, Lutz},
     TITLE = {Operator-valued {F}ourier multiplier theorems and maximal
              {$L_p$}-regularity},
   JOURNAL = {Math. Ann.},
  FJOURNAL = {Mathematische Annalen},
    VOLUME = {319},
      YEAR = {2001},
    NUMBER = {4},
     PAGES = {735--758},
      ISSN = {0025-5831},
   MRCLASS = {42B15 (47D06)},
  MRNUMBER = {1825406},
MRREVIEWER = {Alfonso Montes-Rodr\'{\i}guez},
       DOI = {10.1007/PL00004457},
       URL = {https://doi.org/10.1007/PL00004457},
}

@article {TT:04,
    AUTHOR = {Takahashi, Tak\'{e}o and Tucsnak, Marius},
     TITLE = {Global strong solutions for the two-dimensional motion of an
              infinite cylinder in a viscous fluid},
   JOURNAL = {J. Math. Fluid Mech.},
  FJOURNAL = {Journal of Mathematical Fluid Mechanics},
    VOLUME = {6},
      YEAR = {2004},
    NUMBER = {1},
     PAGES = {53--77},
      ISSN = {1422-6928},
   MRCLASS = {76D03 (35Q30 76D05)},
  MRNUMBER = {2027754},
MRREVIEWER = {Beno\^{i}t P. Desjardins},
       DOI = {10.1007/s00021-003-0083-4},
       URL = {https://doi.org/10.1007/s00021-003-0083-4},
}

@article {WX:11,
    AUTHOR = {Wang, Yun and Xin, Zhouping},
     TITLE = {Analyticity of the semigroup associated with the fluid-rigid
              body problem and local existence of strong solutions},
   JOURNAL = {J. Funct. Anal.},
  FJOURNAL = {Journal of Functional Analysis},
    VOLUME = {261},
      YEAR = {2011},
    NUMBER = {9},
     PAGES = {2587--2616},
      ISSN = {0022-1236},
   MRCLASS = {35Q35 (35D35 47H20 47N20 74F10 76D03)},
  MRNUMBER = {2826407},
MRREVIEWER = {Nader Masmoudi},
       DOI = {10.1016/j.jfa.2011.07.001},
       URL = {https://doi.org/10.1016/j.jfa.2011.07.001},
}

@article {NS:03,
    AUTHOR = {Noll, Andr\'{e} and Saal, J\"{u}rgen},
     TITLE = {{$H^\infty$}-calculus for the {S}tokes operator on
              {$L_q$}-spaces},
   JOURNAL = {Math. Z.},
  FJOURNAL = {Mathematische Zeitschrift},
    VOLUME = {244},
      YEAR = {2003},
    NUMBER = {3},
     PAGES = {651--688},
      ISSN = {0025-5874},
   MRCLASS = {35Q30 (47A60 47F05 76D07)},
  MRNUMBER = {1992030},
MRREVIEWER = {Joel David Avrin},
       DOI = {10.1007/s00209-003-0518-y},
       URL = {https://doi.org/10.1007/s00209-003-0518-y},
}

@article {PSW:18,
    AUTHOR = {Pr\"{u}ss, Jan and Simonett, Gieri and Wilke, Mathias},
     TITLE = {Critical spaces for quasilinear parabolic evolution equations
              and applications},
   JOURNAL = {J. Differential Equations},
  FJOURNAL = {Journal of Differential Equations},
    VOLUME = {264},
      YEAR = {2018},
    NUMBER = {3},
     PAGES = {2028--2074},
      ISSN = {0022-0396},
   MRCLASS = {35K59 (35B40 35K58 35K90 35Q30 35Q35 76D05)},
  MRNUMBER = {3721420},
MRREVIEWER = {Stefanie Sonner},
       DOI = {10.1016/j.jde.2017.10.010},
       URL = {https://doi.org/10.1016/j.jde.2017.10.010},
}

@article {PW:17,
    AUTHOR = {Pr\"{u}ss, Jan and Wilke, Mathias},
     TITLE = {Addendum to the paper ``{O}n quasilinear parabolic evolution
              equations in weighted {$L_p$}-spaces {II}'' [ {MR}3250797]},
   JOURNAL = {J. Evol. Equ.},
  FJOURNAL = {Journal of Evolution Equations},
    VOLUME = {17},
      YEAR = {2017},
    NUMBER = {4},
     PAGES = {1381--1388},
      ISSN = {1424-3199},
   MRCLASS = {35K90 (35B30 35B65 35K55 35K57 35R35)},
  MRNUMBER = {3722064},
       DOI = {10.1007/s00028-017-0382-6},
       URL = {https://doi.org/10.1007/s00028-017-0382-6},
}

@incollection {SS:92,
    AUTHOR = {Simader, Christian G. and Sohr, Hermann},
     TITLE = {A new approach to the {H}elmholtz decomposition and the
              {N}eumann problem in {$L^q$}-spaces for bounded and exterior
              domains},
 BOOKTITLE = {Mathematical problems relating to the {N}avier-{S}tokes
              equation},
    SERIES = {Ser. Adv. Math. Appl. Sci.},
    VOLUME = {11},
     PAGES = {1--35},
 PUBLISHER = {World Sci. Publ., River Edge, NJ},
      YEAR = {1992},
   MRCLASS = {35J05 (35Q30)},
  MRNUMBER = {1190728},
MRREVIEWER = {Volker Vogelsang},
       DOI = {10.1142/9789814503594\_0001},
       URL = {https://doi.org/10.1142/9789814503594_0001},
}

@article {FMM:98,
    AUTHOR = {Fabes, Eugene and Mendez, Osvaldo and Mitrea, Marius},
     TITLE = {Boundary layers on {S}obolev-{B}esov spaces and {P}oisson's
              equation for the {L}aplacian in {L}ipschitz domains},
   JOURNAL = {J. Funct. Anal.},
  FJOURNAL = {Journal of Functional Analysis},
    VOLUME = {159},
      YEAR = {1998},
    NUMBER = {2},
     PAGES = {323--368},
      ISSN = {0022-1236},
   MRCLASS = {35J25 (46E35 46N20)},
  MRNUMBER = {1658089},
MRREVIEWER = {H. Triebel},
       DOI = {10.1006/jfan.1998.3316},
       URL = {https://doi.org/10.1006/jfan.1998.3316},
}

@incollection {McIY:90,
    AUTHOR = {McIntosh, Alan and Yagi, Atsushi},
     TITLE = {Operators of type {$\omega$} without a bounded {$H_\infty$}
              functional calculus},
 BOOKTITLE = {Miniconference on {O}perators in {A}nalysis ({S}ydney, 1989)},
    SERIES = {Proc. Centre Math. Anal. Austral. Nat. Univ.},
    VOLUME = {24},
     PAGES = {159--172},
 PUBLISHER = {Austral. Nat. Univ., Canberra},
      YEAR = {1990},
   MRCLASS = {47A60 (47A10)},
  MRNUMBER = {1060121},
MRREVIEWER = {J\"{o}rg Eschmeier},
}

@article {PW:18,
    AUTHOR = {Pr\"{u}ss, Jan and Wilke, Mathias},
     TITLE = {On critical spaces for the {N}avier-{S}tokes equations},
   JOURNAL = {J. Math. Fluid Mech.},
  FJOURNAL = {Journal of Mathematical Fluid Mechanics},
    VOLUME = {20},
      YEAR = {2018},
    NUMBER = {2},
     PAGES = {733--755},
      ISSN = {1422-6928},
   MRCLASS = {76D05 (35Q30)},
  MRNUMBER = {3808592},
MRREVIEWER = {Maria Specovius-Neugebauer},
       DOI = {10.1007/s00021-017-0342-5},
       URL = {https://doi.org/10.1007/s00021-017-0342-5},
}

@article {MPS:19a,
    AUTHOR = {Mazzone, Giusy and Pr\"{u}ss, Jan and Simonett, Gieri},
     TITLE = {A maximal regularity approach to the study of motion of a
              rigid body with a fluid-filled cavity},
   JOURNAL = {J. Math. Fluid Mech.},
  FJOURNAL = {Journal of Mathematical Fluid Mechanics},
    VOLUME = {21},
      YEAR = {2019},
    NUMBER = {3},
      ISSN = {1422-6928},
   MRCLASS = {76D05 (35B65 35Q35)},
  MRNUMBER = {3990621},
MRREVIEWER = {Zhaohui Huo},
       DOI = {10.1007/s00021-019-0449-y},
       URL = {https://doi.org/10.1007/s00021-019-0449-y},
      NOTE = {Paper No. 44},
}

@article {MPS:19b,
    AUTHOR = {Mazzone, Giusy and Pr\"{u}ss, Jan and Simonett, Gieri},
     TITLE = {On the motion of a fluid-filled rigid body with {N}avier
              boundary conditions},
   JOURNAL = {SIAM J. Math. Anal.},
  FJOURNAL = {SIAM Journal on Mathematical Analysis},
    VOLUME = {51},
      YEAR = {2019},
    NUMBER = {3},
     PAGES = {1582--1606},
      ISSN = {0036-1410},
   MRCLASS = {76D05 (35B40 35K58 35Q30 35Q35)},
  MRNUMBER = {3945801},
MRREVIEWER = {David Swanson},
       DOI = {10.1137/18M1212264},
       URL = {https://doi.org/10.1137/18M1212264},
}

@article {AV:22a,
    AUTHOR = {Agresti, Antonio and Veraar, Mark},
     TITLE = {Nonlinear parabolic stochastic evolution equations in critical
              spaces part {I}. {S}tochastic maximal regularity and local
              existence},
   JOURNAL = {Nonlinearity},
  FJOURNAL = {Nonlinearity},
    VOLUME = {35},
      YEAR = {2022},
    NUMBER = {8},
     PAGES = {4100--4210},
      ISSN = {0951-7715},
   MRCLASS = {60H15 (35K59 35R60 42B37 47D06)},
  MRNUMBER = {4459102},
}

@article {vNVW:12a,
    AUTHOR = {van Neerven, Jan and Veraar, Mark and Weis, Lutz},
     TITLE = {Stochastic maximal {$L^p$}-regularity},
   JOURNAL = {Ann. Probab.},
  FJOURNAL = {The Annals of Probability},
    VOLUME = {40},
      YEAR = {2012},
    NUMBER = {2},
     PAGES = {788--812},
      ISSN = {0091-1798},
   MRCLASS = {60H15 (35B65 42B25 47A60 47D06)},
  MRNUMBER = {2952092},
MRREVIEWER = {Feng-Yu Wang},
       DOI = {10.1214/10-AOP626},
       URL = {https://doi.org/10.1214/10-AOP626},
}

@article {vNVW:12b,
    AUTHOR = {van Neerven, Jan and Veraar, Mark and Weis, Lutz},
     TITLE = {Maximal {$L^p$}-regularity for stochastic evolution equations},
   JOURNAL = {SIAM J. Math. Anal.},
  FJOURNAL = {SIAM Journal on Mathematical Analysis},
    VOLUME = {44},
      YEAR = {2012},
    NUMBER = {3},
     PAGES = {1372--1414},
      ISSN = {0036-1410},
   MRCLASS = {60H15 (35R60 46B09 47D06)},
  MRNUMBER = {2982717},
MRREVIEWER = {Elisa Al\`os},
       DOI = {10.1137/110832525},
       URL = {https://doi.org/10.1137/110832525},
}

@article {AV:22b,
    AUTHOR = {Agresti, Antonio and Veraar, Mark},
     TITLE = {Nonlinear parabolic stochastic evolution equations in critical
              spaces part {II}: {B}low-up criteria and instataneous
              regularization},
   JOURNAL = {J. Evol. Equ.},
  FJOURNAL = {Journal of Evolution Equations},
    VOLUME = {22},
      YEAR = {2022},
    NUMBER = {2},
      ISSN = {1424-3199},
   MRCLASS = {35R60 (35K59 60H15)},
  MRNUMBER = {4437443},
MRREVIEWER = {Le Chen},
       DOI = {10.1007/s00028-022-00786-7},
       URL = {https://doi.org/10.1007/s00028-022-00786-7},
      NOTE = {Paper No. 56},
}

@incollection {vNVW:15a,
    AUTHOR = {van Neerven, Jan and Veraar, Mark and Weis, Lutz},
     TITLE = {Stochastic integration in {B}anach spaces---a survey},
 BOOKTITLE = {Stochastic analysis: a series of lectures},
    SERIES = {Progr. Probab.},
    VOLUME = {68},
     PAGES = {297--332},
 PUBLISHER = {Birkh\"{a}user/Springer, Basel},
      YEAR = {2015},
   MRCLASS = {60H05 (46B09 60B11 60H07 60H15)},
  MRNUMBER = {3558130},
       DOI = {10.1007/978-3-0348-0909-2\_11},
       URL = {https://doi.org/10.1007/978-3-0348-0909-2_11},
}

@article {AV:20,
    AUTHOR = {Agresti, Antonio and Veraar, Mark},
     TITLE = {Stability properties of stochastic maximal {$L^p$}-regularity},
   JOURNAL = {J. Math. Anal. Appl.},
  FJOURNAL = {Journal of Mathematical Analysis and Applications},
    VOLUME = {482},
      YEAR = {2020},
    NUMBER = {2},
      ISSN = {0022-247X},
   MRCLASS = {60H15 (35B35 35R60 60B11)},
  MRNUMBER = {4016507},
MRREVIEWER = {Jiang Lun Wu},
       DOI = {10.1016/j.jmaa.2019.123553},
       URL = {https://doi.org/10.1016/j.jmaa.2019.123553},
      NOTE = {Paper No. 123553},
}

@incollection {KW:04,
    AUTHOR = {Kunstmann, Peer C. and Weis, Lutz},
     TITLE = {Maximal {$L_p$}-regularity for parabolic equations, {F}ourier
              multiplier theorems and {$H^\infty$}-functional calculus},
 BOOKTITLE = {Functional analytic methods for evolution equations},
    SERIES = {Lecture Notes in Math.},
    VOLUME = {1855},
     PAGES = {65--311},
 PUBLISHER = {Springer, Berlin},
      YEAR = {2004},
   MRCLASS = {47D06 (34G10 35D10 35J55 35K20 35K90 42B20 47A60)},
  MRNUMBER = {2108959},
MRREVIEWER = {Xuan Thinh Duong},
       DOI = {10.1007/978-3-540-44653-8\_2},
       URL = {https://doi.org/10.1007/978-3-540-44653-8_2},
}

@article {AV:25,
    AUTHOR = {Agresti, Antonio and Veraar, Mark},
     TITLE = {Nonlinear {SPDE}s and maximal regularity: an extended survey},
   JOURNAL = {NoDEA Nonlinear Differential Equations Appl.},
  FJOURNAL = {NoDEA. Nonlinear Differential Equations and Applications},
    VOLUME = {32},
      YEAR = {2025},
    NUMBER = {6},
      ISSN = {1021-9722,1420-9004},
   MRCLASS = {60H15 (35K57 35K90 47D06 76M35)},
  MRNUMBER = {4952170},
       DOI = {10.1007/s00030-025-01090-2},
       URL = {https://doi.org/10.1007/s00030-025-01090-2},
      NOTE = {Paper No. 123},
}

@book {HvNVW:17,
    AUTHOR = {Hyt\"{o}nen, Tuomas and van Neerven, Jan and Veraar, Mark and
              Weis, Lutz},
     TITLE = {Analysis in {B}anach spaces. {V}ol. {II}. {P}robabilistic
              methods and operator theory},
    SERIES = {Ergebnisse der Mathematik und ihrer Grenzgebiete. 3. Folge. A
              Series of Modern Surveys in Mathematics [Results in
              Mathematics and Related Areas. 3rd Series. A Series of Modern
              Surveys in Mathematics]},
    VOLUME = {67},
 PUBLISHER = {Springer, Cham},
      YEAR = {2017},
     PAGES = {xxi+616},
      ISBN = {978-3-319-69807-6; 978-3-319-69808-3},
   MRCLASS = {46-02 (42B35 46E30 47-02 60B11 60H30)},
  MRNUMBER = {3752640},
MRREVIEWER = {Adam Os\polhk ekowski},
       DOI = {10.1007/978-3-319-69808-3},
       URL = {https://doi.org/10.1007/978-3-319-69808-3},
}

@article {AV:24,
    AUTHOR = {Agresti, Antonio and Veraar, Mark},
     TITLE = {Stochastic maximal {$L^p(L^q)$}-regularity for second order
              systems with periodic boundary conditions},
   JOURNAL = {Ann. Inst. Henri Poincar\'{e} Probab. Stat.},
  FJOURNAL = {Annales de l'Institut Henri Poincar\'{e} Probabilit\'{e}s et
              Statistiques},
    VOLUME = {60},
      YEAR = {2024},
    NUMBER = {1},
     PAGES = {413--430},
      ISSN = {0246-0203},
   MRCLASS = {60H15 (35B65 35K15 35R60 42B37 46F10 47D06)},
  MRNUMBER = {4718386},
MRREVIEWER = {Peter E. Kloeden},
       DOI = {10.1214/22-aihp1333},
       URL = {https://doi.org/10.1214/22-aihp1333},
}

@article {KC:22,
    AUTHOR = {Kuan, Jeffrey and \v{C}ani\'{c}, Sun\v{c}ica},
     TITLE = {A stochastically perturbed fluid-structure interaction problem
              modeled by a stochastic viscous wave equation},
   JOURNAL = {J. Differential Equations},
  FJOURNAL = {Journal of Differential Equations},
    VOLUME = {310},
      YEAR = {2022},
     PAGES = {45--98},
      ISSN = {0022-0396},
   MRCLASS = {76M35 (74F10)},
  MRNUMBER = {4352602},
MRREVIEWER = {Benedetta Ferrario},
       DOI = {10.1016/j.jde.2021.11.028},
       URL = {https://doi-org.libproxy.berkeley.edu/10.1016/j.jde.2021.11.028},
}

@article {KC:24,
    AUTHOR = {Kuan, Jeffrey and \v{C}ani\'{c}, Sun\v{c}ica},
     TITLE = {Well-posedness of solutions to stochastic fluid-structure
              interaction},
   JOURNAL = {J. Math. Fluid Mech.},
  FJOURNAL = {Journal of Mathematical Fluid Mechanics},
    VOLUME = {26},
      YEAR = {2024},
    NUMBER = {1},
      ISSN = {1422-6928},
   MRCLASS = {76M35 (74F10 76D03)},
  MRNUMBER = {4668054},
MRREVIEWER = {Chengfeng Sun},
       DOI = {10.1007/s00021-023-00839-y},
       URL = {https://doi-org.libproxy.berkeley.edu/10.1007/s00021-023-00839-y},
      NOTE = {Paper No. 4},
}

@article {Taw:24,
    AUTHOR = {Tawri, Krutika},
     TITLE = {A stochastic fluid-structure interaction problem with the
              {N}avier-slip boundary condition},
   JOURNAL = {SIAM J. Math. Anal.},
  FJOURNAL = {SIAM Journal on Mathematical Analysis},
    VOLUME = {56},
      YEAR = {2024},
    NUMBER = {6},
     PAGES = {7508--7544},
      ISSN = {0036-1410},
   MRCLASS = {60H15 (35A01 35D30 35Q30 35R60 74F10)},
  MRNUMBER = {4823178},
       DOI = {10.1137/24M164029X},
       URL = {https://doi-org.libproxy.berkeley.edu/10.1137/24M164029X},
}

@article {TC:25,
    AUTHOR = {Tawri, Krutika and \v{C}ani\'c, Sun\v{c}ica},
     TITLE = {Existence of martingale solutions to a nonlinearly coupled
              stochastic fluid-structure interaction problem},
   JOURNAL = {Comm. Partial Differential Equations},
  FJOURNAL = {Communications in Partial Differential Equations},
    VOLUME = {50},
      YEAR = {2025},
    NUMBER = {3},
     PAGES = {353--406},
      ISSN = {0360-5302,1532-4133},
   MRCLASS = {60H15 (35A01 35Q30 35R60 76)},
  MRNUMBER = {4870991},
       DOI = {10.1080/03605302.2025.2450375},
       URL = {https://doi.org/10.1080/03605302.2025.2450375},
}

@article {Taw:25,
    AUTHOR = {Tawri, Krutika},
     TITLE = {A 2{D} stochastic nonlinearly coupled fluid-structure
              interaction problem in compliant arteries with unrestricted
              structural displacement},
   JOURNAL = {J. Differential Equations},
  FJOURNAL = {Journal of Differential Equations},
    VOLUME = {431},
      YEAR = {2025},
      ISSN = {0022-0396,1090-2732},
   MRCLASS = {35R60 (35D30 35Q30 35R37 60H15 74F15)},
  MRNUMBER = {4882811},
MRREVIEWER = {Elena\ Alexanrovna\ Strelnikova},
       DOI = {10.1016/j.jde.2025.113243},
       URL = {https://doi.org/10.1016/j.jde.2025.113243},
      NOTE = {Paper No. 113243},
}

@article {BMM:24,
    AUTHOR = {Breit, Dominic and Mensah, Prince Romeo and Moyo, Thamsanqa
              Castern},
     TITLE = {Martingale solutions in stochastic fluid-structure
              interaction},
   JOURNAL = {J. Nonlinear Sci.},
  FJOURNAL = {Journal of Nonlinear Science},
    VOLUME = {34},
      YEAR = {2024},
    NUMBER = {2},
      ISSN = {0938-8974},
   MRCLASS = {76M35 (60H15 74F10 76D05 76D09)},
  MRNUMBER = {4702649},
       DOI = {10.1007/s00332-023-10012-4},
       URL = {https://doi-org.libproxy.berkeley.edu/10.1007/s00332-023-10012-4},
      NOTE = {Paper No. 34},
}

@book {Lun:95,
    AUTHOR = {Lunardi, Alessandra},
     TITLE = {Analytic semigroups and optimal regularity in parabolic
              problems},
    SERIES = {Modern Birkh\"{a}user Classics},
      NOTE = {[2013 reprint of the 1995 original] [MR1329547]},
 PUBLISHER = {Birkh\"{a}user/Springer Basel AG, Basel},
      YEAR = {1995},
     PAGES = {xviii+424},
      ISBN = {978-3-0348-0556-8; 978-3-0348-0557-5},
   MRCLASS = {47D06 (01A75 34G20 35Kxx 46M35 46N20 47N20 58D25)},
  MRNUMBER = {3012216},
}

@article {PV:19,
    AUTHOR = {Portal, Pierre and Veraar, Mark},
     TITLE = {Stochastic maximal regularity for rough time-dependent
              problems},
   JOURNAL = {Stoch. Partial Differ. Equ. Anal. Comput.},
  FJOURNAL = {Stochastic Partial Differential Equations. Analysis and
              Computations},
    VOLUME = {7},
      YEAR = {2019},
    NUMBER = {4},
     PAGES = {541--597},
      ISSN = {2194-0401},
   MRCLASS = {60H15 (35B65 35R60 42B37 47D06)},
  MRNUMBER = {4022283},
MRREVIEWER = {Markus Kunze},
       DOI = {10.1007/s40072-019-00134-w},
       URL = {https://doi.org/10.1007/s40072-019-00134-w},
}

@article {Wil:23,
    AUTHOR = {Wilke, Mathias},
     TITLE = {Linear and quasilinear evolution equations in the context of
              weighted {$L_p$}-spaces},
   JOURNAL = {Arch. Math. (Basel)},
  FJOURNAL = {Archiv der Mathematik},
    VOLUME = {121},
      YEAR = {2023},
    NUMBER = {5-6},
     PAGES = {625--642},
      ISSN = {0003-889X},
   MRCLASS = {35K90 (35B40 35K58 35K59 35Q35)},
  MRNUMBER = {4675117},
MRREVIEWER = {Satoshi Sasayama},
       DOI = {10.1007/s00013-023-01927-1},
       URL = {https://doi.org/10.1007/s00013-023-01927-1},
}

@article {LM:06,
    AUTHOR = {Lang, J. and M\'{e}ndez, O.},
     TITLE = {Potential techniques and regularity of boundary value problems
              in exterior non-smooth domains: regularity in exterior
              domains},
   JOURNAL = {Potential Anal.},
  FJOURNAL = {Potential Analysis. An International Journal Devoted to the
              Interactions between Potential Theory, Probability Theory,
              Geometry and Functional Analysis},
    VOLUME = {24},
      YEAR = {2006},
    NUMBER = {4},
     PAGES = {385--406},
      ISSN = {0926-2601},
   MRCLASS = {35Q30 (26A16 46E35)},
  MRNUMBER = {2224756},
MRREVIEWER = {Gudrun Th\"{a}ter},
       DOI = {10.1007/s11118-006-9008-2},
       URL = {https://doi.org/10.1007/s11118-006-9008-2},
}

@article {CGH:17,
    AUTHOR = {Cotter, C. J. and Gottwald, G. A. and Holm, D. D.},
     TITLE = {Stochastic partial differential fluid equations as a diffusive
              limit of deterministic {L}agrangian multi-time dynamics},
   JOURNAL = {Proc. A.},
  FJOURNAL = {Proceedings A},
    VOLUME = {473},
      YEAR = {2017},
    NUMBER = {2205},
      ISSN = {1364-5021},
   MRCLASS = {76M35},
  MRNUMBER = {3710332},
       DOI = {10.1098/rspa.2017.0388},
       URL = {https://doi.org/10.1098/rspa.2017.0388},
      NOTE = {Paper No. 20170388},
}

@article {FGP:10,
    AUTHOR = {Flandoli, F. and Gubinelli, M. and Priola, E.},
     TITLE = {Well-posedness of the transport equation by stochastic
              perturbation},
   JOURNAL = {Invent. Math.},
  FJOURNAL = {Inventiones Mathematicae},
    VOLUME = {180},
      YEAR = {2010},
    NUMBER = {1},
     PAGES = {1--53},
      ISSN = {0020-9910},
   MRCLASS = {35R60 (35A02 35Q35 60H30 76M35)},
  MRNUMBER = {2593276},
MRREVIEWER = {Nikolaos Halidias},
       DOI = {10.1007/s00222-009-0224-4},
       URL = {https://doi.org/10.1007/s00222-009-0224-4},
}

@book {DPZ:14,
    AUTHOR = {Da Prato, Giuseppe and Zabczyk, Jerzy},
     TITLE = {Stochastic equations in infinite dimensions},
    SERIES = {Encyclopedia of Mathematics and its Applications},
    VOLUME = {152},
   EDITION = {Second},
 PUBLISHER = {Cambridge University Press, Cambridge},
      YEAR = {2014},
     PAGES = {xviii+493},
      ISBN = {978-1-107-05584-1},
   MRCLASS = {60H15 (34F05 34Gxx)},
  MRNUMBER = {3236753},
MRREVIEWER = {David Nualart},
       DOI = {10.1017/CBO9781107295513},
       URL = {https://doi.org/10.1017/CBO9781107295513},
}

@book {FL:23,
    AUTHOR = {Flandoli, Franco and Luongo, Eliseo},
     TITLE = {Stochastic partial differential equations in fluid mechanics},
    SERIES = {Lecture Notes in Mathematics},
    VOLUME = {2330},
 PUBLISHER = {Springer, Singapore},
      YEAR = {[2023] \copyright 2023},
     PAGES = {viii+199},
      ISBN = {978-981-99-0387-0; 978-981-99-0385-6},
   MRCLASS = {76M35 (35Q35 35R60 60H15)},
  MRNUMBER = {4628185},
MRREVIEWER = {Luigi Amedeo Bianchi},
       DOI = {10.1007/978-981-99-0385-6},
       URL = {https://doi.org/10.1007/978-981-99-0385-6},
}

@article{WGM:10,
author = {Walter, J. and Gonzalez, O. and Maddocks, J. H.},
title = {On the Stochastic Modeling of Rigid Body Systems with Application to Polymer Dynamics},
journal = {Multiscale Modeling \& Simulation},
volume = {8},
number = {3},
pages = {1018-1053},
year = {2010},
doi = {10.1137/090765705},

URL = { 
    
        https://doi.org/10.1137/090765705
    
    

},
eprint = { 
    
        https://doi.org/10.1137/090765705
    
    

}
,
}

@article {AV:24b,
    AUTHOR = {Agresti, Antonio and Veraar, Mark},
     TITLE = {Stochastic {N}avier-{S}tokes equations for turbulent flows in
              critical spaces},
   JOURNAL = {Comm. Math. Phys.},
  FJOURNAL = {Communications in Mathematical Physics},
    VOLUME = {405},
      YEAR = {2024},
    NUMBER = {2},
      ISSN = {0010-3616},
   MRCLASS = {35Q30 (35D35 35R60 60H15 76F02 76M35)},
  MRNUMBER = {4703457},
MRREVIEWER = {Zijin Li},
       DOI = {10.1007/s00220-023-04867-7},
       URL = {https://doi.org/10.1007/s00220-023-04867-7},
      NOTE = {Paper No. 43},
}
\bibliographystyle{siam}

\end{document}